\newcommand{\pp}{\mathbb{P}}
\newcommand{\ee}{\mathbb{E}}
\newcommand{\tsp}{\mathcal{T}}
\newcommand{\rtsp}{\mathcal{T}^{\,*}}
\newcommand{\pe}{E^{\circ}}
\newcommand{\inte}{E^{\bullet}}
\newcommand{\pf}{A}
\newcommand{\ch}{B}
\newcommand{\var}{\mathbb{V}}
\newcommand{\cov}{Cov}
\newcommand{\corr}{\rho}
\newcommand{\YHK}{y}
\newcommand{\PDA}{u}
\newcommand{\epf}{\hfill $\square$}
\newcommand{\pyule}{\mathbb{P}_{\YHK}}
\newcommand{\puni}{\mathbb{P}_{\PDA}}
\newcommand{\pmf}{\sigma}
\newcommand{\ypmf}{\tau} 
\newcommand{\rev}[1]{#1}
\newcommand{\twu}[1]{#1}
\newcommand{\rf}[1]{( #1 )}
\begin{document}

\title{On cherry and pitchfork distributions of random rooted and unrooted phylogenetic trees
\thanks{K.P. Choi acknowledges the support of Singapore Ministry of Education Academic Research Fund [Grant number R-155-000-188-114], and A. Thompson is supported by the UKRI Biotechnology and Biological Sciences Research Council Norwich Research Park Biosciences Doctoral Training Partnership [Grant number BB/M011216/1]. }
}

\titlerunning{On cherry and pitchfork distributions of random trees}        

\author{Kwok Pui Choi        \and
        Ariadne Thompson \and Taoyang Wu
}


\institute{Kwok Pui Choi \at
             Department of Statistics and Applied Probability, and the Department of Mathematics, National University of Singapore, Singapore 117546 \\
              \email{stackp@nus.edu.sg}           
           \and
          Ariadne Thompson and Taoyang Wu \at
              School of Computing Sciences, University of East Anglia, Norwich, NR4 7TJ, U.K. \\
 \email{ariadne.thompson@uea.ac.uk, taoyang.wu@uea.ac.uk}
}

\date{Received: date / Accepted: date}

\maketitle

\begin{abstract}
Tree shape statistics are important for investigating evolutionary mechanisms mediating phylogenetic trees. As a step towards bridging shape statistics between rooted and unrooted trees, we present a comparison study on two subtree statistics known as numbers of cherries and pitchforks for the proportional to distinguishable arrangements (PDA) and  the Yule-Harding-Kingman (YHK) models. Based on recursive formulas on the joint distribution of the number of cherries and that of pitchforks, it is shown that cherry distributions are log-concave for both rooted and unrooted trees under these two models. \twu{Furthermore}, the mean number of cherries and that of pitchforks for unrooted trees converge respectively to those for rooted trees under the YHK model while there exists a limiting gap of $1/4$ for the PDA model. Finally, the total variation distances between the cherry distributions of rooted and those of unrooted trees converge for both models. Our results indicate that caution is required for conducting statistical analysis for tree shapes involving both rooted and unrooted trees.

\keywords{ tree shape $\cdot$ subtree distribution $\cdot$ Yule-Harding-Kingman model $\cdot$ PDA model $\cdot$ total variation distance
}

\end{abstract}

\section{Introduction}

As a common way of representing evolutionary relationships among  biological systems ranging from genes to populations, phylogenetic trees retain important signatures of the underlying evolutionary events and mechanisms which are often not directly observable, such as  speciation and expansion~\citep{mooers2007some,heath2008taxon}.
To utilise these signatures, one popular approach is to compare empirical tree shape indices with those predicted by neutral models specifying tree generating processes
~\citep[see, e.g.][]{blum2006random,colijn2014phylogenetic,hagen2015age}. Moreover, topological tree shapes are also closely related to a number of basic population genetic statistics~\citep{Ferretti2017,arbisser2018joint} and
\twu{are} useful for identifying the \rev{bias} of tree reconstruction methods~\citep{PickettRandle05,holton2014shape}.

Phylogenetic trees can be broadly grouped into two categories: rooted and unrooted.
A rooted tree \twu{contains} a specific node designated as the root, which gives a temporal direction to the tree that unrooted trees do not have.
Tree shapes for rooted trees are relatively better studied, partly due to  a general framework known as \twu{the} recursive shape index ~\citep{matsen2007optimization,chang2010limit,disanto2013exact,cardona2013exact}.
However, less is \rev{known} about properties of tree shapes for unrooted trees, which are  used extensively in phylogenetic analysis, particularly when the location of the root is too difficult to be inferred from data~\citep{steel2012root}.
As a step towards bridging this gap, we investigate the exact joint distribution of two subtree statistics known as cherries and pitchforks, e.g. subtrees of two and three leaves, \rev{respectively}, for unrooted trees under two commonly used tree generating models: the proportional to distinguishable arrangements (PDA) and    Yule-Harding-Kingman (YHK) models.
 Combining \twu{these results} with subtree distributions on rooted trees~\citep{McKenzie2000,rosenberg06a,WuChoi16}, we then conduct a comparison study on these subtree distributions between rooted and unrooted trees to gain further insights into these two models.

We now summarize the contents of the rest of the paper. In the next section, we begin by reviewing some definitions and results concerning phylogenetic trees and tree-generating models. These models are described using a random tree growth framework based on taxon attachment that can be applied to both rooted and unrooted trees. \twu{We then} study the cherry and the pitchfork distributions under the PDA model in Section~\ref{sec:results_pda} and those under the YHK model in Section~\ref{sec:results_yhk}. For each model, our starting point is a recursive formula on the joint distributions of cherries and pitchforks, which leads to a common approach to  computing the mean and variance for cherries and for pitchforks, as well as their covariance.

In Section~\ref{sec:comparison} we present comparison studies on properties of the cherry and pitchfork distributions under the two models for both rooted and unrooted cases. In particular, \rev{it is shown} that the cherry distributions under both models are log-concave (Theorem~\ref{them:log-concav}).  \twu{For both cherry distributions and pitchfork distributions, the difference between the mean number for unrooted trees and that for rooted trees converges to $0$ under the YHK model, whereas it converges to $1/4$ under the PDA model (Proposition~\ref{prop:mean:comp})}.
Finally, we show that the total variation distances between the cherry distributions of rooted and those of unrooted trees \twu{converge under the YHK model~(Proposition~\ref{prop:yule:tv}), and converge to $0$ under the PDA model
~(Theorem~\ref{thm:pda_cherry_dtv})}. We conclude in the last section with a discussion of our results and some open problems.


\section{Preliminaries} \label{sec:preliminaries}

In this section, we present some basic notation and background concerning phylogenetic trees, random tree models, and log-concavity. From now on $n$ will be a positive integer greater than three unless stated otherwise and $[n]=\{1,2,\dots,n\}$ is the set of all positive integers between 1 and $n$.

\subsection{Phylogenetic trees}
 A {\em tree} $T=\left( V(T),E(T) \right)$ is a connected acyclic graph with vertex set $V(T)$ and edge set $E(T)$. A vertex is referred to as a {\em leaf} if it has degree one, and an {\em interior vertex} otherwise.  An edge incident to a leaf is called a {\em pendant edge}, and let $\pe(T)$ be the set of pendant edges in $T$.
  A tree is {\em rooted} if it contains exactly one distinguished \twu{degree-one} node designated as the {\em root}, which is not regarded as a leaf and is usually denoted by $\rho$, and {\em unrooted} otherwise.  All trees consider here are {\em binary}, that is, each interior vertex has degree three.

 A (rooted) {\em phylogenetic tree} on a finite set $X$ is a (rooted)  tree with leaves bijectively labelled by the elements of $X$. The set of binary unrooted phylogenetic trees and rooted phylogenetic trees on $[n]$ are denoted by $\tsp_n$ and $\rtsp_n$, respectively.
 See Fig.~\ref{fig:tree_examples} for examples of trees in $\tsp_8$ and $\rtsp_8$.

Removing the root of a tree $T^*$ in $\rtsp_n$, that is, removing $\rho$ and suppressing its adjacent interior vertex $r$ (i.e. deleting $r$ and adding an edge connecting the remaining two neighbours of $r$), results in an unrooted tree $\rho^{-1}(T^*)$ in $\tsp_n$.
 For example,  \rev{in Fig.~\ref{fig:tree_examples} the unrooted} phylogenetic tree $T$  is obtained from the rooted \rev{tree $T^*$} by removing its root.

 \begin{figure}[ht]
\begin{center}
{\includegraphics[scale=0.8]{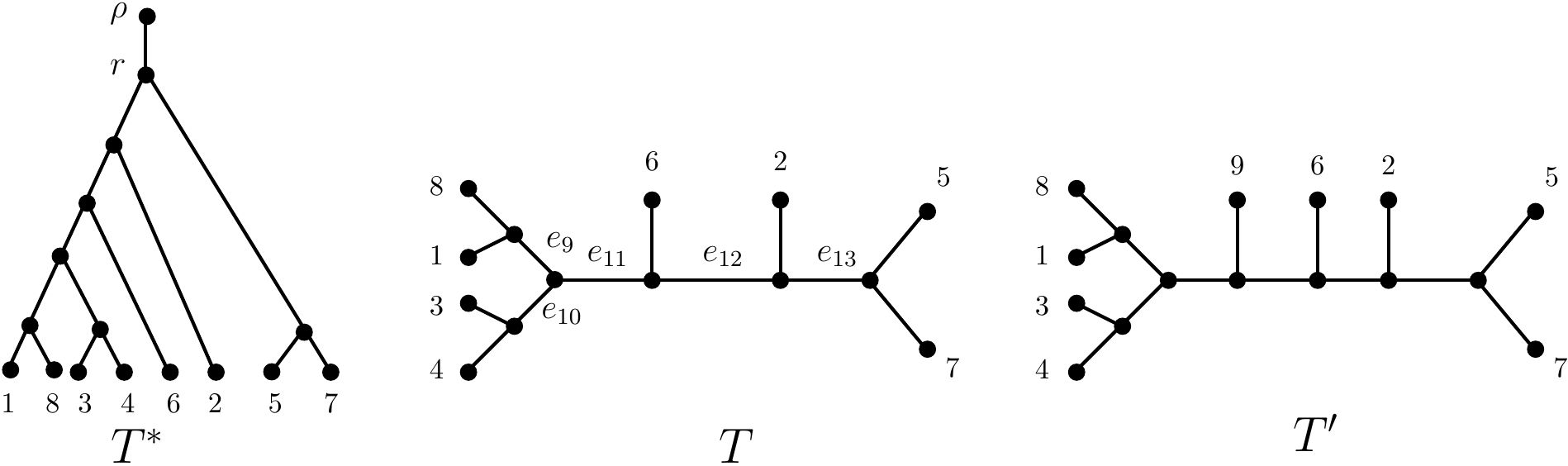}}
\end{center}
\caption{Examples of phylogenetic trees.  $T^*$ is a rooted phylogenetic tree on $X=\{1,\dots,8\}$; $T$ is an unrooted phylogenetic tree on $X$ (where the pendant edge incident to $i$ is $e_i$); $T'=T[e_{11}]$ is a phylogenetic tree on $\{1,\dots,9\}$ obtained from $T$ by attaching a new leaf labelled $9$ to edge $e_{11}$.
}
\label{fig:tree_examples}
\end{figure}

Removing an edge in a phylogenetic tree $T$ results in two connected components; such a connected component is referred to as a subtree of $T$ if it does not contain the root of $T$. In other words, removing an edge in a phylogenetic tree $T$ results in two subtrees if $T$ is unrooted, and one subtree if $T$ is rooted. A subtree is called a {\em cherry} if it has two leaves, and a {\em pitchfork} if it has three leaves.  Given a phylogenetic tree $T$, let $\pf(T)$ and $\ch(T)$ be the number of  pitchforks and cherries, \twu{respectively}, contained in $T$.

Given an edge $e$ in a phylogenetic tree $T$ on $X$ and a taxon $x' \not \in X$, let $T[e;x']$ be the phylogenetic tree on $X\cup \{x'\}$ obtained by attaching a new leaf with label $x'$ to the edge $e$. Formally, let $e=\{u,v\}$ and let $w$ be a vertex not contained in $V(T)$. Then $T[e;x']$ has vertex set $V(T) \cup \{x',w\}$ and edge set $\big(E(T) \setminus \{e\} \big) \cup \{(u,w), (v,w), (w,x')\}$. See Fig.~\ref{fig:tree_examples} for an illustration of this construction, \rev{where tree $T'=T[e_{11};9]$ is obtained from $T$ by attaching leaf $9$ to edge $e_{11}$}. Note that we also use $T[e]$ instead of $T[e;x']$ when the taxon name $x'$ is not essential.

\subsection{The YHK and the PDA models}
\label{subsection:model}
Let $\tsp_n$ be the set of unrooted phylogenetic trees with $n$ leaves.
In this subsection, we present a formal definition of the two null models investigated in this paper:  the  proportional to distinguishable arrangements (PDA) model and the
Yule-Harding-Kingman (YHK) model.  Although these two models are typically presented through the rooted version, for our purpose here we follow the direct approach based on Markov processes as described by~\citet[Section 3.3.3]{steel2016phylogeny}.

Under the YHK process~\citep{harding71a, yule25a}, a  random unrooted phylogenetic tree $T_n$ is generated as follows.
\begin{itemize}
\item[(i)] Select a uniform random permutation $(x_1,\dots,x_n)$ of $[n]$;
\item[(ii)] start with the unrooted phylogenetic tree $T_2$ on the taxon set $\{x_1,x_2\}$;
\item[(iii)] for $2\le k <n$, uniformly choose a random pendant edge $e$ in $T_k$ and let
$T_{k+1}=T_k[e;x_{k+1}]$.
\end{itemize}
Here a permutation $(x_1,\dots,x_n)$ of $[n]$ means a taxon sequence with $x_i \in [n]$ and $x_i\not =x_j$ for all $i\not =j$.
A similar process can be used to sample a rooted tree under the YHK model by using the rooted phylogenetic tree $T_2$ on $\{x_1,x_2\}$ in Step (ii).
Moreover, the PDA process can be described using a similar scheme; the only difference is that in Step (iii) the edge $e$ is uniformly sampled from the edge set of $T_k$, instead of the pendant edge set\footnote{Under the PDA process, Step (i) can also be simplified by using a fixed permutation, say $(1,2,\cdots,n)$.}.

The probability of generating a given unrooted tree $T$ under the YHK model (respectively the PDA model) is denoted by $\pyule(T)$ (resp. $\pp _{\PDA}(T)$).
We use $\ee _{\YHK}$, $\var _{\YHK}$, $\cov _{\YHK}$, and $\corr _{\YHK}$ to  denote respectively  the  expectation, variance, covariance, and correlation taken with respect to the probability measure $\pp _{\YHK}$ under the YHK model. Similarly, $\ee _{\PDA}$, $\var _{\PDA}$, $\cov _{\PDA}$, and $\corr _{\PDA}$ are defined with respect to the probability
measure $\pp _{\PDA}$ under the PDA model.

\rev{For $n\ge 4$, let $\ch_n$ be the random variable $\ch(T)$ for a random tree $T$ in $\tsp_n$. Similarly, for $n\ge 6$,   let $\pf_n$ be the random variable $\pf(T)$ for a random tree $T$ in $\tsp_n$.  The probability distributions of $\pf_n$ (resp. $\ch_n$) will be referred to as pitchfork distributions (resp. cherry distributions).
Since each tree in $\tsp_n$ with $n=4,5$ contains precisely two cherries, the following statement clearly holds.
\begin{equation}
\label{eq:ch:four:five}
~~\mbox{For $n=4,5$, we have}~\quad
\pyule(\ch_n=b)
=\pp_{\PDA}(\ch_n=b)=\begin{cases}
1, & \text{if $b=2$}, \\
0, & \text{otherwise}.
\end{cases}
\end{equation}
}
\rev{When $n=6$, we have
\begin{equation}
\label{eq:six:distribution}
~\quad
\pyule(\pf_6=a,\ch_6=b)
=\begin{cases}
\frac{4}{5}, & \text{if $a=2,b=2$}, \\
\frac{1}{5}, & \text{if $a=0,b=3$}, \\
0, & \text{otherwise},
\end{cases}
\quad\mbox{and}\quad
\pp_{\PDA}(\pf_6=a,\ch_6=b)
=\begin{cases}
\frac{6}{7}, & \text{if $a=2,b=2$}, \\
\frac{1}{7}, & \text{if $a=0,b=3$}, \\
0, & \text{otherwise}.
\end{cases}
\end{equation}
}

\noindent In  this paper, we are interested in the joint  and marginal distributional properties of $\pf_n$ and $\ch_n$ under the YHK and the PDA models \rev{for $n\ge 6$}.


\subsection{Edge decomposition}
The edge set $E(T)$ of a phylogenetic tree $T$ can be decomposed into two sets: the pendant edge set $\pe(T)$ and the interior edge set $\inte(T)$. That is, we have
$$
E(T)= \pe(T) \sqcup \inte(T),
$$
where $\sqcup$ denotes the union of disjoint sets.  A cherry is called {\em essential} if it is not contained in a pitchfork.  Let $\inte_{ec}(T)$ be the set of interior edges whose removal induces an essential cherry \rev{as one of the two subtrees}, and denote its complement in $\inte(T)$ by $\inte_{nec}(T)$. Then we have
$$
\inte(T)= \inte_{ec}(T) \sqcup \inte_{nec}(T).
$$
For \rev{pendant edges}, let $\pe_{ec}(T)$ be the set of pendant edges contained in essential cherries, let $\pe_{pf}(T)$ be the set of pendant edges contained in a pitchfork but not a cherry, let $\pe_{cp}$ be the set of pendant edges contained both in a cherry and in a pitchfork, let $\pe_{ind}$ be the set of pendant edges contained in neither a cherry nor a pitchfork.
To illustrate the above notation, considering the tree $T$ depicted  in Fig.~\ref{fig:tree_examples}, we have $\pe_{ec}(T)=\{e_1,e_3,e_4,e_8\}$, $\pe_{pf}(T)=\{e_2\}$, $\pe_{cp}(T)=\{e_5,e_7\}$,
$\pe_{ind}(T)=\{e_6\}$, $\inte_{ec}=\{e_9,e_{10}\}$, and $\inte_{nec}(T)=\{e_{11},e_{12},e_{13}\}$.

\rev{
For a tree $T$ in $\tsp_n$ with $n\ge 6$, we have
\begin{align}
	  \pe(T) &= {} \pe_{ec}(T) \sqcup \pe_{pf}(T) \sqcup \pe_{cp}(T) \sqcup \pe_{ind}(T)~~\mbox{and}~~ \label{eq:edge:dec} \\
   E(T) &={} \inte_{ec}(T) \sqcup \inte_{nec}(T) \sqcup \pe_{ec}(T) \sqcup \pe_{pf}(T) \sqcup \pe_{cp}(T) \sqcup \pe_{ind}(T).
\end{align}
Furthermore,  it is easy to see that $|E(T)|=2n-3$, $|\pe_{pf}(T)|=A(T)$, and $|\pe_{cp}(T)|=2A(T)$.} Since each pitchfork contains precisely one non-essential cherry, it follows that $|\pe_{ec}(T)|=2(B(T)-A(T))$ and $|\inte_{ec}(T)|=B(T)-A(T)$.  Finally, we have $|\pe_{ind}(T)|=n-A(T)-2B(T)$ and
$|\inte_{nec}(T)|=n-3+A(T)-B(T)$ because $T$ has precisely $n$ pendant edges and the number of interior edges in it is $n-3$.

The following lemma, whose proof is straightforward and hence omitted here, relates the values $\ch(T[e])-\ch(T)$ and $\pf(T[e])-\pf(T)$ to the choice of $e$.

\begin{lemma} \label{prop:yhk_growth_cases}
Suppose that $e$ is an edge in a phylogenetic tree $T$ and $T'=T[e]$. Then we have
\small{
\begin{equation*}
\pf(T') = \begin{cases}
             \pf(T)-1  & \text{if } e\in \pe_{pf}(T),\\
             \pf(T)+1 & \text{if } e\in \pe_{ec}(T) \cup \inte_{ec}(T), \\
              \pf(T)  & \text{otherwise};
       	  \end{cases}
      	 \quad \mbox{and} \quad
\ch(T') = \begin{cases}
             \ch(T)+1 & \text{if } e \in \pe_{pf}(T)\cup \pe_{ind}(T), \\
             {} & \\
             \ch(T) & \text{otherwise }.
       	  \end{cases}
\end{equation*}
}
\end{lemma}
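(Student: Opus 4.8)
The plan is to read off both changes from the purely local modification that defines $T'=T[e]$: we subdivide $e$ by a new vertex $w$ and attach the new leaf $x'$ to $w$. Since every vertex and edge away from $e$ is left intact, the only cherries or pitchforks that can appear or disappear are those incident to $w$ or $x'$, together with any pitchfork having $e$ among its own edges. The statement therefore reduces to inspecting the neighbourhood of $e$ according to the block of the edge decomposition in which it lies.

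I would settle the cherry count first, as it is the cleaner half. If $e$ is interior, then both neighbours of $w$ other than $x'$ are interior vertices, so $x'$ joins no cherry and no old leaf changes its neighbour; hence $\ch(T')=\ch(T)$, which covers $\inte_{ec}$ and $\inte_{nec}$. If $e=e_x$ is pendant with leaf $x$, then $\{x,x'\}$ is a brand-new cherry at $w$, while the former neighbour of $x$ carries a second leaf precisely when $e_x\in\pe_{ec}\cup\pe_{cp}$; in that case the cherry through $x$ is destroyed and the net change is $0$, whereas on $\pe_{pf}\cup\pe_{ind}$ it is $+1$. This is exactly the claimed dichotomy.

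For the pitchfork count I would tally creations and destructions separately, using two observations: a created pitchfork must contain $x'$, and a destroyed pitchfork must have $e$ as one of the four edges of the corresponding three-leaf subtree. When $e=e_x$ is pendant, $x'$ lies in the cherry $\{x,x'\}$, so a pitchfork is created iff the former neighbour of $x$ has a leaf-neighbour (i.e. $e_x\in\pe_{ec}\cup\pe_{cp}$) and one is destroyed iff $x$ already lay in a pitchfork (i.e. $e_x\in\pe_{pf}\cup\pe_{cp}$). When $e$ is interior, $x'$ is a lone leaf, so a pitchfork is created for each endpoint of $e$ that parents a cherry, and one is destroyed exactly when $e$ is the interior edge of an existing pitchfork, i.e. when some endpoint parents a non-essential cherry. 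Reading these tallies off block by block yields $+1$ on $\pe_{ec}\cup\inte_{ec}$, $-1$ on $\pe_{pf}$, and $0$ on every other block.

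The step I expect to demand the most care is justifying that the nonzero changes are exactly $\pm 1$, which is where uniqueness of the created and destroyed pitchforks matters. A leaf can be the lone leaf of two distinct pitchforks only when it lies between two cherries, and an interior edge can parent a cherry at both ends only in the two-cherry tree; each such configuration saturates all vertex degrees and hence forces $n\le 5$. So the clean $\pm 1$ accounting holds precisely once $n\ge 6$, the regime in which the pitchfork distribution is studied; for $n\in\{4,5\}$ the transitions genuinely differ, so this size hypothesis is essential rather than cosmetic. With those small-tree degeneracies set aside, the one remaining substantive point is to separate interior edges that parent an \emph{essential} cherry (a pitchfork is created with nothing to cancel it, net $+1$) from those parenting a non-essential one (the created pitchfork is exactly offset by the destroyed one, net $0$); the rest is routine local verification.
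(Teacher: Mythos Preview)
Your case-by-case local analysis is correct and is exactly the argument the authors have in mind; the paper itself omits the proof entirely, deeming it ``straightforward and hence omitted here'', so there is no alternative approach to compare against. Your caveat about the hypothesis $n\ge 6$ is well taken and in fact sharpens the paper's presentation: the edge-decomposition classes $\pe_{pf},\pe_{ec},\dots$ and the counting identities such as $|\pe_{pf}(T)|=A(T)$ are stated in the paper only for $n\ge 6$, and the lemma is invoked solely inside proofs (e.g.\ of the joint recursions) that already assume $n\ge 6$, so the small-tree degeneracies you identify do not propagate.
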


\subsection{Miscellaneous}

A sequence $(y_1, \dots, y_m)$ of numbers is called {\em positive} if each number in the sequence is positive, that is, greater than zero. It is defined to be {\em log-concave} if $y_{k-1} y_{k+1} \leq y_k^2$ holds for $2 \leq k \leq m-1$.  Clearly, a positive sequence $(y_k)_{1 \leq k \leq m}$ is log-concave if and only if the sequence $(y_{k} / y_{k+1})_{1 \leq k \leq m-1}$ is increasing. Therefore, a log-concave sequence is necessarily {\em unimodal}; that is, there exists an index $1\leq k \leq m$ such that
\begin{equation*}
	y_1\leq y_2 \leq \dots \leq y_k \geq y_{k+1} \geq \cdots \geq y_m
\end{equation*}

\noindent
holds. Moreover, a non-negative integer-valued random variable $Y$ with probability mass function $\{p_k: k\geq 0\}$  is log-concave if $\{p_k\}_{k\geq 0}$ is a log-concave sequence.

Next, for each positive integer $n$, we let $\Delta(n)=\lfloor n/4 \rfloor$ be the largest integer that is less than or equal to $n/4$. Then we have
\begin{equation}
\label{eq:delta}
\left \lfloor \frac{ n(n - 1) }{ 2(2n - 3) } \right \rfloor =\Delta(n)~~\mbox{for $n\ge 4$.}
\end{equation}
To see \twu{that} the last equation holds, let $r$ be the integer between $0$ and $3$ such that $n=4m+r$ holds for some $m\ge 1$. When $r=0$, that is, $n=4m$ for some $m\ge 1$, it is straightforward to verify that  $m<4m(4m-1)/(16m-6)<m+1$, from which~(\ref{eq:delta}) follows. The other three cases, where $r\in \{1,2,3\}$, can be verified in a similar manner. \rev{Furthermore}, let $\nabla(n)=\lceil n/4 \rceil$ be the smallest integer that is greater than or equal to $n/4$. Then a proof similar to that of~(\ref{eq:delta}) shows that
\begin{equation}
\label{eq:nabla}
\left \lfloor \frac{(n + 1) (n + 2)}{2 (2n - 1)}  \right \rfloor =\nabla(n)~~\mbox{for $n>8$}.
\end{equation}

We end this section with \twu{the following} fact on the rising factorial.   Let $k^{\rf{r}}=k(k+1)\cdots (k+r-1)$ for positive \rev{integers} $k,r$. \twu{Then we have}  the following identity~\citep[see, e.g.][]{RKP94}:
\begin{equation} \label{eq:telescopic_sum}
	\sum_{k=1}^m k^{\rf{r}} = \frac{m^{\rf{r+1}}}{r+1}.
\end{equation}
When $r = 1$, this gives us the following well-known formula for the triangular numbers:
\begin{equation} \label{eq:triangular_numbers}
	1 + 2 + \cdots + m = \frac{m^{\rf{2}}}{2} = \frac{m(m+1)}{2}.
\end{equation}


\section{Subtree distributions under the PDA model}
 \label{sec:results_pda}

\rev{In this section, we study the joint distribution of the random variables $A_n$ (i.e. the number of pitchforks) and $B_n$ (i.e., the number of cherries) under the PDA model. }

\rev{Our starting point is the following result on a} recursion of the joint distribution, which will then be used to deduce the marginal distribution of $B_n$, as well as the joint moments of $A_n$ and $B_n$.

\begin{theorem}
\label{thm:PDA:pro:rec}
\rev{Let $n\ge 6$. Then we have $\puni(\pf_{n}=a, \ch_{n}=b)=0$ if either $a\in \{-1,n,n+1\}$ or $b\in \{1,n\}$ holds. Furthermore, for $0\le a \le n$ and $1< b \le n$ we have}
\begin{align}
\puni&(\pf_{n+1}=a,\ch_{n+1}=b)
=\frac{n+3a-b-3}{2n-3}\puni(\pf_{n}=a,\ch_n=b)+\frac{a+1}{2n-3}\puni(\pf_{n}=a+1, \ch_n =b-1)  \nonumber \\
&\quad +\frac{3(b-a+1)}{2n-3}\puni(\pf_{n}=a-1, \ch_n=b)+\frac{n-a-2b+2}{2n-3}\puni(\pf_{n}=a, \ch_n=b-1).  \label{eq:rec:pda}
\end{align}
\end{theorem}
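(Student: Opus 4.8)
The plan is to exhibit $(\pf_n,\ch_n)$ as a Markov chain and to derive (\ref{eq:rec:pda}) by conditioning on the tree at the previous step. Under the PDA process, $T_{n+1}=T_n[e]$ for an edge $e$ chosen uniformly among the $|E(T_n)|=2n-3$ edges of $T_n$. By Lemma~\ref{prop:yhk_growth_cases}, the increment $\big(\pf(T_n[e])-\pf(T_n),\, \ch(T_n[e])-\ch(T_n)\big)$ depends only on which of the six classes of the edge decomposition contains $e$; reading off the two case analyses of the lemma simultaneously, the joint increment equals $(+1,0)$ when $e\in\pe_{ec}(T_n)\cup\inte_{ec}(T_n)$, equals $(-1,+1)$ when $e\in\pe_{pf}(T_n)$, equals $(0,+1)$ when $e\in\pe_{ind}(T_n)$, and equals $(0,0)$ when $e\in\pe_{cp}(T_n)\cup\inte_{nec}(T_n)$.

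The decisive point is that, as computed in Section~\ref{sec:preliminaries}, the sizes of all six classes are determined by $n$ and by $(\pf(T_n),\ch(T_n))$ alone. Writing $a'=\pf(T_n)$ and $b'=\ch(T_n)$, the numbers of edges realising the four joint increments above are $3(b'-a')$, $a'$, $n-a'-2b'$, and $n-3+3a'-b'$ respectively, and these sum to $2n-3$. Hence, after dividing by $2n-3$, each conditional increment probability is a function of $(a',b',n)$ only, so $(\pf_n,\ch_n)$ is a Markov chain. I would then apply the law of total probability, summing over the four predecessor states at level $n$ that can reach $(a,b)$: the state $(a-1,b)$ via the increment $(+1,0)$, the state $(a+1,b-1)$ via $(-1,+1)$, the state $(a,b-1)$ via $(0,+1)$, and the state $(a,b)$ via $(0,0)$. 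Substituting $(a',b')$ equal to each predecessor into the corresponding class size produces exactly the coefficients $\tfrac{3(b-a+1)}{2n-3}$, $\tfrac{a+1}{2n-3}$, $\tfrac{n-a-2b+2}{2n-3}$, and $\tfrac{n+3a-b-3}{2n-3}$, which is (\ref{eq:rec:pda}).

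The degenerate cases follow from elementary range bounds. Nonnegativity of the edge-class sizes forces $0\le\pf_n\le\ch_n$ and $\pf_n+2\ch_n\le n$, whence $\pf_n\le n/3<n$ and $\ch_n\le n/2<n$; this rules out $a\in\{-1,n,n+1\}$ and $b=n$. That $b=1$ is impossible is the standard fact that every tree in $\tsp_n$ with $n\ge 4$ has at least two cherries, already recorded for $n=4,5$ in (\ref{eq:ch:four:five}). These vanishing identities are precisely what keeps (\ref{eq:rec:pda}) valid at the edge of the stated range $0\le a\le n$, $1<b\le n$: when $a=0$ the predecessor term with $\pf_n=-1$ drops out, and when $b=2$ the two predecessor terms with $\ch_n=1$ drop out.

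The step I expect to require the most care is the bookkeeping in the first two paragraphs: one must fuse the separate descriptions of the pitchfork and cherry increments from Lemma~\ref{prop:yhk_growth_cases} into the correct joint increments, and then check that each class size, after the reindexing forced by the predecessor state, reproduces the stated coefficient verbatim rather than an off-by-one variant. This is routine but error-prone, and the cleanest safeguard is to verify that, for each predecessor state $(a',b')$, the four relevant class sizes sum to $2n-3$, confirming that the joint increments partition the edge set of $T_n$.
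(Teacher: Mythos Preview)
Your proposal is correct and follows essentially the same approach as the paper: both arguments condition on the previous state, invoke Lemma~\ref{prop:yhk_growth_cases} together with the edge-class cardinalities from Section~\ref{sec:preliminaries} to compute the one-step transition probabilities, and then read off the four predecessor contributions via the law of total probability. Your framing in terms of joint increments and the explicit Markov-chain observation is a mild repackaging, but the substance, the key lemma, and the coefficient computations are identical to the paper's proof.
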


\begin{proof}
\rev{Fix an integer $n\ge 6$. Since $0\le \pf(T)\le n/3$ and $1<\ch(T)\le n/2$ holds for every tree $T\in \tsp_n$, it follows that $\puni(\pf_{n}=a, \ch_{n}=b)=0$ if either $a\in \{-1,n,n+1\}$ or $b\in \{1,n\}$ holds.}

Next, let $T_2, \dots, T_n, T_{n+1}$ be a sequence of random trees generated by the PDA process. That is, \rev{choosing a random permutation $(x_1,\dots,x_{n+1})$ of $[n+1]$ and considering the tree $T_2$ with two leaves $\{x_1,x_2\}$,  then} $T_{i+1} = T_i[e_i;x_{i+1}]$ where $e_i$ is  a uniformly chosen edge in $T_i$ for $2\le i \le n$. In particular, we have $|E(T_i)| = 2i-3$  for $2 \le i \le  n+1$. \rev{For $0\le a \le n$ and $1< b \le n$} put $\Delta(a,b)=\{(a,b),(a+1,b-1),(a-1,b),(a,b-1)\}$. Then we have
\begin{align}
&\puni(A_{n+1} = a, B_{n+1} = b) = \pp (A(T_{n+1}) = a, B(T_{n+1}) = b) \nonumber \\
&\,\, = \sum_{p,q} \pp(A(T_{n+1}) = a,B(T_{n+1}) = b | A(T_n) = p, B(T_n) = q)\pp(A(T_n) = p, B(T_n) = q) \nonumber \\
&\,\, = \sum_{p,q} \pp(A(T_{n+1}) = a,B(T_{n+1}) = b | A(T_n) = p, B(T_n) = q)\puni(A_n = p,B_n = q) \nonumber \\
&\,\,= \sum_{(p,q)\in \Delta(a,b)} \pp(A(T_{n+1}) = a,B(T_{n+1}) = b | A(T_n) = p, B(T_n) = q)\puni(A_n = p,B_n = q). \label{eq:thm:puni}
\end{align}
Here the first and third equalities follow \rev{from} the definition of random variables $A_n$ and
$B_n$ while the second one \rev{follows} from the law of total probability. The last equality holds because by Lemma~\ref{prop:yhk_growth_cases} and $T_{n+1}=T_{n}[e_n;x_{n+1}]$ we have
$$
 \pp(A(T_{n+1}) = a,B(T_{n+1}) = b | A(T_n) = p, B(T_n) = q) = 0
 $$
 for $(p,q)\not \in \Delta(a,b)$.

It remains to consider the cases with $(p,q)\in \Delta(a,b)$. The first case is that $(p,q)=(a,b)$. By Lemma~\ref{prop:yhk_growth_cases}, we have
\begin{align}
\label{eq:case1}
 \pp(A(T_{n+1}) = a,B(T_{n+1}) = b | A(T_n) = a, B(T_n) = b) = \frac{|\inte_{nec}(T_n)\cup \pe_{cp}(T_n)|}{E(T_n)} =\frac{n+3a-b-3}{2n-3}.
\end{align}
 Similarly, we have
  \begin{align}
&   \pp(A(T_{n+1}) = a,B(T_{n+1}) = b | A(T_n) = a+1, B(T_n) = b-1) = \frac{|\pe_{pf}(T_n)|}{E(T_n)} =\frac{a+1}{2n-3}, \label{eq:case2}\\
&      \pp(A(T_{n+1}) = a,B(T_{n+1}) = b | A(T_n) = a-1, B(T_n) = b) = \frac{|\pe_{ec}(T_n)\cup \inte_{ec}(T_n)|}{E(T_n)} =\frac{3(b-a+1)}{2n-3}, \label{eq:case3} \\
&         \pp(A(T_{n+1}) = a,B(T_{n+1}) = b | A(T_n) = a, B(T_n) = b-1) = \frac{|\pe_{ind}(T_n)|}{E(T_n)} =\frac{n-a-2b+2}{2n-3}. \label{eq:case4}
  \end{align}

Now the theorem follows from substituting~(\ref{eq:case1})-(\ref{eq:case4})
\rev{into}~(\ref{eq:thm:puni}).
\epf
\end{proof}

Similar to the dynamic programming approach outlined in~\citet[p.16]{WuChoi16},  we can use \rev{the initial condition in~\eqref{eq:six:distribution} and the} recursion in \rev{Theorem~\ref{thm:PDA:pro:rec}} to compute the joint distribution of $\pf_n$ and $\ch_n$ under the PDA model in $O(n^3)$. \rev{See Fig.~\ref{fig:probability_contours} for the contour plots of the probability density functions for the joint distribution of the numbers of cherries and pitchforks on unrooted phylogenetic trees with 50 and 200 leaves.
}

\begin{figure}
	\centering
    \begin{minipage}{.49\textwidth}
    	\centering
    	\includegraphics[height=0.6\linewidth]{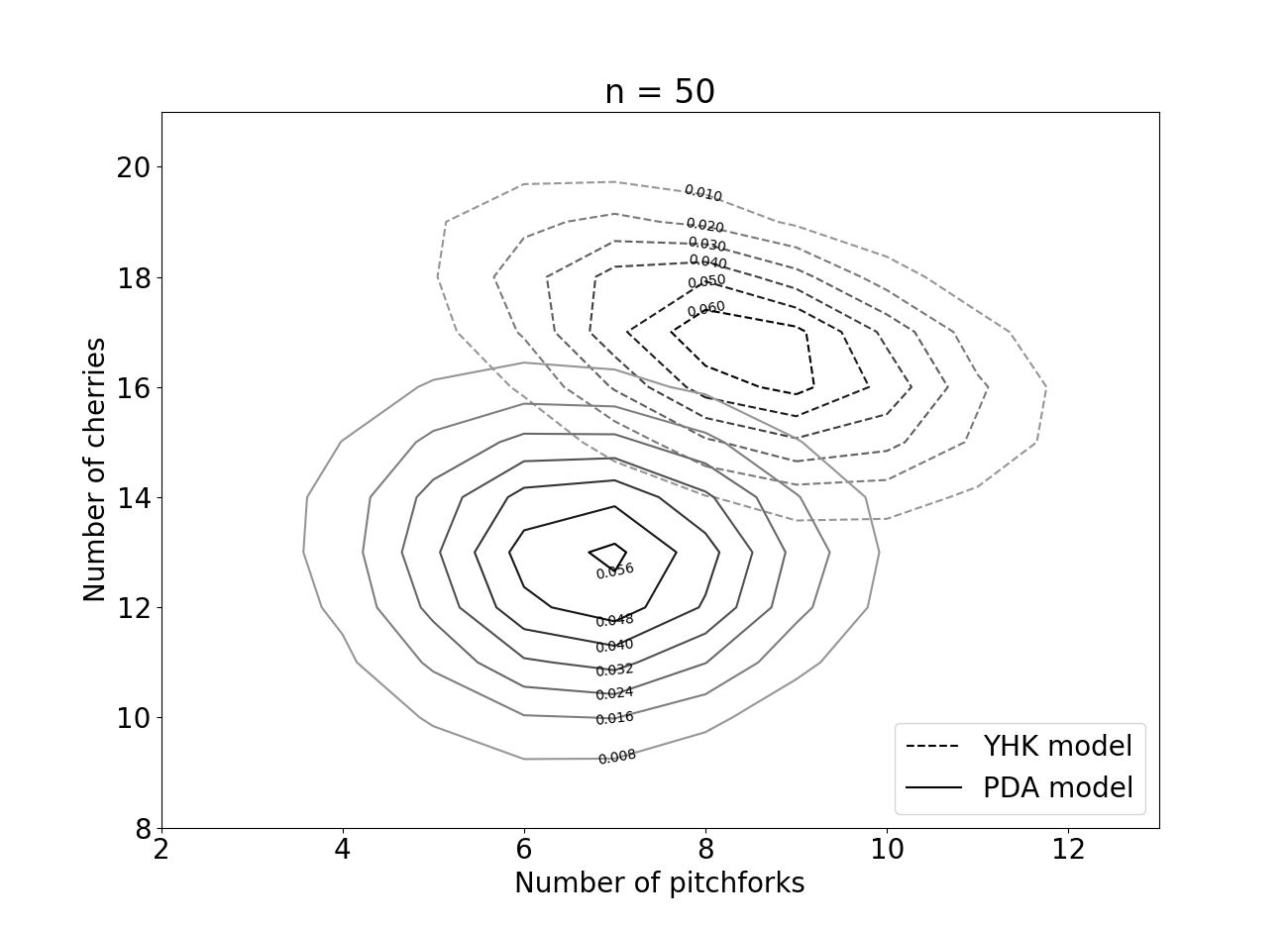}
    \end{minipage}
    \begin{minipage}{.49\textwidth}
    	\centering
    	\includegraphics[height=0.6\linewidth]{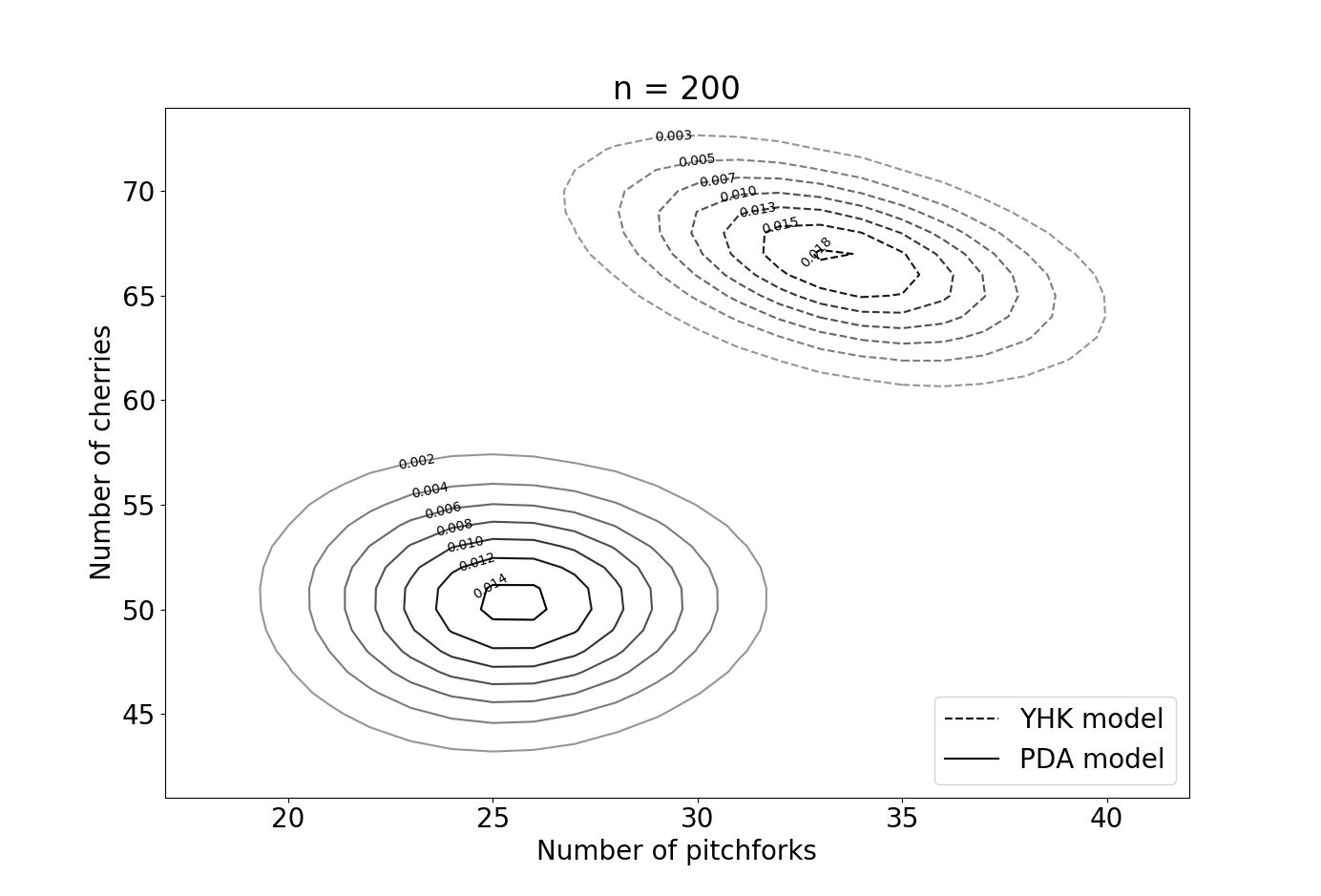}
    \end{minipage}
    \caption{Contour plots of the probability density functions for the joint distribution of the numbers of cherries and pitchforks on unrooted phylogenetic trees under the PDA model (solid lines) and the YHK model (dashed lines) with 50 leaves (left) and 200 leaves (right). The polygonal contours arise because the joint distribution is defined only on integer lattice points.
    }
    \label{fig:probability_contours}
\end{figure}

To study the moments of $\pf_n$ and $\ch_n$, we  present below a functional recursion form of Theorem~\ref{thm:PDA:pro:rec}.

\begin{theorem} \label{thm:pda_arbitrary_function}
Let $\varphi: \mathbb{N}  \times \mathbb{N}  \to \mathbb{R} $ be an arbitrary function. For $n \ge 6$ we have
\begin{align}
 \ee _{\PDA} \varphi(\pf_{n+1}, \ch_{n+1}) & = \frac{1}{2n-3} \ee _{\PDA}[(n+3\pf_{n}-\ch_n-3) \: \varphi (\pf_{n}, \ch_{n})] + \frac{1}{2n-3} \ee _{\PDA}[\pf_{n} \: \varphi (\pf_{n}-1, \ch_{n}+1)] \nonumber  \\
& + \frac{3}{2n-3} \ee _{\PDA}[ (\ch_{n}-\pf_n) \: \varphi (\pf_{n}+1, \ch_{n})]  + \frac{1}{2n-3}  \ee _{\PDA}[(n-\pf_{n}-2\ch_{n}) \: \varphi (\pf_{n}, \ch_{n}+1)].
\end{align}
\end{theorem}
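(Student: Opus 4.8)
The plan is to start from the definition of expectation and feed in the pointwise recursion from Theorem~\ref{thm:PDA:pro:rec} term by term. Writing $\ee_{\PDA}\varphi(\pf_{n+1}, \ch_{n+1}) = \sum_{a,b} \varphi(a,b)\,\puni(\pf_{n+1}=a, \ch_{n+1}=b)$, where the sum ranges over all $(a,b) \in \mathbb{N}\times\mathbb{N}$, I would substitute the four-term expression for $\puni(\pf_{n+1}=a, \ch_{n+1}=b)$ given in~\eqref{eq:rec:pda}. This splits the sum into four pieces, each a weighted sum of level-$n$ probabilities paired with $\varphi(a,b)$.

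The key step is then to reindex each of the three off-diagonal pieces so that the level-$n$ probability is evaluated at the running index. For the piece carrying $\puni(\pf_n = a+1, \ch_n=b-1)$ I would set $(p,q) = (a+1, b-1)$, so that $\varphi(a,b) = \varphi(p-1, q+1)$ and the weight $a+1$ becomes $p$; this yields $\frac{1}{2n-3}\ee_{\PDA}[\pf_n \,\varphi(\pf_n-1, \ch_n+1)]$. Similarly, the substitution $(p,q)=(a-1,b)$ turns the weight $3(b-a+1)$ into $3(q-p)$ and produces $\frac{3}{2n-3}\ee_{\PDA}[(\ch_n - \pf_n)\,\varphi(\pf_n+1, \ch_n)]$, while $(p,q)=(a,b-1)$ turns $n-a-2b+2$ into $n-p-2q$ and gives $\frac{1}{2n-3}\ee_{\PDA}[(n-\pf_n-2\ch_n)\,\varphi(\pf_n, \ch_n+1)]$. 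The diagonal piece requires no shift and is already $\frac{1}{2n-3}\ee_{\PDA}[(n+3\pf_n-\ch_n-3)\,\varphi(\pf_n, \ch_n)]$. Collecting the four terms gives the claimed identity.

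The only point requiring care — and the main, though mild, obstacle — is justifying that summing over all of $\mathbb{N}\times\mathbb{N}$ and performing the index shifts does not introduce spurious boundary contributions. Here I would invoke the vanishing conditions recorded in Theorem~\ref{thm:PDA:pro:rec}: outside the range $0\le a \le n$, $1 < b \le n$ the probability $\puni(\pf_{n+1}=a,\ch_{n+1}=b)$ is zero, and each level-$n$ probability appearing on the right-hand side likewise vanishes when its arguments fall outside the admissible range. Consequently the recursion~\eqref{eq:rec:pda} holds as an identity between finitely supported sums over the full lattice, so each change of variables is merely a relabelling of a finite index set. No analytic subtleties arise because $\varphi$ is arbitrary but the distributions have finite support, which is exactly why the statement can be phrased for an unconstrained $\varphi$.
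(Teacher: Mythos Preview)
Your proposal is correct and follows essentially the same approach as the paper: multiply the pointwise recursion~\eqref{eq:rec:pda} by $\varphi(a,b)$ and sum over all $(a,b)$. The only cosmetic difference is that the paper packages the index shifts via indicator functions $I_{a,b}$ (so that, e.g., $I_{a,b}(\pf_n-1,\ch_n+1)$ automatically selects $\pf_n=a+1$, $\ch_n=b-1$), whereas you perform the reindexing explicitly; both are the same finite-sum manipulation, and your discussion of the boundary terms is adequate.
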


\begin{proof}
Fix two arbitrary numbers $a,b \in \mathbb{N}$ and consider the indicator function $I_{a,b}: \mathbb{R} \times \mathbb{R} \to \{0,1\}$  where $I_{a,b}(x,y)=1$ if and only if $x=a,y=b$. \twu{Multiplying both sides of~(\ref{eq:rec:pda}) by $\varphi(a,b)$ leads} to
{\small
\begin{align*}
(2n-3)&\ee _{\PDA}[ \varphi(\pf_{n+1},\ch_{n+1}) I_{a,b}(\pf_{n+1}, \ch_{n+1})]
=\ee _{\PDA}[ (n+3\pf_n-\ch_n-3)\varphi(\pf_n,\ch_n) I_{a,b}(\pf_{n}, \ch_{n})]  \\
&+\ee _{\PDA}[ \pf_n \varphi(\pf_n-1,\ch_n+1) I_{a,b}(\pf_{n}-1, \ch_{n}+1)]
+\ee _{\PDA}[ 3(\ch_n-\pf_n) \varphi(\pf_n+1,\ch_n) I_{a,b}(\pf_{n}+1, \ch_{n})]
\\
&+\ee _{\PDA}[ (n-\pf_n-2\ch_n) \varphi(\pf_n,\ch_n+1) I_{a,b}(\pf_{n}, \ch_{n}+1)] .
\end{align*}
}
\noindent
Now the theorem follows from summing over all $a$ and $b$.
\epf
\end{proof}

\rev{Theorem~\ref{thm:pda_arbitrary_function} leads to the following result on cherry distributions.}

\begin{proposition} \label{prop:pda_cherry_arbitrary_function}

Let $\psi: \mathbb{N} \to \mathbb{R}$ be an arbitrary function. Then for \rev{$n \ge 4$,} we have
\begin{align} \label{eq:pda_cherry_arbitrary_function}
\ee _{\PDA} &\psi(\ch_{n+1}) = \frac{1}{2n-3} \ee _{\PDA}[(n+2\ch_n-3) \: \psi (\ch_{n})] + \frac{1}{2n-3} \ee _{\PDA}[(n-2\ch_{n}) \: \psi (\ch_{n}+1)].
\end{align}

\end{proposition}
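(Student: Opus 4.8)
The plan is to obtain this cherry-only recursion as a direct specialization of the two-variable functional recursion in Theorem~\ref{thm:pda_arbitrary_function}. The key observation is that the right-hand side of~\eqref{eq:pda_cherry_arbitrary_function} involves only the cherry variable, so I would apply Theorem~\ref{thm:pda_arbitrary_function} to the particular test function $\varphi(a,b)=\psi(b)$, a legitimate map $\mathbb{N}\times\mathbb{N}\to\mathbb{R}$ that simply ignores its first argument. With this choice $\varphi(\pf_{n+1},\ch_{n+1})=\psi(\ch_{n+1})$ on the left, while on the right the four arguments $(\pf_n,\ch_n)$, $(\pf_n-1,\ch_n+1)$, $(\pf_n+1,\ch_n)$, $(\pf_n,\ch_n+1)$ collapse to $\psi(\ch_n)$, $\psi(\ch_n+1)$, $\psi(\ch_n)$, $\psi(\ch_n+1)$ respectively, since each depends only on the second coordinate.

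After this substitution I would group the two terms carrying $\psi(\ch_n)$, whose combined coefficient is $(n+3\pf_n-\ch_n-3)+3(\ch_n-\pf_n)$, and the two terms carrying $\psi(\ch_n+1)$, whose combined coefficient is $\pf_n+(n-\pf_n-2\ch_n)$. The crucial point, and the whole reason the cherry marginal is again Markov, is that the pitchfork contributions cancel: $3\pf_n-3\pf_n=0$ in the first group and $\pf_n-\pf_n=0$ in the second. This leaves coefficients $n+2\ch_n-3$ and $n-2\ch_n$, and dividing through by $2n-3$ produces exactly~\eqref{eq:pda_cherry_arbitrary_function}. This establishes the identity for $n\ge 6$, the range in which Theorem~\ref{thm:pda_arbitrary_function} is valid.

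Because the proposition is asserted from $n\ge 4$ while Theorem~\ref{thm:pda_arbitrary_function} begins only at $n\ge 6$ (indeed $\pf_n$ is not defined for $n<6$), I would finish by checking $n=4$ and $n=5$ directly against the explicit distributions. For $n=4,5$ every tree has exactly two cherries, so by~\eqref{eq:ch:four:five} both $\ch_4$ and $\ch_5$ equal $2$ almost surely. Substituting $\ch_4\equiv 2$ with $2n-3=5$ gives a right-hand side $\tfrac15\cdot 5\,\psi(2)+\tfrac15\cdot 0\cdot\psi(3)=\psi(2)=\ee_{\PDA}\psi(\ch_5)$, confirming $n=4$; substituting $\ch_5\equiv 2$ with $2n-3=7$ gives $\tfrac17\cdot 6\,\psi(2)+\tfrac17\cdot 1\cdot\psi(3)$, which matches $\ee_{\PDA}\psi(\ch_6)=\tfrac67\psi(2)+\tfrac17\psi(3)$ read off from the PDA part of~\eqref{eq:six:distribution}.

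I do not expect a genuine obstacle here: the computation is routine once the specialization is made, and the only substantive point is the cancellation of the $\pf_n$ terms. That cancellation reflects the structural fact, visible from Lemma~\ref{prop:yhk_growth_cases} and the edge decomposition~\eqref{eq:edge:dec}, that $|\pe_{pf}(T)\cup\pe_{ind}(T)|=n-2\ch(T)$ depends on $\ch(T)$ alone, so that the cherry count evolves as a Markov chain in its own right. The only mild bookkeeping issue is the mismatch between the ranges $n\ge 4$ and $n\ge 6$, which is dispatched by the two base cases above.
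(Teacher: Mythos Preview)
Your proposal is correct and follows essentially the same route as the paper: specialize Theorem~\ref{thm:pda_arbitrary_function} to $\varphi(x,y)=\psi(y)$ for $n\ge 6$, then verify $n=4,5$ directly from~\eqref{eq:ch:four:five} and~\eqref{eq:six:distribution}. The paper leaves the cancellation of the $\pf_n$ terms and the base-case checks implicit, so your write-up is in fact more detailed than the original.
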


\begin{proof}
Let $\psi: \mathbb{N} \to \mathbb{R}$ be an arbitrary function as in the statement of the proposition. Then~\eqref{eq:pda_cherry_arbitrary_function} clearly holds for $n=4,5$ in view of \twu{\eqref{eq:ch:four:five} and~\eqref{eq:six:distribution}.} 
Now consider the function $\varphi^*(x,y)=\psi(y)$ on $\mathbb{N} \times \mathbb{N}$. Applying Theorem~\ref{thm:pda_arbitrary_function} to the function $\varphi^*$ shows that~(\ref{eq:pda_cherry_arbitrary_function}) holds for $n\ge 6$.
\epf
\end{proof}

Note that the recursion in~\rev{Proposition~\ref{prop:pda_cherry_arbitrary_function} can be utilised } to study the moments of \rev{the} cherry distribution under the PDA model. As an example, we present below an alternative proof for the well-known result on the mean and variance of $\ch_n$ obtained by~\cite{McKenzie2000,steel93distr}.

\begin{corollary}
\label{cor:pda:ch}
For $n\geq 4$,  we have
\begin{eqnarray}
	\ee _{\PDA}(\ch_n) &=& \frac{n(n - 1)}{2(2n - 5)} = \frac{n}{4} + \frac{3}{8} + O(n^{-1}), \label{eq:pda_cherry_mean} \\
	\var _{\PDA}(\ch_n) &=& \frac{n(n - 1)(n - 4)(n - 5)}{2(2n - 5)^2(2n - 7)} = \frac{n}{16} - \frac{3}{32} + O(n^{-1}). \label{eq:pda_cherry_variance}
\end{eqnarray}

\end{corollary}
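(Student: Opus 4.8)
The plan is to apply Proposition~\ref{prop:pda_cherry_arbitrary_function} with carefully chosen test functions $\psi$ to extract the first two moments of $\ch_n$ via closed-form recursions. First I would set $\psi(x)=x$ to obtain a recursion for $m_n:=\ee_{\PDA}(\ch_n)$. Substituting $\psi(\ch_n)=\ch_n$ and $\psi(\ch_n+1)=\ch_n+1$ into~\eqref{eq:pda_cherry_arbitrary_function} and using linearity of expectation, the quadratic terms $\ee_{\PDA}(\ch_n^2)$ should cancel between the two brackets, leaving a clean first-order linear recursion in $m_n$ alone. My expectation is that this collapses to something of the form $m_{n+1}=\alpha_n m_n+\beta_n$ with rational coefficients in $n$; I would then verify that the proposed closed form $m_n=n(n-1)/(2(2n-5))$ satisfies this recursion together with the base case $m_6=16/7$ read off from~\eqref{eq:six:distribution} (since $\ee_{\PDA}(\ch_6)=2\cdot\frac{6}{7}+3\cdot\frac{1}{7}=\frac{15}{7}$, so I would double-check the base value against the stated formula, which gives $6\cdot5/(2\cdot7)=30/14=15/7$, confirming consistency).

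For the variance I would apply the proposition a second time with $\psi(x)=x^2$ (or equivalently $\psi(x)=\binom{x}{2}$, which often linearises the algebra more cleanly) to get a recursion for the second moment $s_n:=\ee_{\PDA}(\ch_n^2)$. Here the term $\ee_{\PDA}[(n-2\ch_n)(\ch_n+1)^2]$ will introduce a cubic moment $\ee_{\PDA}(\ch_n^3)$, so the key simplification to watch for is whether the cubic contributions from the two brackets cancel, just as the quadratic ones did in the mean computation. I anticipate they do cancel, by the same structural reason: the factor $(n-2\ch_n)$ paired against the shift $\ch_n\mapsto\ch_n+1$ is designed so that the leading-order term in $\psi$ drops out, leaving a first-order linear recursion for $s_n$ whose inhomogeneous part involves only $m_n$, which is already known. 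Solving this recursion (again by verifying the proposed rational closed form against the recursion and a base case from~\eqref{eq:six:distribution}) and then forming $\var_{\PDA}(\ch_n)=s_n-m_n^2$ yields~\eqref{eq:pda_cherry_variance}.

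The main obstacle I foresee is purely computational rather than conceptual: the variance recursion requires tracking several rational functions of $n$ and confirming the cubic-moment cancellation exactly, and the final step $s_n-m_n^2$ produces a fraction that must be simplified to the stated factored form $n(n-1)(n-4)(n-5)/(2(2n-5)^2(2n-7))$. The appearance of the factor $(2n-7)$ in the denominator signals that the natural recursion variable is not $s_n$ directly but perhaps a rescaled quantity, so I would look for the right normalisation that makes the telescoping transparent. Rather than guess the closed form blind, the cleanest route is verification: assume the stated formulas and check that both sides of the mean and second-moment recursions agree identically as rational functions of $n$, which reduces each claim to a polynomial identity that can be confirmed directly. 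Finally, the asymptotic expansions $\frac{n}{4}+\frac{3}{8}+O(n^{-1})$ and $\frac{n}{16}-\frac{3}{32}+O(n^{-1})$ follow by routine long division of the rational expressions, so these require no separate argument.
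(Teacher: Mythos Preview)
Your proposal is correct and follows essentially the same approach as the paper: apply Proposition~\ref{prop:pda_cherry_arbitrary_function} with $\psi(x)=x$ and then $\psi(x)=x^2$, observe that the highest-order moment cancels each time (the paper shows this explicitly, obtaining $\ee_{\PDA}(\ch_{n+1})=\frac{n}{2n-3}+\frac{2n-5}{2n-3}\ee_{\PDA}(\ch_n)$ and $\ee_{\PDA}(\ch_{n+1}^2)=\frac{2n-7}{2n-3}\ee_{\PDA}(\ch_n^2)+\frac{2(n-1)}{2n-3}\ee_{\PDA}(\ch_n)+\frac{n}{2n-3}$), and then close the recursion. The only tactical difference is that the paper \emph{solves} each first-order linear recursion via the summation factor method (multiplying by $2n-3$ for the mean and $(2n-3)(2n-5)$ for the second moment, which telescopes cleanly and explains the $(2n-5)$ and $(2n-7)$ denominators you anticipated), whereas you plan to \emph{verify} the stated closed forms against the recursion; either is fine. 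One small point: since the corollary is claimed for $n\ge 4$ and the recursion in Proposition~\ref{prop:pda_cherry_arbitrary_function} is valid for $n\ge 4$, you should anchor at $n=4$ (where $\ee_{\PDA}(\ch_4)=2$ from~\eqref{eq:ch:four:five}) rather than $n=6$, as the paper does.
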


\begin{proof}
Note that the corollary clearly holds for $n=4$ because by~\eqref{eq:ch:four:five} we have $\ee _{\PDA}(\ch_n)=2$ and $\var _{\PDA}(\ch_n)=0$.

Substituting $\psi(x) = x$ in the recursive equation~(\ref{eq:pda_cherry_arbitrary_function}) in Proposition~\ref{prop:pda_cherry_arbitrary_function} shows that
\begin{equation}
\label{eq:pda:ch:rec}
\ee _{\PDA}(\ch_{n+1}) = \frac{1}{2n-3}\ee _{\PDA}\big[(n + 2\ch_n - 3)\ch_n + (n - 2\ch_n)(\ch_n + 1)\big] = \frac{n}{2n - 3} + \frac{2n - 5}{2n - 3} \ee _{\PDA}(\ch_n)\,
\end{equation}
holds for $n\ge 4$.

The recurrence in~\eqref{eq:pda:ch:rec} can be solved by a summation factor method. First, we multiply both sides of~\eqref{eq:pda:ch:rec} by the summation factor $s_n=2n-3$. Next, set $f(n)= s_{n-1}\ee _{\PDA}(\ch_n)=(2n - 5) \ee _{\PDA}(\ch_n)$ for $n\ge 4$. 
Then substituting $n$ with $n-1$ in~\eqref{eq:pda:ch:rec} leads to
\begin{equation*}
	f(n) = (n - 1) +f(n - 1)~~\mbox{for $n\ge 5$}.
\end{equation*}
Finally, solving the recurrence on $f(n)$ gives us that
\begin{equation}
\label{eq:ch:mean:pda:recur}
f(n) =	\twu{ \sum_{k=5}^{n}(k-1)+f(4)=}\sum_{k=1}^{n}(k-1)=\frac{n (n - 1)}{2}~~\mbox{for $n\ge 5$},
\end{equation}
from which~(\ref{eq:pda_cherry_mean}) follows. \twu{Note that the second equality in~\eqref{eq:ch:mean:pda:recur}}  follows from the fact \twu{that} $f(4)=6$,  and the third equality follows from~\eqref{eq:triangular_numbers}.

To establish~\eqref{eq:pda_cherry_variance}, substituting $\psi(x) = x^2$ into~(\ref{eq:pda_cherry_arbitrary_function}) and using~\eqref{eq:pda_cherry_mean} shows that
\begin{eqnarray}
	\ee _{\PDA}(\ch_{n+1}^2) &=& \frac{2n - 7}{2n - 3} \ee _{\PDA}(\ch_{n}^2) + \frac{2 (n - 1)}{2n - 3} \ee _{\PDA}(\ch_{n}) + \frac{n}{2n - 3} \notag \\
    &=& \frac{2n - 7}{2n - 3} \ee _{\PDA}(\ch_{n}^2) + \frac{n (n - 1)^2}{(2n - 3) (2n - 5)} + \frac{n}{2n - 3} \label{eq:pda:pf:rec}
\end{eqnarray}
holds for $n\ge 4$. Similarly to the proof of~\eqref{eq:pda_cherry_mean},  the recurrence in~\eqref{eq:pda:pf:rec} can be solved using the summation factor method. That is, consider the summation factor $s^*_n=(2n-3)(2n-5)$ and set $f^*(n)= s^*_{n-1}\ee _{\PDA}(\ch^2_n)=(2n - 5)(2n-7) \ee _{\PDA}(\ch^2_n)$.
Then for $n\ge 5$, by~\eqref{eq:pda:pf:rec} we have
\begin{eqnarray*}
f^*(n) &=& f^*(n-1) + (n - 1) (n - 2)^2 + (n - 1) (2n - 7)
    = f^*(4) + \sum_{k=3}^{n-2}k^{(3)}-3\sum_{k=4}^{n-1}k \\
    &=& \sum_{k=1}^{n-2}k^{(3)}-3\sum_{k=1}^{n-1}k
    = \frac{(n-2)^{(4)}}{4}-\frac{3(n-1)n}{2}
    = \frac{(n-1)n(n^2-n-8)}{4}.
\end{eqnarray*}
Here the third equality follows from $f^*(4)=12$ and the fourth one follows from~\eqref{eq:telescopic_sum}.
This implies
\begin{equation}
\label{eq:pda_cherry_square}
	\ee _{\PDA}(\ch_{n}^2) = \frac{1}{4} \frac{n (n-1)(n^2 - n -8)}{(2n - 5) (2n - 7)},
\end{equation}
\noindent
and hence~(\ref{eq:pda_cherry_variance}) follows in view of~\eqref{eq:pda_cherry_mean}.
\epf
\end{proof}

Using the recursion in Proposition~\ref{prop:pda_cherry_arbitrary_function} also leads to an alternative proof of the following exact formula on \rev{the} cherry distribution for the PDA model obtained by~\cite{hendy1982branch}.

\begin{theorem} \label{thm:pda_cherry_probability_distribution}
For $n\ge 4$, we have
\begin{equation} \label{eq:pda_cherry_probability}
\pp _{\PDA}(\ch_{n+1}=k) = \frac{n+2k-3}{2n-3}\pp _{\PDA}(\ch_{n}=k)+\frac{n-2k+2}{2n-3}\pp _{\PDA}(\ch_{n}=k-1)~\mbox{for $1<k<n$.}
\end{equation}
Furthermore, we have
\begin{equation}
\pp _{\PDA}(\ch_n=k)=
\frac{n!(n-2)!(n-4)!2^{n-2k}}{(n-2k)!(2n-4)!k!(k-2)!}~~~\mbox{for~$2\leq k\leq n/2$}.\label{eq:pda_cherry_pd}
\end{equation}
\end{theorem}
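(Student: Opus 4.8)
The plan is to prove the two assertions in turn: first the single-step recursion~\eqref{eq:pda_cherry_probability}, which drops straight out of Proposition~\ref{prop:pda_cherry_arbitrary_function}, and then the closed form~\eqref{eq:pda_cherry_pd} by induction on $n$ using that recursion. For the recursion, I would apply Proposition~\ref{prop:pda_cherry_arbitrary_function} to the indicator function $\psi$ defined by $\psi(x)=1$ if $x=k$ and $\psi(x)=0$ otherwise. Then the left-hand side of~\eqref{eq:pda_cherry_arbitrary_function} becomes $\pp_{\PDA}(\ch_{n+1}=k)$. On the right-hand side, the factor $\psi(\ch_n)$ in the first expectation is nonzero exactly when $\ch_n=k$, so $\ee_{\PDA}[(n+2\ch_n-3)\psi(\ch_n)]=(n+2k-3)\pp_{\PDA}(\ch_n=k)$, while $\psi(\ch_n+1)$ in the second expectation is nonzero exactly when $\ch_n=k-1$, giving $\ee_{\PDA}[(n-2\ch_n)\psi(\ch_n+1)]=(n-2k+2)\pp_{\PDA}(\ch_n=k-1)$. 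Dividing by $2n-3$ yields~\eqref{eq:pda_cherry_probability} for $1<k<n$.

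For the closed form, let $P(n,k)$ denote the right-hand side of~\eqref{eq:pda_cherry_pd}, adopting the convention $1/m!=0$ for every negative integer $m$ (the poles of the reciprocal Gamma function), so that $P(n,k)$ automatically vanishes for $k$ outside the support $2\le k\le n/2$. I would then induct on $n$, the recursion being available since $2\le k\le(n+1)/2<n$. The base case is $n=4$, where the only admissible value is $k=2$ and direct substitution gives $P(4,2)=1$, matching~\eqref{eq:ch:four:five}. For the inductive step, assuming $\pp_{\PDA}(\ch_n=j)=P(n,j)$ for all $j$, I would substitute into~\eqref{eq:pda_cherry_probability} and verify the identity $\frac{n+2k-3}{2n-3}P(n,k)+\frac{n-2k+2}{2n-3}P(n,k-1)=P(n+1,k)$.

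The heart of the argument is this factorial identity. Writing $C=n!(n-2)!(n-4)!/(2n-4)!$, both $P(n,k)$ and $P(n,k-1)$ carry the factor $C$, and after pulling out the common term $2^{n-2k}/[(n-2k)!\,k!\,(k-2)!]$ the bracket reduces to $(n+2k-3)+4k(k-2)/(n-2k+1)$. Clearing the denominator $n-2k+1$, the numerator $(n+2k-3)(n-2k+1)+4k(k-2)$ collapses—this is the pleasant cancellation—to $n^2-2n-3=(n+1)(n-3)$, independent of $k$. Using $(n-2k+1)(n-2k)!=(n+1-2k)!$ and matching the resulting prefactor $C\,(n+1)(n-3)/(2n-3)$ against $P(n+1,k)$, which carries $C'=C\,(n+1)(n-1)(n-3)/[(2n-2)(2n-3)]$ together with one extra power of $2$, then gives exact agreement after the factor $2(n-1)$ cancels.

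\textbf{Main obstacle.} I expect the algebra of the inductive step to be the only genuine work, and the one point needing care is the behaviour at the endpoints of the support: at the top value $k=\lfloor(n+1)/2\rfloor$ the term $P(n,k)$ refers to an out-of-range argument, and at $k=2$ the term $P(n,k-1)=P(n,1)$ does likewise. The reciprocal-factorial convention forces both to vanish, so the single identity above covers every case; but I would check explicitly that these vanishing terms coincide with those to which the recursion already assigns zero weight, or else dispatch the two boundary values of $k$ by hand.
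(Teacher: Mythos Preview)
Your proposal is correct and follows essentially the same route as the paper: the recursion~\eqref{eq:pda_cherry_probability} is obtained by plugging the indicator $\psi=I_k$ into Proposition~\ref{prop:pda_cherry_arbitrary_function}, and the closed form~\eqref{eq:pda_cherry_pd} is then verified by induction on $n$ starting from the $n=4$ base case in~\eqref{eq:ch:four:five}. The paper compresses the inductive step to ``it is straightforward to show'', whereas you carry out the factorial algebra explicitly (and correctly, including the key simplification $(n+2k-3)(n-2k+1)+4k(k-2)=(n+1)(n-3)$ and the endpoint handling via the $1/m!=0$ convention).
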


\begin{proof}
First, by~\eqref{eq:ch:four:five} the theorem clearly holds for $n=4$.
Next, consider the function $I_k(x)$ which equals $1$ if $x=k$, and $0$ otherwise. By taking $\psi(x)=I_k(x)$ in~(\ref{eq:pda_cherry_arbitrary_function}) shows that
\eqref{eq:pda_cherry_probability} holds for $n\ge 4$ and $1<k<n$. Together with $\pp _{\PDA}(\ch_{n}=1)=0$ for $n\ge 4$, it is straightforward to show that \rev{(\ref{eq:pda_cherry_pd})} holds for $n> 4 $ and $2\le k \le n/2$.
\epf
\end{proof}

We complete this section by using Theorem~\ref{thm:PDA:pro:rec}
to compute  the mean and variance of $\pf_n$, as well as the covariance and correlation between $\pf_n$ and $\ch_n$. Note that they are negatively correlated.

\begin{proposition}
\label{prop:pda:pf}
For $n \geq 6$, we have
\begin{eqnarray}
	\ee _{\PDA}(\pf_{n}) &=& \frac{n (n - 1) (n - 2)}{2 (2n - 5) (2n - 7)} = \frac{n}{8} + \frac{3}{8} + O(n^{-1}), \label{eq:pda_pitchfork_mean} \\
    \cov _{\PDA}(\pf_{n}, \ch_{n}) &=& - \frac{3 n (n - 1) (n - 2) (n - 5)}{2 (2n - 5)^2 (2n - 7) (2n - 9)} = - \frac{3}{32} - \frac{15}{32n} + O(n^{-2}), \label{eq:pda_covariance} \\
    \var _{\PDA}(\pf_{n}) &=& \frac {3 n (n - 1) (n - 2) (4n^4 - 76n^3 + 527n^2 - 1555n + 1610)}{4 (2n - 5)^2 (2n - 7)^2 (2n - 9) (2n - 11)}
    = \frac{3n}{64} - \frac{45}{128n} + O(n^{-2}).\label{eq:pda_pitchfork_variance}
\end{eqnarray}
\noindent
Moreover, $\rho_{\PDA}(\pf_6, \ch_6)=\rho_{\PDA}(\pf_7, \ch_7)=-1$, and the correlation coefficient sequence $\{\rho_{\PDA}(\pf_n, \ch_n)\}_{n\ge 7}$ increases monotonically in $n$ towards zero.
\end{proposition}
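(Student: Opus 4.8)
The plan is to read off all three second-order quantities from the functional recursion of Theorem~\ref{thm:pda_arbitrary_function}, choosing the test function $\varphi$ so that the top-degree terms cancel, and then to solve the resulting scalar recurrences by the summation factor method already used in the proof of Corollary~\ref{cor:pda:ch}. Throughout, the cherry moments $\ee_{\PDA}(\ch_n)$ and $\ee_{\PDA}(\ch_n^2)$ from \eqref{eq:pda_cherry_mean} and \eqref{eq:pda_cherry_square} are treated as known inputs, and the initial data at $n=6$ are read off from \eqref{eq:six:distribution}, giving $\ee_{\PDA}(\pf_6)=12/7$ and $\ee_{\PDA}(\pf_6^2)=\ee_{\PDA}(\pf_6\ch_6)=24/7$. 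First I would take $\varphi(x,y)=x$: the quadratic terms cancel and one is left with $(2n-3)\ee_{\PDA}(\pf_{n+1})=(2n-7)\ee_{\PDA}(\pf_n)+3\ee_{\PDA}(\ch_n)$, which after inserting \eqref{eq:pda_cherry_mean} and summing yields \eqref{eq:pda_pitchfork_mean}. Next, with $\varphi(x,y)=xy$ the cubic terms cancel and I obtain $(2n-3)\ee_{\PDA}(\pf_{n+1}\ch_{n+1})=(2n-9)\ee_{\PDA}(\pf_n\ch_n)+3\ee_{\PDA}(\ch_n^2)+(n-1)\ee_{\PDA}(\pf_n)$; solving this with \eqref{eq:pda_cherry_square} and the mean just found, then subtracting $\ee_{\PDA}(\pf_n)\ee_{\PDA}(\ch_n)$, gives \eqref{eq:pda_covariance}. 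Finally $\varphi(x,y)=x^2$ leaves $(2n-3)\ee_{\PDA}(\pf_{n+1}^2)=(2n-11)\ee_{\PDA}(\pf_n^2)+6\ee_{\PDA}(\pf_n\ch_n)-2\ee_{\PDA}(\pf_n)+3\ee_{\PDA}(\ch_n)$; solving (now with a product summation factor) and subtracting $\ee_{\PDA}(\pf_n)^2$ produces \eqref{eq:pda_pitchfork_variance}. The $O(\cdot)$ expansions then follow by expanding these rational functions in powers of $1/n$, and the negativity of \eqref{eq:pda_covariance} gives the negative correlation.

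For the correlation coefficient, the two boundary cases are immediate once Parts above are in hand: substituting $n=6$ and $n=7$ into \eqref{eq:pda_covariance}, \eqref{eq:pda_pitchfork_variance} and \eqref{eq:pda_cherry_variance} gives $\cov_{\PDA}(\pf_n,\ch_n)^2=\var_{\PDA}(\pf_n)\,\var_{\PDA}(\ch_n)$ in both cases, so $\rho_{\PDA}(\pf_n,\ch_n)^2=1$, and since the covariance is negative we get $\rho_{\PDA}(\pf_n,\ch_n)=-1$. For $n=6$ one may alternatively note from \eqref{eq:six:distribution} that the joint law is supported on the two points $(2,2)$ and $(0,3)$, which are collinear with negative slope, forcing $\rho_{\PDA}=-1$.

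For the monotone convergence on $n\ge 7$, I would combine the three explicit formulas into the closed form $\rho_{\PDA}(\pf_n,\ch_n)^2 = g(n)$, where $g(n)=\dfrac{6(n-2)(n-5)(2n-7)(2n-11)}{(n-4)(2n-9)\,Q(n)}$ and $Q(n)=4n^4-76n^3+527n^2-1555n+1610$; here the factors $n,n-1,n-2$, the products and the four powers of $(2n-5),(2n-7),(2n-9)$ all cancel cleanly. One checks $g(7)=1$, and from the leading asymptotics $\cov_{\PDA}=\Theta(1)$ and $\var_{\PDA}(\pf_n),\var_{\PDA}(\ch_n)=\Theta(n)$ one gets $g(n)=\Theta(n^{-2})\to 0$, so $\rho_{\PDA}(\pf_n,\ch_n)=-\sqrt{g(n)}\to 0^-$. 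Since each $\rho_{\PDA}(\pf_n,\ch_n)$ is negative, the claimed monotone increase toward $0$ is equivalent to $g$ being strictly decreasing for $n\ge 7$, which I would establish by clearing denominators in the inequality $g(n)-g(n+1)>0$: the denominator $R(n)R(n+1)$ is a product of factors each positive for $n\ge 7$, and the numerator $R(n+1)P(n)-R(n)P(n+1)$ is a single polynomial whose positivity I would verify after the shift $n\mapsto m+7$ by checking that all resulting coefficients in $m$ are nonnegative, with at least one strictly positive. That $Q(n)>0$ for $n\ge 6$ is guaranteed since there $Q(n)$ is a positive multiple of $\var_{\PDA}(\pf_n)>0$, so all denominators keep a fixed sign throughout the range. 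This final polynomial inequality is the only non-mechanical step, and I expect it to be the main obstacle, as the cross-multiplied difference has degree close to ten and its positivity must be certified uniformly for all $n\ge 7$ rather than merely asymptotically.
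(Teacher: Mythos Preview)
Your proposal is correct and follows essentially the same route as the paper: the three recursions you derive from Theorem~\ref{thm:pda_arbitrary_function} with $\varphi(x,y)=x$, $xy$, $x^2$ coincide exactly with the paper's, the initial data and summation-factor method are the same, and the monotonicity of $\rho_{\PDA}^2$ is established in both by a polynomial positivity argument on the cross-multiplied difference. The only minor difference is in certifying that positivity: the paper checks $7\le n\le 15$ numerically and then proves $R_1(n)-R_2(n)>0$ for $n\ge 16$ by grouping terms, whereas you propose the shift $n\mapsto m+7$ and a sign check on the coefficients; be aware that the shifted coefficients may not all be nonnegative, so you may need a hybrid of small-$n$ verification and a larger shift, exactly as the paper does.
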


\begin{proof}
The Proposition holds for $n=6$ because by~\eqref{eq:six:distribution} we have
\begin{equation}
\label{pf:prop:pda:pf:initial}
\ee _{\PDA}(\pf_{6})=12/7,  \quad   \cov _{\PDA}(\pf_{6}, \ch_{6}) =-12/49,
\quad~~\mbox{and}~~\quad
\var _{\PDA}(\pf_{6})=24/49.
\end{equation}
As the remainder proof is similar to that of Corollary~\ref{cor:pda:ch}, we only outline the main differences here.

First, substituting $\varphi(x, y) = x$ into Theorem \ref{thm:pda_arbitrary_function} and using \rev{(\ref{eq:pda_cherry_mean})} shows that for $n\ge 6$ we have
\begin{equation*}
	\ee _{\PDA}(\pf_{n+1}) = \frac{2n - 7}{2n - 3} \ee _{\PDA}(\pf_{n}) + \frac{3}{2n - 3} \ee _{\PDA}(\ch_{n})
	=\frac{2n - 7}{2n - 3} \ee _{\PDA}(\pf_{n})+\frac{3n(n-1)}{2(2n-3)(2n-5)}.
\end{equation*}
Solving this recurrence with the summation factor $(2n - 3) (2n - 5)$ and \twu{the first equality in~\eqref{pf:prop:pda:pf:initial}} yields~(\ref{eq:pda_pitchfork_mean}).

Next, substituting $\varphi(x, y) = xy$ into Theorem \ref{thm:pda_arbitrary_function} shows that for $n\ge 6$, we have
\begin{eqnarray*}
	\ee _{\PDA}(\pf_{n+1} \ch_{n+1}) &=& \frac{2n - 9}{2n - 3} \ee _{\PDA}(\pf_{n} \ch_{n}) + \frac{n - 1}{2n - 3} \ee _{\PDA}(\pf_{n}) + \frac{3}{2n - 3} \ee _{\PDA}(\ch_{n}^2) \\
   &=&	\frac{2n - 9}{2n - 3} \ee _{\PDA}(\pf_{n} \ch_{n}) + \twu{\frac{n(n-1)(5n^2-9n-20)}{4(2n - 3)(2n-5)(2n-7)}},
\end{eqnarray*}
where the second equality follows from~\eqref{eq:pda_cherry_square} and~(\ref{eq:pda_pitchfork_mean}). Solving this recurrence with the summation factor $(2n - 3) (2n - 5) (2n - 7)$ and \twu{the initial condition $\ee _{\PDA}(\pf_{6} \ch_{6}) = 24/7$} leads to
\begin{equation}
\label{eq:pda_ch_pf}
	\ee _{\PDA}(\pf_{n} \ch_{n}) = \frac{1}{4} \frac{n (n^4 - 6n^3 + 5n^2 + 12n - 12)}{(2n - 5) (2n - 7) (2n - 9)},
\end{equation}
\noindent
from which~(\ref{eq:pda_covariance}) follows.

Finally, substituting $\varphi(x, y) = x^2$ into Theorem \ref{thm:pda_arbitrary_function} shows that for $n\ge 6$, we have
\begin{equation*}
	\ee _{\PDA} (\pf_{n+1}^2) = \frac{2n - 11}{2n - 3} \ee _{\PDA} (\pf_{n}^2) + \frac{6}{2n - 3} \ee _{\PDA}(\pf_{n} \ch_{n}) - \frac{2}{2n - 3} \ee _{\PDA} (\pf_{n}) + \frac{3}{2n - 3} \ee _{\PDA} (\ch_{n}).
\end{equation*}
By~(\ref{eq:pda_cherry_mean}),~(\ref{eq:pda_pitchfork_mean}), and~(\ref{eq:pda_ch_pf}), this recurrence can be solved using  the summation factor $(2n - 3) (2n - 5) (2n - 7) (2n - 9)$ and the initial condition $\ee _{\PDA} (\pf_{6}^2) = 24/7$ to give
\begin{equation*}
	\ee _{\PDA} (\pf_{n}^2) = \frac{1}{4} \frac {n (n^5 - 7n^4 - 19n^3 + 229n^2 - 480n + 276)}{(2n - 5) (2n - 7) (2n - 9) (2n - 11)},
	\end{equation*}
\noindent
from which~(\ref{eq:pda_pitchfork_variance}) follows.

\rev{
Since $\rho_\PDA(\pf_n,\ch_n)=\cov_\PDA(\pf_n,\ch_n)/\sqrt{\var_\PDA(\pf_n)\var_\PDA(\ch_n)}$, it is clear that $\rho_{\PDA}(\pf_6, \ch_6)=\rho_{\PDA}(\pf_7, \ch_7)=-1$ and  the sequence $\{\rho_\PDA(\pf_n,\ch_n)\}_{n\ge 7}$ converges to $0$. Hence it remains to show that this sequence is decreasing. To this end, let $g(n)=4n^4-76n^3+527n^2-1555n+1610$. Then  it suffices to show that the ratio
$$
R(n)=\frac{\rho^2_u(\pf_n,\ch_n)}{\rho^2_u(\pf_{n+1},\ch_{n+1})}=\frac{(n-2)(n-3)(n-5)(2n-11)(2n-7)^2g(n+1)}{(n-1)(n-4)^2(2n-5)(2n-9)^2g(n)}
$$
is greater than 1 for $n\ge 7$. Since $R(n)>1$ holds for $7\le n \le 15$ by numerical computation, we may further assume $n\ge 16$.  Denote the numerator and denominator of $R(n)$ by $R_1(n)$ and $R_2(n)$, respectively. Then using $n\ge 16$ we have
{\small
\begin{eqnarray*}
R_1(n)-R_2(n)&=&(n-6)[(8n-109)^2n^6+(8444n-132200)\twu{n^5}+(519316n-1250941)n^3+(1775858n-1318095)n+364350] \\
&>&0,
\end{eqnarray*}
}
which completes the proof.
}
\epf
\end{proof}

\section{Subtree distributions under the YHK model}
\label{sec:results_yhk}

\rev{In this section, we study the joint distribution of the random variables $A_n$ (i.e. the number of pitchforks) and $B_n$ (i.e. the number of cherries) under the YHK model.  For easy comparison, the results are presented in an order similar to that in Section~\ref{sec:results_pda}. }

\rev{Our starting point is the following recursion on \rev{the} joint distribution, whose proof is omitted.
}

\begin{theorem} \label{thm:yhk_pitchfork_probability}
\rev{Let $n\ge 6$. Then we have $\pyule(\pf_{n}=a, \ch_{n}=b)=0$ if either $a\in \{-1,n,n+1\}$ or $b\in \{1,n\}$ holds. Furthermore, for $0\le a \le n$ and $1< b \le n$ we have}
\begin{align}
    \pyule&(\pf_{n+1} = a, \ch_{n+1} = b) =
    \frac{2a}{n}\,\pyule(\pf_{n} = a, \ch_{n} = b) + \frac{a + 1}{n}\,\pyule(\pf_{n} = a + 1, \ch_n = b - 1) \nonumber \\
    &\quad \quad \quad+ \frac{2(b - a + 1)}{n}\,\pyule(\pf_{n} = a - 1, \ch_{n} = b) +
    \frac{n - a - 2b + 2}{n}\,\pyule(\pf_{n} = a, \ch_{n} = b - 1).
\end{align}

\end{theorem}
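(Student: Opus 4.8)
The plan is to follow the proof of Theorem~\ref{thm:PDA:pro:rec} along closely parallel lines, exploiting the fact that the YHK process differs from the PDA process only in Step~(iii), where the attaching edge is drawn uniformly from the pendant edge set $\pe(T_k)$ rather than from the full edge set $E(T_k)$. For the vanishing claim I would argue exactly as in the PDA case: since $0\le \pf(T)\le n/3$ and $1<\ch(T)\le n/2$ hold for every $T\in\tsp_n$, it follows that $\pyule(\pf_n=a,\ch_n=b)=0$ whenever $a\in\{-1,n,n+1\}$ or $b\in\{1,n\}$.

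For the recursion I would take a sequence $T_2,\dots,T_{n+1}$ produced by the YHK process, so that $T_{i+1}=T_i[e_i;x_{i+1}]$ with $e_i$ uniform on $\pe(T_i)$. The one structural input that drives every change relative to PDA is that a binary tree on $i$ leaves has exactly $i$ pendant edges, so $|\pe(T_i)|=i$; consequently each transition probability carries the denominator $n$ in place of $2n-3$. Conditioning on $(\pf(T_n),\ch(T_n))=(p,q)$ and applying the law of total probability, Lemma~\ref{prop:yhk_growth_cases} forces the conditional probability to vanish unless $(p,q)\in\Delta(a,b)=\{(a,b),(a+1,b-1),(a-1,b),(a,b-1)\}$, so the expansion collapses to the same four terms as in the PDA argument.

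The heart of the proof, and the only place where genuine care is needed, is evaluating the four conditional probabilities after restricting the eligible edges to $\pe(T_n)$. Intersecting each case of Lemma~\ref{prop:yhk_growth_cases} with the pendant edge set, I would obtain: the self-transition $(a,b)\to(a,b)$ now requires $e\in\pe_{cp}(T_n)$ (the interior set $\inte_{nec}$ that contributed in the PDA case is no longer sampled), giving probability $|\pe_{cp}(T_n)|/n=2a/n$; the transition $(a+1,b-1)\to(a,b)$ requires $e\in\pe_{pf}(T_n)$, giving $(a+1)/n$; and the transition $(a,b-1)\to(a,b)$ requires $e\in\pe_{ind}(T_n)$, giving $(n-a-2b+2)/n$. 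The subtlest term is $(a-1,b)\to(a,b)$: raising the pitchfork count now demands $e\in\pe_{ec}(T_n)$ alone, since the interior set $\inte_{ec}(T_n)$ is excluded under YHK, so that the coefficient is $|\pe_{ec}(T_n)|/n=2(b-a+1)/n$ rather than the $3(b-a+1)$ obtained in PDA. I expect this removal of the interior-edge contributions---which simultaneously strips $\inte_{nec}$ from the diagonal term and $\inte_{ec}$ from the pitchfork-raising term---to be the main point to get right; once the four values are in hand, substituting them into the total-probability expansion yields the stated recursion.
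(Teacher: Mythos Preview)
Your proposal is correct and follows exactly the approach the paper indicates: mirror the proof of Theorem~\ref{thm:PDA:pro:rec}, replace the denominator $2n-3$ by $|\pe(T_n)|=n$, and drop the interior-edge contributions $\inte_{nec}$ and $\inte_{ec}$ from the numerators in the analogues of \eqref{eq:case1} and \eqref{eq:case3}. Your identification of the four pendant-edge classes and their sizes matches the decomposition~\eqref{eq:edge:dec} precisely, so substituting into the total-probability expansion yields the stated recursion.
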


As described in Section~\ref{subsection:model}, the tree generating schemes of both models are similar, with the main difference being that the YHK model uses a random pendant edge at each step while the PDA model uses a random edge.  As a result, the proof of Theorem~\ref{thm:yhk_pitchfork_probability} is similar to that of Theorem~\ref{thm:PDA:pro:rec}; the main differences are the expressions in~\eqref{eq:case1}-\eqref{eq:case4}, where certain terms in the numerators become zero and \twu{the denominator $2n-3$ (the total number of edges in $T_n$) is replaced with $n$ (the total number of pendant edges in $T_n$)}.
  Note also that the coefficients in the above recursion are exactly the same as in the rooted case~\citep{WuChoi16}, but the initial values are different.


To study the moments of $\pf_n$ and $\ch_n$, we  present below a functional recursion form of Theorem~\ref{thm:yhk_pitchfork_probability}, whose proof is similar to that of Theorem~\ref{thm:pda_arbitrary_function} and hence omitted here.

\begin{theorem} \label{thm:yhk_arbitrary_function}

Let $\varphi: \mathbb{N} \times \mathbb{N} \to \mathbb{R}$ be an arbitrary function. Then for $n\ge 6$ we have
\begin{align}
    \ee _{\YHK} \varphi(\pf_{n+1}, \ch_{n+1}) &= \frac{2}{n} \ee _{\YHK}[\pf_{n} \: \varphi (\pf_{n}, \ch_{n})] + \frac{1}{n} \ee _{\YHK}[\pf_{n} \: \varphi (\pf_{n}-1, \ch_{n}+1)]  \nonumber \\
    & \quad + \frac{2}{n} \ee _{\YHK}[ (\ch_{n} - \pf_n) \: \varphi (\pf_{n} + 1, \ch_{n})]  + \frac{1}{n}  \ee _{\YHK}[(n-\pf_{n}-2\ch_{n}) \: \varphi (\pf_{n}, \ch_{n}+1)].
\end{align}
\end{theorem}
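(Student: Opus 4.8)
The plan is to closely follow the proof of Theorem~\ref{thm:pda_arbitrary_function}, simply replacing its input recursion~\eqref{eq:rec:pda} by the YHK recursion of Theorem~\ref{thm:yhk_pitchfork_probability} and the denominator $2n-3$ by $n$. First I would fix arbitrary $a,b\in\mathbb{N}$ and reintroduce the indicator $I_{a,b}:\mathbb{R}\times\mathbb{R}\to\{0,1\}$ with $I_{a,b}(x,y)=1$ precisely when $x=a$ and $y=b$. Multiplying both sides of the recursion in Theorem~\ref{thm:yhk_pitchfork_probability} by $\varphi(a,b)$ then turns each probability into an expectation of $\varphi$ against an indicator, via the elementary identity $c\,\varphi(a,b)\,\pyule(\pf_n=a',\ch_n=b')=c\,\ee_{\YHK}[\varphi(a,b)\,I_{a',b'}(\pf_n,\ch_n)]$, valid for any scalar $c$ since $\varphi(a,b)$ is a constant once $a,b$ are fixed.

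The one step demanding care is the reindexing of the linear prefactors. On the event carved out by $I_{a',b'}(\pf_n,\ch_n)$ one may substitute $\pf_n=a'$ and $\ch_n=b'$ inside the bracket, so the YHK coefficients acquire an intrinsic form: the weight $a+1$ becomes $\pf_n$ on $\{\pf_n=a+1,\ch_n=b-1\}$, the weight $2(b-a+1)$ becomes $2(\ch_n-\pf_n)$ on $\{\pf_n=a-1,\ch_n=b\}$, and $n-a-2b+2$ becomes $n-\pf_n-2\ch_n$ on $\{\pf_n=a,\ch_n=b-1\}$. Simultaneously $\varphi(a,b)$ is re-expressed as $\varphi(\pf_n-1,\ch_n+1)$, $\varphi(\pf_n+1,\ch_n)$, and $\varphi(\pf_n,\ch_n+1)$ on the respective events, while the leading term with coefficient $2a/n$ contributes $\frac{2}{n}\ee_{\YHK}[\pf_n\,\varphi(\pf_n,\ch_n)\,I_{a,b}(\pf_n,\ch_n)]$.

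Finally I would sum over all $a$ and $b$. Since $\sum_{a,b}I_{a,b}(x,y)=1$ for every $(x,y)$, the left-hand side collapses to $\ee_{\YHK}\varphi(\pf_{n+1},\ch_{n+1})$, and each of the four families of bracketed terms on the right collapses to a single expectation, producing exactly the four summands of the claimed identity. The main obstacle is bookkeeping rather than conceptual: one must check that each coordinate shift in $\Delta(a,b)=\{(a,b),(a+1,b-1),(a-1,b),(a,b-1)\}$ is matched by the correct reindexing of both the polynomial weight and the argument of $\varphi$, so that no stray terms survive the summation. Because Theorem~\ref{thm:yhk_pitchfork_probability} is built on the very same index set $\Delta(a,b)$ as Theorem~\ref{thm:PDA:pro:rec}, this verification is structurally identical to the PDA case and presents no genuine difficulty.
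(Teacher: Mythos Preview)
Your proposal is correct and matches the paper's own approach exactly: the paper omits the proof of Theorem~\ref{thm:yhk_arbitrary_function}, noting only that it ``is similar to that of Theorem~\ref{thm:pda_arbitrary_function},'' which is precisely the template you follow. Your reindexing of the four coefficients and the corresponding shifts in the argument of $\varphi$ are all correct, and the summation over $(a,b)$ collapses the indicators as claimed.
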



Theorem~\ref{thm:yhk_arbitrary_function} leads to the following proposition on cherry distributions, whose proof is similar to that in Proposition~\ref{prop:pda_cherry_arbitrary_function} and hence omitted here.

\begin{proposition} \label{prop:yhk_cherry_arbitrary_function}
Let $\psi: \mathbb{N}  \to \mathbb{R} $ be an arbitrary function. Then for $n\ge 4$ we have
\begin{align} \label{eq:ch:yule:functional}
    \ee _{\YHK} &\psi(\ch_{n+1}) = \frac{1}{n} \ee _{\YHK}[ 2 \ch_n \: \psi (\ch_{n}) + (n - 2\ch_{n}) \: \psi (\ch_{n}+1)].
\end{align}
\end{proposition}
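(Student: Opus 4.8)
The plan is to follow the same two-part strategy used in the proof of Proposition~\ref{prop:pda_cherry_arbitrary_function}: first dispose of the small cases $n=4,5$ by direct evaluation, and then reduce the range $n\ge 6$ to a single application of the bivariate functional recursion in Theorem~\ref{thm:yhk_arbitrary_function}.

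For the base cases, I would simply use the explicit distributions recorded in~\eqref{eq:ch:four:five} and~\eqref{eq:six:distribution}. Since every tree in $\tsp_4$ and $\tsp_5$ has exactly two cherries, $\ch_4=\ch_5=2$ almost surely under the YHK model, so both sides of~\eqref{eq:ch:yule:functional} can be computed by hand. For $n=4$ the coefficient $n-2\ch_4=0$ kills the $\psi(\ch_4+1)$ term and the right-hand side collapses to $\psi(2)=\ee_{\YHK}\psi(\ch_5)$. For $n=5$ the right-hand side evaluates to $\tfrac{1}{5}\bigl(4\psi(2)+\psi(3)\bigr)$, which matches $\ee_{\YHK}\psi(\ch_6)=\tfrac{4}{5}\psi(2)+\tfrac{1}{5}\psi(3)$ read off from the YHK part of~\eqref{eq:six:distribution} after marginalising over the pitchfork count.

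For $n\ge 6$, I would introduce the two-variable function $\varphi^*(x,y)=\psi(y)$ on $\mathbb{N}\times\mathbb{N}$ and feed it into Theorem~\ref{thm:yhk_arbitrary_function}. Under this substitution the four arguments appearing on the right-hand side reduce to only two distinct values, namely $\varphi^*(\pf_n,\ch_n)=\varphi^*(\pf_n+1,\ch_n)=\psi(\ch_n)$ and $\varphi^*(\pf_n-1,\ch_n+1)=\varphi^*(\pf_n,\ch_n+1)=\psi(\ch_n+1)$. Collecting the coefficients of $\psi(\ch_n)$ gives $\tfrac{2}{n}\pf_n+\tfrac{2}{n}(\ch_n-\pf_n)=\tfrac{2}{n}\ch_n$, and collecting those of $\psi(\ch_n+1)$ gives $\tfrac{1}{n}\pf_n+\tfrac{1}{n}(n-\pf_n-2\ch_n)=\tfrac{1}{n}(n-2\ch_n)$. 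In both cases the dependence on $\pf_n$ cancels, which yields precisely~\eqref{eq:ch:yule:functional}.

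The argument is essentially mechanical, so there is no serious obstacle; the only point requiring care is to confirm that the pitchfork-dependent terms cancel exactly once like arguments of $\psi$ are grouped together. This cancellation is the structural reason that a clean univariate recursion for the cherry marginal exists at all, and it is the exact analogue of the corresponding simplification in the PDA setting, which is why the proof can be obtained by transcribing that of Proposition~\ref{prop:pda_cherry_arbitrary_function} with the edge-count $2n-3$ replaced by the pendant-edge count $n$.
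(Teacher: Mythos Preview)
Your proposal is correct and follows exactly the approach the paper intends: the paper omits the proof, stating it is similar to that of Proposition~\ref{prop:pda_cherry_arbitrary_function}, and your two-part argument (direct verification for $n=4,5$ via~\eqref{eq:ch:four:five} and~\eqref{eq:six:distribution}, then applying Theorem~\ref{thm:yhk_arbitrary_function} with $\varphi^*(x,y)=\psi(y)$ for $n\ge 6$) is precisely that transcription. Your explicit check that the $\pf_n$-dependent coefficients cancel is a helpful elaboration of what the paper leaves implicit.
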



Proposition \ref{prop:yhk_cherry_arbitrary_function} enables us to work out the central moments of the cherry distribution for the YHK model.

\begin{corollary} \label{cor:yhk_cherry_distribution}
We have $\var _{\YHK}(\ch_4)=0$,
\begin{equation} \label{eq:yhk_cherry_mean}
	\ee _{\YHK}(\ch_n)=\frac{n}{3}+\frac{4}{(n-1)(n-2)} = \frac{n}{3} + \frac{4}{n^2} + O(n^{-3})~\mbox{for $n\ge 4$}, ~~\mbox{and}~~
\end{equation}
\begin{equation} \label{eq:yhk_cherry_variance}
	\var _{\YHK}(\ch_n)=\frac{2n}{45}-\frac{4(n^2-3n+14)}{3(n-1)^2(n-2)^2} = \frac{2n}{45} - \frac{4}{3n^2} + O(n^{-3})~\mbox{for $n \ge 5$}.
\end{equation}

\end{corollary}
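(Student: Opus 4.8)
The plan is to mirror the proof of Corollary~\ref{cor:pda:ch}, using the functional recursion of Proposition~\ref{prop:yhk_cherry_arbitrary_function} as the engine and solving the resulting moment recurrences by the summation factor method, with the deterministic initial data supplied by~\eqref{eq:ch:four:five}. Since $\ch_4=\ch_5=2$ almost surely, we immediately get $\var_{\YHK}(\ch_4)=0$ together with $\ee_{\YHK}(\ch_4)=2$ and $\ee_{\YHK}(\ch_4^2)=\ee_{\YHK}(\ch_5^2)=4$, which will serve as base cases.

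First, for the mean I would substitute $\psi(x)=x$ into~\eqref{eq:ch:yule:functional}. Expanding $2\ch_n\cdot\ch_n+(n-2\ch_n)(\ch_n+1)$, the quadratic terms cancel and one is left with the clean first-order recurrence $\ee_{\YHK}(\ch_{n+1})=\frac{n-2}{n}\,\ee_{\YHK}(\ch_n)+1$ for $n\ge4$. I would solve this with summation factor $s_n=(n-1)(n-2)$, which satisfies $s_{n+1}\,\frac{n-2}{n}=s_n$; setting $F(n)=(n-1)(n-2)\,\ee_{\YHK}(\ch_n)$ turns the recurrence into $F(n+1)=F(n)+n(n-1)$. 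Summing via the identity $\sum_{k=1}^m k(k-1)=\tfrac13(m-1)m(m+1)$ (a consequence of~\eqref{eq:telescopic_sum}) together with $F(4)=12$ yields $F(n)=\tfrac13(n-2)(n-1)n+4$, and dividing by $(n-1)(n-2)$ gives~\eqref{eq:yhk_cherry_mean}.

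For the variance I would substitute $\psi(x)=x^2$ into~\eqref{eq:ch:yule:functional}. Expanding $2\ch_n^3+(n-2\ch_n)(\ch_n+1)^2$, the cubic terms again cancel, leaving $\ee_{\YHK}(\ch_{n+1}^2)=\frac{n-4}{n}\,\ee_{\YHK}(\ch_n^2)+\frac{2(n-1)}{n}\,\ee_{\YHK}(\ch_n)+1$ for $n\ge4$, into which I would feed the mean~\eqref{eq:yhk_cherry_mean}. Now the summation factor is $s_n=(n-1)(n-2)(n-3)(n-4)$, so that $s_{n+1}\,\frac{n-4}{n}=s_n$; note $s_4=0$, so $G(n):=s_n\,\ee_{\YHK}(\ch_n^2)$ conveniently has $G(4)=0$ and the recurrence collapses to $G(n+1)=G(n)+D(n)$ with $D(n)=\tfrac23 n(n-1)^2(n-2)(n-3)+n(n-1)(n-2)(n-3)+8(n-1)(n-3)$. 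Summing $G(n)=\sum_{k=4}^{n-1}D(k)$ and dividing by $s_n$ recovers a closed form for $\ee_{\YHK}(\ch_n^2)$, after which $\var_{\YHK}(\ch_n)=\ee_{\YHK}(\ch_n^2)-\ee_{\YHK}(\ch_n)^2$ produces~\eqref{eq:yhk_cherry_variance}.

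The main obstacle is this second-moment step: $D(n)$ is a genuine degree-five polynomial, so carrying out the summation—most cleanly by rewriting $D(n)$ in a rising- (or falling-) factorial basis and applying~\eqref{eq:telescopic_sum}—and then combining the result with $-\ee_{\YHK}(\ch_n)^2$ so that the leading $\tfrac{n^2}{9}$ terms cancel and the remaining lower-order pieces collapse to exactly $-\frac{4(n^2-3n+14)}{3(n-1)^2(n-2)^2}$ is where essentially all the bookkeeping lives. I would guard against algebraic slips by checking the final expression at $n=5$, where it must return $0$ in agreement with $\ch_5=2$, and by confirming the $O(n^{-3})$ expansions stated in~\eqref{eq:yhk_cherry_mean} and~\eqref{eq:yhk_cherry_variance}.
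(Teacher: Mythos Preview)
Your proposal is correct and follows essentially the same approach as the paper: both apply Proposition~\ref{prop:yhk_cherry_arbitrary_function} with $\psi(x)=x$ and then $\psi(x)=x^2$, solve the resulting recurrences by the summation factor method with the same factors (up to index shift), and use the deterministic initial data from~\eqref{eq:ch:four:five}. The paper simply carries out explicitly the rising-factorial decomposition and telescoping for the second-moment sum that you flag as the main bookkeeping step.
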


\begin{proof}
By~\eqref{eq:ch:four:five} we have $\ee _{\YHK}(\ch_n)=2$ and $\var _{\YHK}(\ch_n)=0$ for $n\in\{4,5\}$. Hence \eqref{eq:yhk_cherry_mean} holds for $n\in \{4,5\}$ and~\eqref{eq:yhk_cherry_variance} holds for $n=5$.

Substituting $\psi(x)=x$ in~(\ref{eq:ch:yule:functional})  shows that
\begin{equation}
\label{pf:eq:yhk:ch:recur}
    \ee _{\YHK}(\ch_{n+1})=\frac{1}{n}\ee _{\YHK}\big[ 2\ch^2_n+(n-2\ch_n)(\ch_n+1)\big]
    =1+\frac{n-2}{n}\ee _{\YHK}(\ch_n)~~\mbox{for $n\ge 4$}.
\end{equation}
\noindent
This recurrence can be solved by the summation factor method. 
First, we multiply both sides of~\eqref{pf:eq:yhk:ch:recur} by \twu{the} summation factor
$s_n=(n-1)^{\rf{2}}$. Let $f(n) = s_{n-1}\ee_{\YHK}(\ch_n)=(n-2)^{\rf{2}} \, \ee_{\YHK}(\ch_n)$. Substituting $n$ with $n - 1$ in~\eqref{pf:eq:yhk:ch:recur} shows that for $n\ge 5$ we have
\begin{equation*}
	f(n) = (n-2)^{\rf{2}} + f(n-1)=f(4) + \sum_{k=3}^{n-2} k^{\rf{2}} = 4 + \sum_{k=1}^{n-2} k^{\rf{2}} = 4 + \frac{n(n - 1)(n - 2)}{3},
\end{equation*}
from which~(\ref{eq:yhk_cherry_mean}) follows. Here the third equality follows from $f(4)=12$ and the fourth equality follows from~(\ref{eq:telescopic_sum}).

\rev{Applying Proposition~\ref{prop:yhk_cherry_arbitrary_function} with $\psi(x)=x^2$ and using~(\ref{eq:yhk_cherry_mean})}, we have
\begin{equation}
\label{pf:eq:yhk:ch:square:recur}
	\ee _{\YHK}(\ch_{n+1}^2) = \frac{n - 4}{n} \ee _{\YHK} (\ch_n^2)+ \frac{2n + 1}{3} + \frac{8}{n (n - 2)} \quad~\mbox{\twu{for} $n \ge 4$.}
\end{equation}
Consider \twu{the} summation factor $s^*_n=(n-3)^{\rf{4}}$ and let $f^*(n):= (n-4)^{\rf{4}} \ee_{\YHK} (B_n^2)$. Using~\eqref{pf:eq:yhk:ch:square:recur}, by $f^*(4)=0$ and~(\ref{eq:telescopic_sum}) we have
\begin{eqnarray*}
    f^*(n)
    &=& f^*(n-1)+\frac{(2n-1)(n-4)^{(4)}}{3}+8(n-2)(n-4)\\
    &=& \frac{1}{3} \sum_{k=5}^n (2k-1) (k-4)^{\rf{4}}  + 8 \sum_{k=5}^n (k-2)(k-4) \\
    &=& \frac{2}{3} \sum_{k=5}^n (k-4)^{\rf{5}} - \frac{1}{3} \sum_{k=5}^n (k-4)^{\rf{4}} + 8 \sum_{k=5}^n (k-3)^{\rf{2}} - 8 \sum_{k=5}^n (k-2) \\
    &=& \frac{(5n+2) (n-4)^{\rf{5}} }{45}  + \frac{8 (n-3)^{\rf{3}}}{3}   - 4 (n-2)^{\rf{2}} +8
\end{eqnarray*}
for $n\ge 5$.  This implies
\begin{equation}
\label{eq:yhk:mean:ch:square}
    \ee_{\YHK}(B_n^2)
    = \frac{n(5n+2)}{45}+\frac{4(2n-1)}{3(n-1)(n-2)}~\mbox{for $n\ge 5$},
\end{equation}
from which~(\ref{eq:yhk_cherry_variance}) follows.
\epf
\end{proof}



\rev{
The following theorem also follows directly from Proposition~\ref{prop:yhk_cherry_arbitrary_function}, whose proof is omitted as it is similar to that of Theorem~\ref{thm:pda_cherry_probability_distribution}. However, Theorem~\ref{thm:pda_cherry_probability_distribution} \twu{provides} a closed-form formula in~\eqref{eq:pda_cherry_pd} for the distribution of $\ch_n$ under the PDA model; \twu{whether} such a closed-form formula exists under the YHK model remains to be seen.
}

\begin{theorem}
 \label{thm:yule_cherry_probability_distribution}
For $n\ge 4$ and $1<k<n$ we have
\begin{equation} \label{eq:yhk_cherry_probability}
    \pp _{\YHK} (\ch_{n+1} = k) = \frac{2k}{n} \pp _{\YHK} (\ch_{n} = k) + \frac{n - 2k + 2}{n}\pp _{\YHK} (\ch_{n} = k - 1).
\end{equation}
\end{theorem}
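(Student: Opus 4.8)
The plan is to derive~\eqref{eq:yhk_cherry_probability} directly from the functional recursion in Proposition~\ref{prop:yhk_cherry_arbitrary_function} by choosing $\psi$ to be the appropriate indicator function, exactly mirroring the proof of Theorem~\ref{thm:pda_cherry_probability_distribution}. First I would fix integers $n\ge 4$ and $1<k<n$, and define the indicator $I_k:\mathbb{N}\to\{0,1\}$ by $I_k(x)=1$ if $x=k$ and $I_k(x)=0$ otherwise. Taking $\psi=I_k$ in~\eqref{eq:ch:yule:functional} gives
\begin{equation*}
\ee_{\YHK}\,I_k(\ch_{n+1})=\frac{1}{n}\ee_{\YHK}\big[2\ch_n\,I_k(\ch_n)+(n-2\ch_n)\,I_k(\ch_n+1)\big].
\end{equation*}

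The next step is simply to identify each expectation as a probability. On the left, $\ee_{\YHK}\,I_k(\ch_{n+1})=\pp_{\YHK}(\ch_{n+1}=k)$. For the two terms on the right, I would use the sifting property of the indicators: on the event $\{\ch_n=k\}$ the factor $2\ch_n$ equals the constant $2k$, so
$$\ee_{\YHK}[2\ch_n\,I_k(\ch_n)]=2k\,\pp_{\YHK}(\ch_n=k);$$
and on the event $\{\ch_n+1=k\}$, i.e. $\{\ch_n=k-1\}$, the factor $(n-2\ch_n)$ equals the constant $n-2(k-1)=n-2k+2$, so
$$\ee_{\YHK}[(n-2\ch_n)\,I_k(\ch_n+1)]=(n-2k+2)\,\pp_{\YHK}(\ch_n=k-1).$$
Substituting these three evaluations into the displayed recursion yields~\eqref{eq:yhk_cherry_probability} immediately.

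There is essentially no obstacle here: the entire content is the observation that an indicator collapses the two expectations into weighted probabilities, with the linear coefficients $2\ch_n$ and $n-2\ch_n$ frozen at their values on the respective support points. The only point requiring a word of care is the range of validity, since~\eqref{eq:ch:yule:functional} is asserted for $n\ge 4$ and we want~\eqref{eq:yhk_cherry_probability} for $1<k<n$; restricting $k$ to this range ensures that the boundary probabilities (such as $\pp_{\YHK}(\ch_n=1)=0$) are handled consistently and no spurious terms arise. As the paper itself notes, this argument is the verbatim analogue of the PDA computation, differing only in that the coefficient $2\ch_n/n$ replaces the PDA coefficient $(n+2\ch_n-3)/(2n-3)$, so I would present it tersely and defer to the proof of Theorem~\ref{thm:pda_cherry_probability_distribution} for the routine verification.
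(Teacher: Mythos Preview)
Your proposal is correct and follows exactly the approach the paper indicates: applying Proposition~\ref{prop:yhk_cherry_arbitrary_function} with $\psi=I_k$ and reading off the probabilities, just as in the proof of Theorem~\ref{thm:pda_cherry_probability_distribution}. The paper in fact omits the proof for precisely this reason, so your write-up is an accurate expansion of what is intended.
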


We complete this section by using Theorem~\ref{thm:yhk_arbitrary_function}
to compute  the mean and variance of $\pf_n$, as well as the covariance and correlation between $\pf_n$ and $\ch_n$, under the YHK model.

\begin{proposition}
\label{prop:yhk:pf}
We have 
\twu{$\var _{\YHK}(\pf_{6})=16/25,$} 
\begin{eqnarray}
    \ee _{\YHK}(\pf_n) &=& \frac{n}{6} + \frac{4(2n - 3)}{(n - 1)(n - 2)(n - 3)} \,= \frac{n}{6} + \frac{8}{n^2} + O(n^{-3})~~\mbox{for $n\ge 6$}, \label{eq:yhk_pitchfork_mean} \\
    \cov_{\YHK}(\pf_n,\ch_n) &=&
    -\frac{n}{45} - \frac{4(n^3-6n^2+35n-42)}{3(n-1)^2(n-2)^2(n-3)}\,
    = -\frac{n}{45}-\frac{4}{3n^2} + O(n^{-3})~~\mbox{for $n\ge 6$, and}
    \label{eq:yhk_covariance}\\
    \var _{\YHK}(\pf_n) &=&
    \frac{23n}{420}-\frac{16(2n-3)^2}{(n-1)^2(n-2)^2(n-3)^2}\,
    = \frac{23n}{420}-\frac{64}{n^4} + O(n^{-5})~~\mbox{for $n\ge 7$}.
    \label{eq:yhk_pitchfork_variance}
\end{eqnarray}
Moreover, $\rho_{\YHK}(\pf_6, \ch_6)=\rho_{\YHK}(\pf_7, \ch_7)=-1$, and the correlation coefficient sequence $\{\rho_{\YHK}(\pf_n, \ch_n)\}_{n\ge 7}$ increases monotonically in $n$ towards the constant $-\sqrt{14/69}$.
\end{proposition}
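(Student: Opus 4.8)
The plan is to establish Proposition~\ref{prop:yhk:pf} by following the same summation-factor machinery used in Corollary~\ref{cor:pda:ch} and Proposition~\ref{prop:pda:pf}, but now driven by the YHK functional recursion in Theorem~\ref{thm:yhk_arbitrary_function} and seeded by the $n=6$ initial values coming from~\eqref{eq:six:distribution}. First I would record the base quantities at $n=6$: from~\eqref{eq:six:distribution} one computes $\ee_{\YHK}(\pf_6)$, $\ee_{\YHK}(\pf_6^2)$, $\ee_{\YHK}(\pf_6\ch_6)$, and hence $\var_{\YHK}(\pf_6)=16/25$ together with the stated covariance. Then, substituting $\varphi(x,y)=x$ into Theorem~\ref{thm:yhk_arbitrary_function} and using the cherry mean~\eqref{eq:yhk_cherry_mean} yields a first-order recurrence for $\ee_{\YHK}(\pf_{n+1})$ in terms of $\ee_{\YHK}(\pf_n)$ and $\ee_{\YHK}(\ch_n)$; solving it with the summation factor $s_n=(n-3)^{\rf{3}}$ (the YHK analogue of the PDA factor, matching the coefficient $\tfrac{2}{n}$ appearing on $\pf_n$) and the initial value $\ee_{\YHK}(\pf_6)$ should produce~\eqref{eq:yhk_pitchfork_mean} after applying~\eqref{eq:telescopic_sum}.

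Next I would substitute $\varphi(x,y)=xy$ to obtain a recurrence for $\ee_{\YHK}(\pf_{n+1}\ch_{n+1})$ whose inhomogeneous terms involve $\ee_{\YHK}(\pf_n)$ and $\ee_{\YHK}(\ch_n^2)$; these are supplied by~\eqref{eq:yhk_pitchfork_mean} and~\eqref{eq:yhk:mean:ch:square}. Solving this with the appropriate summation factor and the base value $\ee_{\YHK}(\pf_6\ch_6)$ gives a closed form for $\ee_{\YHK}(\pf_n\ch_n)$, from which~\eqref{eq:yhk_covariance} follows by subtracting $\ee_{\YHK}(\pf_n)\ee_{\YHK}(\ch_n)$. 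Finally, substituting $\varphi(x,y)=x^2$ produces a recurrence for $\ee_{\YHK}(\pf_{n+1}^2)$ in terms of $\ee_{\YHK}(\pf_n^2)$, $\ee_{\YHK}(\pf_n\ch_n)$, $\ee_{\YHK}(\pf_n)$, and $\ee_{\YHK}(\ch_n)$; solving with the corresponding summation factor and the base value $\ee_{\YHK}(\pf_6^2)$ yields $\ee_{\YHK}(\pf_n^2)$ and hence~\eqref{eq:yhk_pitchfork_variance}. Throughout I would state each recurrence, identify the summation factor, and invoke~\eqref{eq:telescopic_sum} for the telescoping sums, keeping routine algebra terse as in the PDA proofs.

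For the correlation claim, I would note $\rho_{\YHK}(\pf_6,\ch_6)=\rho_{\YHK}(\pf_7,\ch_7)=-1$ since at $n=6,7$ the pair is supported on a line (the same degeneracy exploited in the PDA case), and that the leading asymptotics $\ee_{\YHK}(\pf_n)\sim n/6$, $\cov_{\YHK}\sim -n/45$, $\var_{\YHK}(\pf_n)\sim 23n/420$, $\var_{\YHK}(\ch_n)\sim 2n/45$ give $\rho_{\YHK}(\pf_n,\ch_n)\to -(1/45)/\sqrt{(23/420)(2/45)}=-\sqrt{14/69}$ as the limiting constant. To establish monotonic increase of $\{\rho_{\YHK}(\pf_n,\ch_n)\}_{n\ge 7}$, I would mirror the PDA argument: reduce to showing a ratio $R(n)=\rho^2_{\YHK}(\pf_n,\ch_n)/\rho^2_{\YHK}(\pf_{n+1},\ch_{n+1})$ exceeds $1$, express $R(n)$ as a ratio of explicit polynomials using the exact formulas~\eqref{eq:yhk_pitchfork_mean}--\eqref{eq:yhk_pitchfork_variance}, verify a finite range of small $n$ by direct computation, and for large $n$ show the numerator-minus-denominator polynomial is positive by exhibiting it in a manifestly positive form (a product of a positive linear factor with a polynomial having positive coefficients after a shift, as was done with the factor $(n-6)$ in Proposition~\ref{prop:pda:pf}).

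The main obstacle I anticipate is the final monotonicity step rather than the moment computations: because $\var_{\YHK}(\ch_n)$ and $\var_{\YHK}(\pf_n)$ are themselves ratios of high-degree polynomials, the difference $R_1(n)-R_2(n)$ will be a polynomial of substantial degree, and coaxing it into a visibly positive form (valid for all $n$ above some threshold, with the remaining small cases checked numerically) is the delicate part. The moment recurrences, by contrast, are routine once the correct summation factors are identified, the only care being to carry the rational correction terms $4/((n-1)(n-2))$ etc. accurately so that the exact—not merely asymptotic—formulas in~\eqref{eq:yhk_pitchfork_mean}--\eqref{eq:yhk_pitchfork_variance} emerge.
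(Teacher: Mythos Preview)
Your proposal is correct and follows essentially the same route as the paper: initial values at $n=6$ from~\eqref{eq:six:distribution}, then successive substitutions $\varphi(x,y)=x,\ xy,\ x^2$ into Theorem~\ref{thm:yhk_arbitrary_function}, each solved by a summation factor and~\eqref{eq:telescopic_sum}, and finally the correlation monotonicity handled exactly as in Proposition~\ref{prop:pda:pf} via the ratio $R(n)$ of squared correlations, a finite numerical check, and a positivity argument for the numerator-minus-denominator polynomial. One small slip: the coefficient of $\ee_{\YHK}(\pf_n)$ in the recurrence from $\varphi(x,y)=x$ is $(n-3)/n$, not $2/n$ (the $2/n$ multiplies $\ee_{\YHK}(\ch_n)$), so the summation factor is $(n-2)^{\rf{3}}$ rather than $(n-3)^{\rf{3}}$; this is only an indexing matter and does not affect the method.
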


\begin{proof}
The proposition holds for $n=6$ because by~\eqref{eq:six:distribution} we have
\begin{equation}
\label{pf:prop:yule:pf:initial}
\ee _{\YHK}(\pf_{6})=8/5,  \quad   \cov _{\YHK}(\pf_{6}, \ch_{6}) =-8/25,
\quad~~\mbox{and}~~\quad
\var _{\YHK}(\pf_{6})=16/25.
\end{equation}
As the remainder \twu{of the} proof is similar to that of Corollary~\ref{cor:yhk_cherry_distribution}, we only outline the main differences here.

Applying Theorem~\ref{thm:yhk_arbitrary_function} \rev{with} $\varphi(x,y)=x$ and using Corollary~\ref{cor:yhk_cherry_distribution}, we have
\begin{align*}
	\ee _{\YHK}(\pf_{n+1})&= \frac{n-3}{n}\ee _{\YHK}(A_n)+\frac{2}{3}+\frac{8}{n(n-1)(n-2)}~~\mbox{for $n\ge 6$}.
\end{align*}
Using the summation factor $(n - 2)^{\rf{3}}$ and the initial value \twu{$\ee _{\YHK}(A_6)=8/5$} 
in~\eqref{pf:prop:yule:pf:initial}, we can solve the last recursion to obtain~(\ref{eq:yhk_pitchfork_mean}).

Next, applying Theorem~\ref{thm:yhk_arbitrary_function} \rev{with} $\varphi(x, y) = xy$ gives us
\begin{align}
    \ee_{\YHK}(\pf_{n+1}\ch_{n+1})
    &=\frac{n-5}{n} \ee_{\YHK} (\pf_n\ch_n)+\frac{n-1}{n} \ee_{\YHK}(\pf_n)+\frac{2}{n}\ee_{\YHK}(\ch_n^2) \nonumber \\
    &=\frac{n-5}{n} \ee_{\YHK}(\pf_n\ch_n)+\frac{35n-7}{90}+\frac{4(10n^2-29n+15)}{3(n-3)^{\rf{4}}}~~\mbox{for $n\ge 6$.} \label{eq:yhk:mixed:recur}
\end{align}
\noindent
Here the second equality follows from~\eqref{eq:yhk:mean:ch:square} and~\eqref{eq:yhk_pitchfork_mean}. Consider the summation factor  $s_n=(n-4)^{\rf{5}}$ and let $f(n)=s_{n-1} \ee_{\YHK}(\pf_n\ch_n)$. Since $f(6)=384$, by~\eqref{eq:yhk:mixed:recur} and~\eqref{eq:telescopic_sum} we have
\begin{align*}
    f(n) &= f(6) + \sum_{k=2}^{n-5} \left(\frac{7k^{\rf{6}}}{18}-\frac{7k^{\rf{5}}}{15}+\frac{40k^{\rf{3}}}{3}+28k^{\rf{2}}+24k^{\rf{1}} \right) \\
    &=\frac{(n-5)^{\rf{7}}}{18}-\frac{7(n-5)^{\rf{6}}}{90}+\frac{10(n-5)^{\rf{4}}}{3}+\frac{28(n-5)^{\rf{3}}}{3}+12(n-5)^{\rf{2}}
\end{align*}
for $n \geq 7$. This gives us
\begin{equation} \label{eq:yhk_cherry_pitchfork_product_mean}
	\ee_{\YHK}(\pf_n\ch_n) = {\frac {5n^{4} - 27n^{3} + 40n^{2} + 288n - 360}{90 (n - 3) ( n-2)}}\,~\mbox{for $n \geq 7$,}
\end{equation}
\noindent
from which~(\ref{eq:yhk_covariance}) follows. 

To obtain $\var _{\YHK}(\pf_n)$, applying Theorem~\ref{thm:yhk_arbitrary_function} \rev{with} $\varphi(x,y)=x^2$ we have
\begin{align}
	\ee_{\YHK}(\pf_{n+1}^2) &= \frac{n-6}{n} \ee_{\YHK} (\pf_n^2) - \frac{1}{n} \ee_{\YHK}(\pf_n) + \frac{2}{n}\ee_{\YHK}(\ch_n) + \frac{4}{n} \ee_{\YHK}(\pf_n \ch_n) \nonumber \\
    &= \frac{n-6}{n} \ee_{\YHK} (\pf_n^2) + \frac {20n^{5} - 83n^{4} - 2n^{3} + 1487n^{2} - 2862n + 360}{90n (n - 1) (n - 2) (n - 3) }~~\mbox{for $n\ge 6$}, \label{eq:yhk:pf:var:rec}
    \end{align}
\noindent
where the second \rev{equality} follows from~(\ref{eq:yhk_cherry_mean}), (\ref{eq:yhk_pitchfork_mean}),  and (\ref{eq:yhk_cherry_pitchfork_product_mean}). Using the  summation factor $(n - 6)^{\rf{6}}$ and the initial condition $\ee_{\YHK}(\pf_{6}^2) = 16/5$, we solve the recurrence in~\eqref{eq:yhk:pf:var:rec} to show that
\begin{equation*}
	\ee_{\YHK} (\pf_n^2) = \frac{n (35n^4 - 141n^3 - 29n^2 + 3909n - 5454)}{1260 (n - 3) (n - 2) (n - 1)}~\mbox{for $n\ge 7$},
\end{equation*}
\noindent
from which~(\ref{eq:yhk_pitchfork_variance}) follows. 

\twu{
To establish the last statement in the proposition, note that $\rho_{\YHK}(\pf_6, \ch_6)=\rho_{\YHK}(\pf_7, \ch_7)=-1$ clearly holds.} For simplicity let
\begin{eqnarray*}
f_c(n) &=& n^6 - 9n^5 + 31n^4 + 9n^3 - 320n^2 + 2088n - 2520, \\
g_a(n) &=& 23n(n-1)^2(n-2)^2(n-3)^2-420\times16(2n-3)^2,~\mbox{and}~ \\
g_b(n) &=& 2n(n-1)^2(n-2)^2-60(n^2-3n+14).
\end{eqnarray*}
Then we have
$$
\rho_\YHK(\pf_n,\ch_n)=\frac{\cov_\YHK(\pf_n,\ch_n)}{\sqrt{\var_\YHK(\pf_n)\var_\YHK(\ch_n)}}
=\frac{-\sqrt{84}f_c(n)}{3\sqrt{g_a(n)g_b(n)}},
$$
by which it follows that $\rho_\YHK(\pf_n,\ch_n)$ converges to $-\sqrt{14/69}$. Hence it remains to show that this sequence is decreasing. To this end it suffices to show that
$$
R(n)=\frac{\rho^2_\YHK(\pf_n,\ch_n)}{\rho^2_\YHK(\pf_{n+1},\ch_{n+1})}=\frac{f^2_c(n)g_a(n+1)g_b(n+1)}{f^2_c(n+1)g_a(n)g_b(n)}
$$
is greater than 1 for $n\ge 7$.  Since $R(n)>1$ holds for $7\le n \le 19$ by numerical computation, we may further assume $n\ge 20$.  Denote the numerator and denominator of $R(n)$ by $R_1(n)$ and $R_2(n)$, respectively. Then for $n\ge 20$ we have
{\small
\begin{eqnarray*}
R_1(n)-R_2(n)&=&
 60(n-1)^2[(345n - 6900)n^{17} + (78310n - 567730)n^{15} + (2579592n^2 - 5970866n - 6240646)n^{12}  \\
 && \quad + (119332554n - 613621681)n^{10} + (2240461678n - 7810653888)n^8 + (26499724352n - 84099771728)n^6  \\
 && \quad + (197227522272n - 276252591744)n^4 + 190463028480n^3 + (59732121600n - 201790310400)n + 73156608000]\\
 &>&345(n -20)n^{17} + 10(7831n - 56773)n^{15} + (2579592n^2 - 5970866n - 6240646)n^{12}  \\
 && \quad + (119332554n - 613621681)n^{10} + (2240461678n - 7810653888)n^8 + (26499724352n - 84099771728)n^6  \\
 && \quad + (197227522272n - 276252591744)n^4 + 12441600(4801n - 16219)n \\
&>&0. 
\end{eqnarray*}
}
\epf
\end{proof}

\section{Comparison of random models}
\label{sec:comparison}

In this section, we compare and contrast the properties of the cherry and pitchfork distributions under the PDA and the YHK models, \rev{in} both rooted and unrooted cases. To this end, for $n \geq 4$, let $\pf^*_n$ and $\ch^*_n$ be the random variables $\pf(T^*)$ and $\ch(T^*)$, respectively, for a random tree $T^*$ in $\rtsp_n$.

\subsection{Mean and Variance}

\rev{In} Table~\ref{tb:mv} we collect the means, variances, and covariances for pitchfork and cherry distributions under these two models, \rev{for both} rooted and unrooted trees. The entries for unrooted trees follow from Sections~\ref{sec:results_pda} and~\ref{sec:results_yhk}, while those for rooted trees follow from the relevant results in~\citet[Sections 3 and 4]{WuChoi16}.


 \begin{table}[h]
       \begin{tabular}{ | c | c | c| }
    \hline
     & PDA & YHK \\ \hline
  $\ee(\pf^*_n)$ & $\frac{n(n-1)(n-2)}{2(2n-3)(2n-5)}$ & $\frac{n}{6}$ \\ \hline
$\ee(\pf_n)$   & $\frac{n(n-1)(n-2)}{2(2n-5)(2n-7)}$ & $\frac{n}{6}+\frac{4(2n-3)}{(n-1)(n-2)(n-3)}$ \\    \hline
  $\ee(\ch^*_n)$ & $\frac{n(n-1)}{2(2n-3)}$ & $\frac{n}{3}$ \\ \hline
$\ee(\ch_n)$   & $\frac{n(n-1)}{2(2n-5)}$ & $\frac{n}{3}+\frac{4}{(n-1)(n-2)}$ \\
\hhline{|===|}
      $\var(\pf^*_n)$ &
  $\frac{3n(n-1)(n-2)(n-3)(4n^3-40n^2+123n-110)}{4(2n-3)^2(2n-5)^2(2n-7)(2n-9)}$
  & $\frac{23n}{420}$~ {\tiny ($n\ge 7$) } \\ \hline
$\var(\pf_n)$   &
  $\frac{3n(n-1)(n-2)(4n^4-76n^3+527n^2-1555n+1610)}{4(2n-5)^2(2n-7)^2(2n-9)(2n-11)}$
 &   $\frac{23n}{420}-\frac{16(2n-3)^2}{(n-1)^2(n-2)^2(n-3)^2}$~ {\tiny ($n\ge 7$) } \\    \hline
  $\var(\ch^*_n)$ & $\frac{n(n-1)(n-2)(n-3)}{2(2n-3)^2(2n-5)}$
  & $\frac{2n}{45}$~ {\tiny ($n\ge 5$)} \\ \hline
$\var(\ch_n)$   & $\frac{n(n-1)(n-4)(n-5)}{2(2n-5)^2(2n-7)}$ & $\frac{2n}{45} - \frac{4(n^2-3n+14)}{3(n-1)^2(n-2)^2}$ ~ {\tiny ($n\ge 5$)} \\
\hhline{|===|}
 $\cov(\pf^*_n,\ch^*_n)$ & $-\frac{n(n-1)(n-2)(n-3)}{2(2n-3)^2(2n-5)(2n-7)}$ &
 $-\frac{n}{45}$~ {\tiny ($n\ge 6$)} \\
\hline
$\cov(\pf_n,\ch_n)$ & $-\frac{3 n (n - 1) (n - 2) (n - 5)}{2 (2n - 5)^2 (2n - 7) (2n - 9)}$ &
$-\frac{n}{45} - \frac{4(n^3-6n^2+35n-42)}{3(n-1)^2(n-2)^2(n-3)}$ \\
\hline
  \end{tabular}
  \caption{The means, variances, and covariances of pitchfork distributions and cherry distributions for the YHK and PDA models. Note that other than the five formulas  where the range of $n$ is explicitly given in the brakets, we have $n\ge 6$ for the formulas on $\ee(\pf_n)$, $\var(\pf_n)$ and $\cov(\pf_n,\ch_n)$, and $n\ge 4$ for all the others.
  }
\label{tb:mv}
\end{table}

Using the entries in \rev{Table~\ref{tb:mv}}, we show below that a tree generated under the YHK model typically contains more cherries and more pitchforks than  \rev{one} under the PDA model, which is in line with
\rev{the numerical results seen in Fig.~\ref{fig:probability_contours}}.

 \begin{proposition}
{\rm (i):} For $n\ge 6$ and $Y_n\in \{\pf^*_n,\ch^*_n,\ch_n\}$, we have
\begin{equation}
\ee_\PDA(Y_n)< \ee_\YHK(Y_n) < \frac{4}{3}\ee_\PDA(Y_n).
\label{eq:yu:compare}
\end{equation}
Furthermore, we have $\ee_\PDA(\pf_n)< \ee_\YHK(\pf_n)$ for $n\ge 12$, and
$\ee_\YHK(\pf_n) < \frac{4}{3}\ee_\PDA(\pf_n)$ for $n\ge 6$.  \\
{\rm (ii):} As $n \to \infty$, we have
\begin{equation}
	\ee (\pf_n) \sim \frac{1}{2} \ee (\ch_n)\quad \mbox{and}\quad \ee (\pf^*_n) \sim \frac{1}{2} \ee (\ch^*_n)
	\label{ratio:pf:ch}
\end{equation}
\noindent
 under both the YHK and the PDA models.
 \end{proposition}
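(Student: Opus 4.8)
The plan is to prove both parts directly from the formulas collected in Table~\ref{tb:mv}, treating the PDA and YHK models separately where necessary, and handling the four choices of $Y_n$ in part~(i) one at a time since they have different closed forms. For part~(i), the heart of the matter is establishing, for each relevant random variable, the two-sided inequality $\ee_\PDA(Y_n) < \ee_\YHK(Y_n) < \frac{4}{3}\ee_\PDA(Y_n)$; equivalently, I would show that the ratio $\ee_\YHK(Y_n)/\ee_\PDA(Y_n)$ lies strictly between $1$ and $4/3$ for the stated range of $n$.

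First I would dispatch the three variables $Y_n \in \{\pf^*_n, \ch^*_n, \ch_n\}$, for which the inequality is claimed to hold already at $n \ge 6$. Consider for instance $Y_n = \ch_n$. From the table, $\ee_\PDA(\ch_n) = \frac{n(n-1)}{2(2n-5)}$ and $\ee_\YHK(\ch_n) = \frac{n}{3} + \frac{4}{(n-1)(n-2)}$. The lower bound $\ee_\PDA < \ee_\YHK$ and the upper bound $\ee_\YHK < \frac{4}{3}\ee_\PDA$ each reduce, after clearing denominators, to a polynomial inequality in $n$; I would verify that the resulting polynomial is positive for $n \ge 6$ by collecting it in the form $\sum c_k (n-6)^k$ with nonnegative coefficients, or simply by checking the finitely many small cases and then confirming positivity of the dominant terms. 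The cases $Y_n = \ch^*_n$ (where $\ee_\PDA(\ch^*_n) = \frac{n(n-1)}{2(2n-3)}$ and $\ee_\YHK(\ch^*_n) = \frac{n}{3}$, so the comparison is purely rational) and $Y_n = \pf^*_n$ are entirely analogous and in fact cleaner, since the YHK means $\frac{n}{3}$ and $\frac{n}{6}$ carry no correction terms. For the pitchfork case $Y_n = \pf_n$, the statement is weaker: the lower bound holds only for $n \ge 12$, so I would clear denominators in $\ee_\PDA(\pf_n) < \ee_\YHK(\pf_n)$ and locate exactly where the resulting polynomial changes sign, confirming it is positive precisely from $n = 12$ onward (and checking the small cases $6 \le n \le 11$ separately to see the inequality can fail there), while the upper bound $\ee_\YHK(\pf_n) < \frac{4}{3}\ee_\PDA(\pf_n)$ again reduces to a polynomial positive for all $n \ge 6$.

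For part~(ii), the asymptotic relations $\ee(\pf_n) \sim \frac{1}{2}\ee(\ch_n)$ and $\ee(\pf^*_n) \sim \frac{1}{2}\ee(\ch^*_n)$ under each model follow immediately from the leading-order terms in the table. Under YHK, $\ee(\pf_n) = \frac{n}{6} + O(n^{-2})$ and $\ee(\ch_n) = \frac{n}{3} + O(n^{-2})$, so the ratio tends to $\frac{1/6}{1/3} = \frac{1}{2}$; the rooted versions $\frac{n}{6}$ and $\frac{n}{3}$ give the ratio exactly. Under PDA, $\ee(\pf_n) \sim \frac{n}{8}$ and $\ee(\ch_n) \sim \frac{n}{4}$ (reading off the leading terms $\frac{n(n-1)(n-2)}{2(2n-5)(2n-7)} \sim \frac{n}{8}$ and $\frac{n(n-1)}{2(2n-5)} \sim \frac{n}{4}$), again yielding ratio $\frac{1}{2}$, and the rooted case is identical in leading order. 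So part~(ii) is a short computation of limits of ratios of the leading asymptotic coefficients.

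The main obstacle is not conceptual but the bookkeeping in part~(i): each of the four inequalities, once denominators are cleared, becomes a polynomial inequality of moderate degree in $n$, and the pitchfork lower bound in particular has its threshold at exactly $n = 12$, so I must be careful to pin down the sign change precisely rather than merely establish eventual positivity. I expect the cleanest presentation to factor out the common positive prefactors (such as $n(n-1)$) first, so that the inequalities reduce to comparing low-degree polynomials, and then to exhibit each difference as a polynomial with manifestly positive coefficients after the substitution $m = n - 6$ (or $m = n - 12$ for the pitchfork lower bound), which makes positivity over the required range transparent without delicate estimates.
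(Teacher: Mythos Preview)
Your proposal is correct and follows essentially the same route as the paper: both parts are proved by direct inspection of the explicit formulas in Table~\ref{tb:mv}, reducing part~(i) to elementary rational/polynomial inequalities in $n$ and reading off the leading asymptotics for part~(ii). The only execution difference is that the paper shortcuts several of the comparisons via simple intermediate bounds (e.g.\ sandwiching $\ee_\YHK(\ch_n)$ between $n/3$ and $(n+1)/3$) rather than fully clearing denominators, and it cites \citet[Proposition~6]{WuChoi16} for the rooted cases $\pf^*_n,\ch^*_n$; your more mechanical plan of expanding and checking signs after the shift $m=n-6$ (or $m=n-12$) works equally well.
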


 \begin{proof}
{\rm (i) } When $Y\in \{\pf^*_n,\ch^*_n\}$, \twu{the inequalities in~(\ref{eq:yu:compare})} follow from~\citet[Proposition 6]{WuChoi16}.

 Next,  \twu{the inequalities in~(\ref{eq:yu:compare})}  hold for the unrooted cherry distributions because we have $\ee_\PDA(\ch_6)=\frac{15}{7}<\frac{11}{5}=\ee_\YHK(\ch_6)$,
 $$
 \ee_\PDA(\ch_n)=\frac{n(n-1)}{2(2n-5)} \le \frac{n}{3} < \ee_\YHK(\ch_n)
~\mbox{for $n\ge 7$,}
 $$
 and
 $$
  \ee_\YHK(\ch_n)< \frac{n}{3}+\frac{1}{3}=\frac{n+1}{3} <\frac{2n(n-1)}{3(2n-5)}
  =\frac{4}{3}\ee_\PDA(\ch_n)~\mbox{for $n\ge 6$}.
  $$

For the unrooted pitchfork distributions, we have
 $$
  \ee_\YHK(\pf_n)-\ee_\PDA(\pf_n)=\frac{4(2n-3)}{(n-1)(n-2)(n-3)}+\frac{n[(n-7)(n-8)-27]}{6(2n-5)(2n-7)}>0
 $$
for $n\ge 12$ (but note that $\ee_\YHK(\pf_{11})<\ee_\PDA(\pf_{11})$). Moreover, it is straightforward to check that $\ee_\YHK(\pf_n) < 4\ee_\PDA(\pf_n)/3$ holds for
$n\in \{6,7\}$, and we also have
 $$
   \ee_\YHK(\pf_n) < \frac{n+2}{6} <\frac{2n(n-1)(n-2)}{3(2n-5)(2n-7)}
     =\frac{4}{3}\ee_\PDA(\pf_n)
     \quad \mbox{for $n\ge 8$.}
 $$

 {\rm (ii) }  \rev{Note that}~(\ref{ratio:pf:ch}) holds for the YHK model
 \rev{because
 $\ee_\YHK(\pf^*_n)=\ee_\YHK(\ch^*_n)/2$ for} $n\ge 4$ and
\twu{
 $$
 \ee_\YHK(\pf_n) \sim  \ee_\YHK(\pf^*_n)
~\quad~\mbox{and}~\quad~
 \ee_\YHK(\ch_n) \sim   \ee_\YHK(\ch^*_n).
 $$
}
 \rev{Furthermore}, we have~(\ref{ratio:pf:ch}) for the PDA model because
  $$
 \lim_{n\to \infty}\frac{\ee_\PDA(\pf_n)}{\ee_\PDA(\ch_n)}
=\frac{1}{2},~\quad~
\twu{
 \ee_\PDA(\pf_n) \sim   \ee_\PDA(\pf^*_n),
~\quad~\mbox{and}~\quad~
 \ee_\PDA(\ch_n) \sim   \ee_\PDA(\ch^*_n).  
}
 $$
\epf
 \end{proof}

 \medskip
Next, we compare the means of the cherry and pitchfork distributions between rooted trees and unrooted trees. Note that the limiting difference between the two models \rev{is}  different, \twu{which is in line with the result on cherry distributions reported by~\citet[Lemma 6]{McKenzie2000}.}

 \begin{proposition}
 \label{prop:mean:comp}
For $n\ge 6$ and $Y_n\in \{\pf_n,\ch_n\}$, we have
\begin{equation}
\ee_\YHK(Y_n^*)< \ee_\YHK(Y_n)\le \ee_\YHK(Y_n^*)+\frac{3}{5}~\quad~\mbox{and}~\quad~
\ee_\PDA(Y_n^*)+\frac{1}{4}< \ee_\PDA(Y_n)\le \ee_\PDA(Y_n^*)+\frac{16}{21}.
\end{equation}
Moreover, $\{\ee(Y_n)-\ee(Y_n^*)\}_{n\ge 6}$ is a  strictly decreasing sequence whose limit is $0$ under the YHK model, and $1/4$ under the PDA model.
\end{proposition}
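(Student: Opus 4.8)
The plan is to reduce the entire statement to the explicit mean formulas already collected in Table~\ref{tb:mv} and to analyse the four difference sequences $d(n)=\ee(Y_n)-\ee(Y_n^*)$ one rational function at a time. First I would subtract the rooted mean from the unrooted mean in each of the four cases; after combining fractions these collapse to
\begin{align*}
\ee_\YHK(\ch_n) - \ee_\YHK(\ch_n^*) &= \frac{4}{(n-1)(n-2)}, &
\ee_\YHK(\pf_n) - \ee_\YHK(\pf_n^*) &= \frac{4(2n-3)}{(n-1)(n-2)(n-3)}, \\
\ee_\PDA(\ch_n) - \ee_\PDA(\ch_n^*) &= \frac{n(n-1)}{(2n-3)(2n-5)}, &
\ee_\PDA(\pf_n) - \ee_\PDA(\pf_n^*) &= \frac{2n(n-1)(n-2)}{(2n-3)(2n-5)(2n-7)}.
\end{align*}
Each is a single rational function, so the limits are immediate: the two YHK differences have numerator degree strictly below denominator degree and hence tend to $0$, while a leading-coefficient comparison ($\tfrac{n^2}{4n^2}$ and $\tfrac{2n^3}{8n^3}$) shows that both PDA differences tend to $1/4$.

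For the lower bounds, the YHK differences are manifestly positive for $n\ge 6$ because every factor is positive, which gives $\ee_\YHK(Y_n^*) < \ee_\YHK(Y_n)$. For the strict PDA bound $\ee_\PDA(Y_n^*) + \tfrac14 < \ee_\PDA(Y_n)$ I would clear denominators: the cherry case reduces to $4n(n-1) > (2n-3)(2n-5)$, i.e.\ $12n > 15$, and the pitchfork case to $8n(n-1)(n-2) > (2n-3)(2n-5)(2n-7)$, i.e.\ $36n^2 - 126n + 105 > 0$, both of which hold for all $n\ge 6$ (the quadratic has both roots below $2.2$).

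The monotonicity is where the algebra must be done with care, but it telescopes pleasantly. For each sequence I would form the first difference $d(n) - d(n+1)$; after combining fractions the bracketed term collapses to a constant in every case, leaving
\begin{align*}
&\text{(YHK cherry)}\ \frac{8}{n(n-1)(n-2)}, \qquad \text{(YHK pitchfork)}\ \frac{4(4n-3)}{n(n-1)(n-2)(n-3)}, \\
&\text{(PDA cherry)}\ \frac{6n}{(2n-1)(2n-3)(2n-5)}, \qquad \text{(PDA pitchfork)}\ \frac{18n(n-1)}{(2n-1)(2n-3)(2n-5)(2n-7)},
\end{align*}
each of which is manifestly positive for $n\ge 6$, so all four sequences are strictly decreasing. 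Strict monotonicity then forces the supremum of each sequence on $\{n\ge 6\}$ to be attained at $n=6$; evaluating there yields $\tfrac15,\ \tfrac35,\ \tfrac{10}{21},\ \tfrac{16}{21}$ in the four cases, which delivers the upper bounds $\le\tfrac35$ (YHK) and $\le\tfrac{16}{21}$ (PDA), the two pitchfork values showing that both bounds are sharp. I expect the only genuinely delicate step to be this first-difference computation: a priori one might fear it requires establishing positivity of a degree-four numerator polynomial, and an arithmetic slip in combining the cubic or quartic denominators would obscure the cancellation. Once it is seen that each bracket collapses to a constant, positivity is immediate, and with it the monotonicity, the limits, and the upper bounds all follow with no further analysis.
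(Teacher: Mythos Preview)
Your proposal is correct and follows essentially the same route as the paper: compute the four differences $d(n)$ from the table entries, verify they are strictly decreasing with the stated limits, and read off the bounds from the value at $n=6$. The only cosmetic difference is that the paper checks monotonicity via the ratio $d(n+1)/d(n)<1$ and obtains the strict PDA lower bound $>\tfrac14$ as an immediate consequence of ``strictly decreasing to $\tfrac14$'', whereas you compute $d(n)-d(n+1)$ explicitly and verify the PDA lower bound by a separate denominator-clearing step; your explicit first differences are all correct.
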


\begin{proof}
Define  $a_n=\ee_\YHK(A_n)-\ee_\YHK(A_n^*), b_n=\ee_\YHK(B_n)-\ee_\YHK(B_n^*), \tilde{a}_n=\ee_\PDA(A_n)-\ee_\PDA(A_n^*)$ and $ \tilde{b}_n=\ee_\PDA(B_n)-\ee_\PDA(B_n^*) $. Then we have
$$
a_n =\frac{4(2n-3)}{(n-1)(n-2)(n-3)}
~\quad~
\mbox{and}
~\quad~
b_n =\frac{4}{(n-1)(n-2)}.
$$
Thus we have $a_n, b_n >0$ and $\lim_{n\to \infty} a_n =\lim_{n\to \infty} b_n=0$. Moreover, it is straightforward to check that$a_{n+1}/a_n<1$ and 
$b_{n+1}/b_n<1$ hold for $n\ge 6$, from which we know that $\{a_n\}_{n\ge 6}$ and $\{b_n\}_{n\ge 6}$ are  strictly decreasing sequences whose limits are $0$. This completes the proof of the proposition for the YHK model in view of $a_6=3/5$ and $b_6=1/5$.

Similarly, 
$$
\tilde{a}_n =\frac{2n(n-1)(n-2)}{(2n-3)(2n-5)(2n-7)}
~\quad~
\mbox{and}
~\quad~
\tilde{b}_n =\frac{n(n-1)}{(2n-3)(2n-5)}.
$$
Thus we have  $\tilde{a}_n, \tilde{b}_n >0$ and $\lim_{n\to \infty} {\tilde{a}_n = \lim_{n\to \infty} \tilde{b}_n} =1/4$. Since $\tilde{a}_{n+1}/\tilde{a}_n <1$ and $\tilde{b}_{n+1}/\tilde{b}_n <1$ hold for $n\ge 6$, it follows that 
$\{\tilde{a}_n\}_{n\ge 6}$ and $\{\tilde{b}_n\}_{n\ge 6}$ are  strictly decreasing sequences whose limits are $1/4$. This completes the proof of the proposition for the PDA model in view of $\tilde{a}_6=16/21$ and $\tilde{b}_6=10/21$.
\epf
\end{proof}


In the following proposition, we show that the variance of the number of cherries is greater in the rooted tree case than \twu{it is} in the unrooted tree case regardless of whether \twu{the  YHK model or the PDA model is used}. The same conclusion applies to the variance of the number of pitchforks, \rev{as well as the covariance.}

\begin{proposition}
 \label{prop:var:comp}
For $n\ge 7$ and $Y_n\in \{\pf_n,\ch_n\}$, we have
\begin{equation}
\label{eq:var:comparison}
\var_\YHK(Y_n^*) > \var_\YHK(Y_n)
~\quad~\mbox{and}~\quad~
\var_\PDA(Y_n^*)> \var_\PDA(Y_n).
\end{equation}
Moreover, 
when $n\ge 6$ we have
\begin{equation}
\label{eq:covar:comparison}
\cov_\YHK(\pf^*_n,\ch_n^*)> \cov_\YHK(\pf_n,\ch_n)
~\quad~\mbox{and}~\quad~
\cov_\PDA(\pf^*_n,\ch_n^*)> \cov_\PDA(\pf_n,\ch_n).
\end{equation}
 \end{proposition}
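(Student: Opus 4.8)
The plan is to work entirely from the explicit closed-form expressions collected in Table~\ref{tb:mv}, reducing each of the four claimed inequalities to the positivity of an explicit rational function of $n$. In every case I would subtract the unrooted quantity from the rooted one, clear denominators over a manifestly positive common denominator, and then verify that the resulting polynomial numerator is positive on the stated range of $n$.

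For the YHK model the argument is essentially immediate, since each unrooted entry of Table~\ref{tb:mv} is the corresponding rooted entry minus a single correction term. Concretely,
$$
\var_\YHK(\pf^*_n)-\var_\YHK(\pf_n)=\frac{16(2n-3)^2}{(n-1)^2(n-2)^2(n-3)^2},\qquad
\var_\YHK(\ch^*_n)-\var_\YHK(\ch_n)=\frac{4(n^2-3n+14)}{3(n-1)^2(n-2)^2},
$$
and
$$
\cov_\YHK(\pf^*_n,\ch^*_n)-\cov_\YHK(\pf_n,\ch_n)=\frac{4(n^3-6n^2+35n-42)}{3(n-1)^2(n-2)^2(n-3)}.
$$
The first is positive for all $n\ge 4$; the second is positive because $n^2-3n+14$ has negative discriminant; and the third is positive for $n\ge 6$ because the cubic $n^3-6n^2+35n-42$ is increasing and already positive at $n=6$. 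This settles the three YHK inequalities.

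For the PDA model the cherry-variance and covariance comparisons reduce, after factoring out the common positive prefactors, to low-degree polynomial inequalities. Pulling $n(n-1)/(2(2n-5))$ out of $\var_\PDA(\ch^*_n)-\var_\PDA(\ch_n)$ and clearing the remaining denominator $(2n-3)^2(2n-5)(2n-7)$ leaves the numerator $4n^3-18n^2+2n+30$, which is positive for $n\ge 7$. Likewise, factoring $n(n-1)(n-2)/(2(2n-5)(2n-7))$ out of the covariance difference reduces matters to $2n(4n^2-28n+39)$, whose quadratic factor has largest root below $6$ and is therefore positive for $n\ge 6$. Each reduction needs only the expansion of a pair of quartics (resp. cubics) and a collection of terms.

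The main obstacle is the PDA pitchfork-variance inequality, where the two formulas carry the most complicated numerators and distinct denominators. After factoring the common prefactor $3n(n-1)(n-2)/(4(2n-5)^2(2n-7)(2n-9))$ and placing the remaining bracket over the common denominator $(2n-3)^2(2n-7)(2n-11)$, the sign of the difference is governed by
$$
(n-3)(4n^3-40n^2+123n-110)(2n-7)(2n-11)-(4n^4-76n^3+527n^2-1555n+1610)(2n-3)^2.
$$
Inspecting leading terms shows that the degree-$6$ contributions ($16n^6$ from each product) cancel, so this numerator has degree at most $5$, and the task becomes showing it is positive for $n\ge 7$. I would handle this exactly as in the correlation-monotonicity arguments of Propositions~\ref{prop:pda:pf} and~\ref{prop:yhk:pf}: substitute $n=m+7$ with $m\ge 0$, expand, and check that every coefficient of the resulting polynomial in $m$ is nonnegative (with at least one strictly positive), which forces positivity on the whole range. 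The expansion is routine but lengthy, and carrying the bookkeeping correctly through a degree-$5$ numerator is where the real care lies; a symbolic computation would be used to confirm the coefficients.
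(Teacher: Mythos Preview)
Your approach is correct and essentially identical to the paper's: both proofs subtract the unrooted entry from the rooted one in Table~\ref{tb:mv} and check that the resulting rational function is positive. The only refinement worth noting is that in the PDA pitchfork-variance case your degree-$5$ numerator actually drops to degree $4$ and factors as $8(2n-5)(6n^3-75n^2+277n-273)$, which the paper rewrites as $8(2n-5)\big(n(2n-15)(3n-15)+52n-273\big)$ to read off positivity for $n\ge 7$ directly, avoiding the $n\mapsto m+7$ expansion you propose.
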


 \begin{proof}
Let $a_n=\var_\YHK(A^*_n)-\var_\YHK(A_n), b_n=\var_\YHK(B^*_n)-\var_\YHK(B_n), \tilde{a}_n=\var_\PDA(A^*_n)-\var_\PDA(A_n)$ and $\tilde{b}_n= \var_\PDA(B^*_n)-\var_\PDA(B_n)$. Then for $n\ge 7$ we have $a_n, b_n >0$ because using Table~\ref{tb:mv} we have
$$
a_n =\frac{16(4n^2-12n+9)}{(n-1)^2(n-2)^2(n-3)^2}>0
~\quad~
\mbox{and}
~\quad~
b_n =\frac{4(n^2-3n+14)}{3(n-1)^2(n-2)^2}>0.
$$

Similarly,  by  Table~\ref{tb:mv}  we have
$$
\tilde{b}_n =\frac{n(n-1)[(2n-9)n^2+n+15]}{(2n-3)^2(2n-5)^2(2n-7)} > 0.
$$
Moreover, we have
$$
\tilde{a}_n = \frac{6n(n-1)(n-2)\kappa(n)}{(2n-3)^2(2n-5)(2n-7)^2(2n-9)(2n-11)},
$$
where
$$
\kappa(n)=6n^3-75n^2+277n-273=n(2n-15)(3n-15)+52n-273.
$$
Then $\tilde{a}_n >0$ holds for $n\ge 7$ because $\kappa(n)>0$ clearly holds for $n\ge 8$ and $\kappa(7)=49>0$.

Finally, when $n\ge 6$, using Table~\ref{tb:mv} we have
\begin{align*}
\cov_\YHK(\pf^*_n,\ch^*_n)-\cov_\YHK(\pf_n,\ch_n)&=\frac{4[n(n-2)(n-4)+27n-42]}{3(n-1)^2(n-2)^2(n-3)}>0\,~\mbox{and} \\
\cov_\PDA(\pf^*_n,\ch_n^*)- \cov_\PDA(\pf_n,\ch_n)
&=\frac{n^2(n-1)(n-2)\,[4(n-7)+39)]}{(2n-3)^2(2n-5)^2(2n-7)(2n-9)}>0.
\end{align*}
 \epf
 \end{proof}

 \rev{Note that for the pitchfork distributions, the condition $n\ge 7$ is optimal for~\eqref{eq:var:comparison} in view of $\var_\YHK(\pf_6^*)=\frac{2}{5} < \frac{16}{25}=\var_\YHK(\pf_6)$ and  $\var_\PDA(\pf_6^*) =\frac{104}{441} < \frac{24}{49}= \var_\PDA(\pf_6)$. However, for the cherry distributions, the proof of Proposition~\ref{prop:var:comp} can be extended to show that both $\var_\YHK(\ch_n^*) > \var_\YHK(\pf_n)$ and  $\var_\PDA(\ch_n^*) > \var_\PDA(\ch_n)$ hold for $n\ge 4$. }

\subsection{Log-Concavity}

In addition to mean and variance, in this subsection we shall use results in Sections~\ref{sec:results_pda} and~\ref{sec:results_yhk}  to gain more insights into the properties of the subtree distributions, particularly the cherry distributions. To this end, denote the probability mass functions (PMFs) of $\ch^*_n$ and $\ch_n$ under the PDA model by
$\pmf^*_n$ and $\pmf_n$, respectively. Similarly, denote the probability mass functions 
 of $\ch^*_n$ and $\ch_n$ under the YHK model by $\ypmf^*_n$ and $\ypmf_n$, respectively.

First, \rev{by~(\ref{eq:pda_cherry_pd}) in Theorem~\ref{thm:pda_cherry_probability_distribution} it follows} that  when $n\ge 4$ we have
\begin{equation}
\label{eq:pmf}
\pmf_n(k)=\pp_\PDA(\ch_n=k)=\frac{n!(n-2)!(n-4)!2^{n-2k}}{(n-2k)!(2n-4)!k!(k-2)!}
~~\mbox{for $2\le k \le n/2$,}
\end{equation}
 and $\pmf_n(k)=0$ otherwise. Moreover, \citet[Theorem 6]{WuChoi16} implies that when  $n\ge 4$ we have
\begin{equation}
\label{eq:rpmf}
\pmf^*_n(k)=\pp_\PDA(\ch^*_n=k)=\frac{n!(n-1)!(n-2)!2^{n-2k}}{(n-2k)!(2n-2)!k!(k-1)!}
~~\mbox{for $1\le k \le n/2$,}
\end{equation}
and $\pmf^*_n(k)=0$ otherwise.

Next, we show that  both $\ypmf_n$ and $\pmf_n$ (i.e. the probability mass functions  
of $B_n$ under the YHK  model and the PDA model, respectively) are log-concave, \rev{and hence also unimodal}. 
To this end,  recall that $\nabla(n)=\lceil n/4 \rceil$. 
\twu{Furthermore, by~\citet[Lemma 3]{WuChoi16} it follows that for four non-negative numbers $z_1,z_2,z_3,z_4$ with $z_2^2\ge z_1z_3$ and $z_3^2\ge z_2z_4$, we have
	\begin{equation} \label{eq:concave:four-number}
	z_2z_3\ge z_1z_4~~\mbox{and}~~z_1z_3+z_2z_4 \ge 2z_1z_4.
	\end{equation}
}

\begin{theorem}
\label{them:log-concav}
For $n \geq 4$ and $2 \le k \le n/2$ we have
\begin{equation} \label{eq:cherry_log_concavity}
\ypmf^2_n(k)> \ypmf_n(k-1)\ypmf_n(k+1)~	
	~\quad~
	\mbox{and}
	~\quad~
	\pmf^2_n(k)> \pmf_n(k-1)\pmf_n(k+1).
\end{equation}
Moreover, when $n> 8$, \rev{then}  $\pmf_n(k-1) < \pmf_n(k)$ for $2 \le k < \nabla(n)$, and $\pmf_n(k) > \pmf_n(k+1)$  for $\nabla(n)\le k \le n/2$.
\end{theorem}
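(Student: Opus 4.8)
The plan is to handle the two models separately, relying on the explicit formula \eqref{eq:pmf} for the PDA model and on the recursion \eqref{eq:yhk_cherry_probability} together with the four-number inequality \eqref{eq:concave:four-number} for the YHK model, for which no closed form is available.

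For the PDA distribution $\pmf_n$ I would work entirely from \eqref{eq:pmf}. Cancelling factorials and powers of two in the consecutive ratio gives $\pmf_n(k)/\pmf_n(k+1)=4(k-1)(k+1)/[(n-2k)(n-2k-1)]$, valid on the range $2\le k\le n/2-1$ where both values are positive. The numerator strictly increases and the (positive) denominator strictly decreases in $k$, so this ratio strictly increases; by the monotone-ratio characterisation of log-concavity recalled earlier this is precisely $\pmf_n^2(k)>\pmf_n(k-1)\pmf_n(k+1)$, the boundary values $k=2$ and $k=\lfloor n/2\rfloor$ (where one factor vanishes) being trivial. For the location of the mode I would examine $\pmf_n(k)/\pmf_n(k-1)$ instead: it exceeds $1$ exactly when $(n-2k+2)(n-2k+1)>4k(k-2)$, which simplifies to the linear condition $k<(n+1)(n+2)/(2(2n-1))$. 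Since $\lfloor(n+1)(n+2)/(2(2n-1))\rfloor=\nabla(n)$ for $n>8$ by \eqref{eq:nabla}, the sequence is strictly increasing while $k<\nabla(n)$ and strictly decreasing once $k\ge\nabla(n)$; a short floor argument shows the switch is strict.

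For the YHK distribution $\ypmf_n$ I would argue by induction on $n$, with base cases $n=4,5,6$ furnished by \eqref{eq:ch:four:five} and \eqref{eq:six:distribution}, and with \eqref{eq:yhk_cherry_probability}, namely $\ypmf_{n+1}(k)=\frac{2k}{n}\ypmf_n(k)+\frac{n-2k+2}{n}\ypmf_n(k-1)$, driving the step. Writing $p_j=\ypmf_n(j)$, $a=2k$, $b=n-2k+2$ and $D=n^4\big(\ypmf_{n+1}^2(k)-\ypmf_{n+1}(k-1)\ypmf_{n+1}(k+1)\big)$, expanding $D$ yields a combination of the products $p_k^2,p_{k-1}^2,p_{k-1}p_k,p_{k-1}p_{k+1},p_{k-2}p_k$ and $p_{k-2}p_{k+1}$. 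The decisive move is to invoke the first conclusion of \eqref{eq:concave:four-number} with $(z_1,z_2,z_3,z_4)=(p_{k-2},p_{k-1},p_k,p_{k+1})$, whose hypotheses are exactly the inductive log-concavity relations $p_{k-1}^2\ge p_{k-2}p_k$ and $p_k^2\ge p_{k-1}p_{k+1}$, to replace $p_{k-2}p_{k+1}$ by the larger quantity $p_{k-1}p_k$. The cross-term coefficients then collapse to $-8p_{k-1}p_k$, and substituting the two log-concavity bounds $p_{k-1}p_{k+1}\le p_k^2$ and $p_{k-2}p_k\le p_{k-1}^2$ makes the $a^2$- and $b^2$-weighted terms cancel, leaving $D\ge 4(p_k-p_{k-1})^2\ge 0$. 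To reach the strict inequality I would keep $p_{k-1}p_{k+1}<p_k^2$ strict; this is legitimate because its coefficient $a^2-4>0$ for $k\ge 2$, and in the interior the inductive hypothesis is strict while at the right edge of the support $p_{k+1}=0<p_k^2$. Hence $D>0$.

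I expect the only real obstacle to be this YHK step. The naive regrouping of $D$ leaves an indefinite term $4(p_{k+1}-p_k)(p_{k-1}-p_{k-2})$ whose sign flips near the mode, so the key realisation is that one must first bound $p_{k-2}p_{k+1}$ by $p_{k-1}p_k$ via the four-number inequality, after which the $a^2,b^2$ terms cancel exactly. The remaining care is bookkeeping: tracking the support boundaries of $\ypmf_n$ against those of $\ypmf_{n+1}$ so that at least one inequality remains strict, and confirming the base cases and the degenerate range $2\le k\le n/2$ so that every application of \eqref{eq:concave:four-number} is valid.
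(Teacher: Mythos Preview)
Your proposal is correct and follows essentially the same strategy as the paper: the monotone consecutive ratio derived from \eqref{eq:pmf} for the PDA part, and induction on $n$ via the recursion \eqref{eq:yhk_cherry_probability} together with \eqref{eq:concave:four-number} for the YHK part. The only differences are cosmetic (your clearing factor should be $n^{2}$, not $n^{4}$) and in the final regrouping of the YHK step: the paper splits $L(k)-R(k)$ into four non-negative summands using both halves of \eqref{eq:concave:four-number}, whereas you first bound $p_{k-2}p_{k+1}\le p_{k-1}p_k$ and then collapse what remains to the perfect square $4(p_k-p_{k-1})^2$---an equally valid and arguably cleaner rearrangement.
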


\begin{proof}
 For $n\ge 4$ and $2\leq t \leq n/2$, let
\begin{align}
g_n(t):=\frac{\pmf_n(t-1)}{\pmf_n(t)}
=\frac{4t(t - 2)}{(n - 2t + 1)(n - 2t + 2)}, 
\end{align}
where the equality follows from~\eqref{eq:pmf}. 
Note that $g_n$ is a strictly increasing function, that is, $g_n(t-1)<g_n(t)$ holds for $2< t \le n/2$. Since we have $\pmf_n(k)>0$ if $2\le k \leq n/2$, and $\pmf_n(k)=0$ otherwise, it follows that $\pmf^2_n(k)> \pmf_n(k-1)\pmf_n(k+1)$ holds for $2\le k\le n/2$. This completes the proof of~(\ref{eq:cherry_log_concavity}) for $\pmf_n$.  

Next, assume $n>8$. Since $g_n$ is strictly increasing and $g_n(t)=1$ if and only if $t=(n+1)(n+2)/(2(2n-1))$, by~\eqref{eq:nabla} we have  $g_n(t)<1$ for $2\le t<\nabla(n)$. Together with the fact that $\pmf_n(k)>0$ holds if and only if $2\le k \leq n/2$, this shows that $\pmf_n(k-1) < \pmf_n(k)$ holds for $2\le k< \nabla(n)$.  Similarly, noting that $g_n(t)>1$ holds for  $\nabla(n) < t \le n/2 $,  we have  $\pmf_n(k) > \pmf_n(k+1)$  for $\nabla(n) \le k < n/2 $, from which the last statement of the theorem follows.  

It remains to prove~(\ref{eq:cherry_log_concavity}) for $\ypmf_n$, 
which will be established by induction on $n$. The base case $n=4$ clearly holds. Now assume $n\ge 4$ and let $\beta_t=\ypmf_n(t)$ for $1\le t \le n$. 
Then~(\ref{eq:yhk_cherry_probability}) implies
 \begin{equation}
 \label{eq:concave:proof:ypmf}
 n\ypmf_{n+1}(t)=2t\beta_t+(n-2t+2)\beta_{t-1}~\mbox{for $1< t < n$}. 
 \end{equation}
Furthermore, we have the induction assumption:
 \begin{equation}
\label{eq:concave:proof:ia}
\beta^2_t=\ypmf^2_n(t)> \ypmf_n(t-1)\ypmf_n(t+1)=\beta_{t-1}\beta_{t+1}~\mbox{for $2\le t \le n/2$.}
 \end{equation}
Since 
$\ypmf_{n+1}(t)=0$ holds for $t>(n+1)/2$, we have $\ypmf_{n+1}(k+1)=0$ for $k\ge n/2$. Together with $\ypmf_{n+1}(1)=0$, it follows that for the induction step it suffices to show
\begin{equation}
\label{eq:pdf:lc:induction}
n^2\ypmf^2_{n+1}(k)\,>\,n\ypmf_{n+1}(k-1)n\ypmf_{n+1}(k+1)~\mbox{ for  $2<k<n/2$.}
\end{equation}
To this end, denote the left-hand and right-hand sides of~\eqref{eq:pdf:lc:induction} by $L(k)$ and $R(k)$, respectively, that is, we have $L(k)=n^2\ypmf^2_{n+1}(k)$ and $R(k)=n\ypmf_{n+1}(k-1)n\ypmf_{n+1}(k+1)$.


Now fix a number $k$ with $2<k<n/2$.  Then both
$a=(n-2k+2)^2$ and $b=2k(n-2k+2)+2n$ are greater than $0$.   Furthermore, using~\eqref{eq:concave:proof:ypmf} three times with $t=k$, $t=k-1$, and $t=k+1$, we have
\begin{eqnarray}
	L(k)&=&4k^2\beta_k^2+a\beta^2_{k-1}+4k(n-2k+2)\beta_{k-1}\beta_{k}~~\mbox{and} \label{eq:concave:left}\\
	R(k)&=&(4k^2-4)\beta_{k-1}\beta_{k+1}+(a-4)\beta_{k-2}\beta_{k}+2(n-2k)(k-1)\beta_{k-1}\beta_{k} +(b+8)\beta_{k-2}\beta_{k+1},	\label{eq:concaveright}
\end{eqnarray}
where the equalities $(n-2k)(n-2k+4)=a-4$ and  
$
2(k+1)(n-2k+4)=b+8
$ 
are used to obtain~\eqref{eq:concaveright}.

Since $b=4k(n-2k+2)-2(n-2k)(k-1)$, by~\eqref{eq:concave:left} and~\eqref{eq:concaveright}  we have 
\begin{align}
L(k)-R(k) & =	4k^2 (\beta^2_k-\beta_{k-1}\beta_{k+1})
+a(\beta^2_{k-1}-\beta_{k-2}\beta_{k}) 
+b(\beta_{k-1}\beta_{k}-\beta_{k-2}\beta_{k+1}) \notag\\
& \quad \quad +4(\beta_{k-2}\beta_{k}+\beta_{k-1}\beta_{k+1}-2\beta_{k-2}\beta_{k+1}) \notag\\
& > 0. \label{eq:concave:final}
\end{align}
To prove~\eqref{eq:concave:final}, by using~\eqref{eq:concave:proof:ia} twice with $t=k-1$ and $t=k$ we have
\begin{equation}
\label{eq:concave:final:fst}
\beta^2_{k-1}> \beta_{k-2}\beta_{k}~~~\mbox{and}~~
\beta^2_k> \beta_{k-1}\beta_{k+1}. 
\end{equation}
Next, combining~\eqref{eq:concave:four-number} and~\eqref{eq:concave:final:fst} shows\begin{equation}
\label{eq:concave:final:snd}
\beta_{k-1}\beta_{k}\ge \beta_{k-2}\beta_{k+1}~~\mbox{and}~~ 
\beta_{k-2}\beta_{k}+\beta_{k-1}\beta_{k+1}\ge 2\beta_{k-2}\beta_{k+1}. 
\end{equation}
Then~\eqref{eq:concave:final} follows from~\eqref{eq:concave:final:fst}-\eqref{eq:concave:final:snd}  and the fact that $a>0$ and $b>0$. 
This completes the proof of~\eqref{eq:pdf:lc:induction}, and hence also the theorem.
\epf
\end{proof}


Note that the last statement of Theorem~\ref{them:log-concav} does not hold for $n=8$ because in this case we have $\nabla(n)=2$ and $\pmf_n(2)=\pmf_n(3)=16/33$.

\twu{
Finally, we show that there exists a unique change point between $\ypmf_n$ and $\pmf_n$. Note that a similar result for $\ypmf^*_n(k)$ and $\pmf^*_n(k)$ is established in~\citet[Theorem 8]{WuChoi16}.  
}
\begin{theorem}
\label{thm:change:point}
Let $n\ge 6$.  Then the ratio $\ypmf_n(k)/\pmf_n(k)$ is strictly increasing for $2 \leq k \leq n/2$.  In particular, there exists a real number $\kappa_n$ with \twu{$2  < \kappa_n < n/2$} such that
$$
\ypmf_n(k)<\pmf_n(k)~\mbox{for $2\le k < \kappa_n$} \quad \mbox{and} \quad
\ypmf_n(k)>\pmf_n(k)~\mbox{for $\kappa_n < k \le n/2$}.
$$
\end{theorem}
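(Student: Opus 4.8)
\textbf{The plan} is to prove the monotonicity of the ratio by induction on $n$, and then to deduce the existence and uniqueness of the change point $\kappa_n$ from the fact that $\ypmf_n$ and $\pmf_n$ are both probability mass functions supported on the same set $\{k:2\le k\le n/2\}$. Writing $y_j=\ypmf_n(j)$ and $p_j=\pmf_n(j)$, the assertion that $\ypmf_n(k)/\pmf_n(k)$ is strictly increasing is equivalent to the positivity of the cross-differences $D_n(k):=y_{k+1}p_k-y_kp_{k+1}$ throughout the relevant range. The base case $n=6$ follows directly from the distributions in~\eqref{eq:six:distribution}, which give $\ypmf_6(2)/\pmf_6(2)=14/15<21/15=\ypmf_6(3)/\pmf_6(3)$.

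For the induction step I would assume that $D_n(j)>0$ for all $j$ (equivalently, that $r_n(j):=y_j/p_j$ is strictly increasing) and prove $D_{n+1}(k)>0$. The engine is the pair of one-step recursions~\eqref{eq:yhk_cherry_probability} and~\eqref{eq:pda_cherry_probability}: substituting $n\ypmf_{n+1}(k)=2ky_k+(n-2k+2)y_{k-1}$ and $(2n-3)\pmf_{n+1}(k)=(n+2k-3)p_k+(n-2k+2)p_{k-1}$ (together with the analogous formulas at $k+1$) into $n(2n-3)D_{n+1}(k)$ and expanding yields a linear combination of the seven products $y_ip_j$ with $i,j\in\{k-1,k,k+1\}$. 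The key algebraic step is to reorganise this combination into the identity
\begin{align*}
n(2n-3)D_{n+1}(k)&=2k(n+2k-1)\,D_n(k)+(n-2k)(n-2k+2)\,D_n(k-1)\\
&\quad+2(k+1)(n-2k+2)\,E_n(k)+(n-3)\,B_n(k),
\end{align*}
where $E_n(k):=y_{k+1}p_{k-1}-y_{k-1}p_{k+1}$ is the two-step cross-difference and $B_n(k):=2y_{k+1}p_k+(n-2k)y_kp_k-(n-2k+2)y_{k-1}p_{k+1}$ is a leftover term; checking this identity is a routine but careful comparison of the coefficients of each $y_ip_j$. Under the induction hypothesis the quantities $D_n(k)$, $D_n(k-1)$ and $E_n(k)$ are all positive (the last because a strictly increasing ratio is in particular monotone across two steps, so $y_{k+1}p_{k-1}>y_{k-1}p_{k+1}$), and their coefficients are positive for $2<k<n/2$ and $n\ge 6$.

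\textbf{The main obstacle} is the leftover term $B_n(k)$, whose final summand carries a minus sign and so must be shown positive rather than being manifestly so. Here I would combine the induction hypothesis with the explicit PDA ratio $p_{k-1}/p_k=g_n(k)=\frac{4k(k-2)}{(n-2k+1)(n-2k+2)}$ coming from~\eqref{eq:pmf}. The hypothesis $r_n(k-1)<r_n(k)$ gives $y_{k-1}<y_k\,g_n(k)$, and inserting the PDA ratio $p_{k+1}/p_k=1/g_n(k+1)$ lets one bound the negative summand by a multiple of $y_kp_k$; after clearing denominators the resulting coefficient of $y_kp_k$ has the sign of
\begin{equation*}
(n-2k)(2k-1)+2k^2-2k-1,
\end{equation*}
which is manifestly positive for $k\ge 2$ and $k<n/2$. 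Hence $B_n(k)>0$, which closes the induction. Mild care is needed at the top of the range, where one of $y_{k+1},p_{k+1}$ may vanish; there the omitted terms are nonnegative and only strengthen the inequality, so these boundary cases are dispatched separately but routinely.

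Finally, once the ratio is known to be strictly increasing on $\{2,\dots,\lfloor n/2\rfloor\}$, the change point follows at once: since $\sum_k\ypmf_n(k)=\sum_k\pmf_n(k)=1$ over the common support, the ratio can be neither $\ge 1$ throughout nor $\le 1$ throughout, so strict monotonicity forces a unique real threshold $\kappa_n\in(2,n/2)$ with $\ypmf_n(k)<\pmf_n(k)$ for $k<\kappa_n$ and $\ypmf_n(k)>\pmf_n(k)$ for $k>\kappa_n$.
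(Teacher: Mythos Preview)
Your proof is correct; I have verified the key identity (the coefficients of all seven products $y_ip_j$ match on both sides) and the sign computation for $B_n(k)$, and the boundary cases at $k=2$ and at the top of the range indeed fall out as you say. The approach, however, is organized differently from the paper's. The paper does not work with the bilinear cross-differences $D_n(k)$; instead it fixes the quantity $f_n(k):=\ypmf_n(k-1)/\ypmf_n(k)$ and proves by induction on $n$ that $f_n(k)<g_n(k)$, using only the YHK recursion~\eqref{eq:yhk_cherry_probability} together with the closed form for $g_n(k)$. Its induction step expresses $f_{n+1}(k)$ as a ratio of two linear combinations of $\ypmf_n$-values, bounds the numerator and denominator separately via the hypothesis, and then reduces the remaining inequality to an explicit polynomial identity in $g_n$; this requires a three-way case split ($k=3$, the top endpoint, and the generic middle). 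Your decomposition is more uniform across $k$ and uses both recursions~\eqref{eq:yhk_cherry_probability} and~\eqref{eq:pda_cherry_probability} symmetrically, at the cost of introducing the auxiliary term $B_n(k)$ whose positivity still needs the explicit PDA ratio. Both routes are genuine inductions on $n$ that ultimately lean on the closed form~\eqref{eq:pmf}, so neither is materially shorter, but yours avoids the case analysis while the paper's avoids carrying the PDA recursion.
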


\begin{proof}
First, by~\eqref{eq:yhk_cherry_probability} in Theorem~\ref{thm:yule_cherry_probability_distribution} we have 
\begin{equation}
\label{eq:chpoint:yule}
\ypmf_{n+1}(t)=\frac{2t}{n}\ypmf_n(t)+\frac{n-2t+2}{n}\ypmf_n(t-1)~~
\mbox{ for $n\ge 5$ and $2\le t < n$.} 
\end{equation}

Next,  we have $\ypmf_n(2)>\pmf_n(2)$ for $n\ge 6$ because $\ypmf_5(2)=1=\pmf_5(2)$ and
$$
\frac{\ypmf_{n}(2)}{\ypmf_{n-1}{(2)}}=\frac{4}{n-1}<\frac{n}{2n-5}=\frac{\pmf_{n}(2)}{\pmf_{n-1}(2)}
~\mbox{for $n\ge 6$,}
$$
 where the first equality derives from~\eqref{eq:chpoint:yule} and $\ypmf_n(1)=0$, and the second equality follows from~\eqref{eq:pmf}.


Finally, when $n\ge 6$, since the sum of $\ypmf_n(k)$ (resp. $\pmf_n(k)$) over $k$ between 2 and $n/2$  is 1 and we have  $\ypmf_n(2)>\pmf_n(2)$, 
 it remains to establish the inequality in 
\begin{equation}
    f_n(k):=\frac{\ypmf_n(k-1)}{\ypmf_n(k)} <
    \frac{\pmf_n(k-1)}{\pmf_n(k)}=\frac{4k (k - 2)}{(n - 2k + 2) (n - 2k + 1)}:=g_n(k)    \notag
\end{equation}
for $3\le k\le n/2$. 
To this end, we shall proceed by induction on $n$. 
The base case $n=6$ follows from  $f_6(3)=4<6=g_6(3)$ in view of~\eqref{eq:six:distribution}. 
Let $n\ge 6$ and assume that $f_n(k)<g_n(k)$ holds for $3\le k\le n/2$. For  the induction step it suffices to show
\begin{equation}
\label{eq:change:point:induction}
f_{n+1}(k)<g_{n+1}(k)
~~\mbox{for $3\le k\le (n+1)/2$.}
\end{equation}
 
In the remainder of the proof, we shall establish~\eqref{eq:change:point:induction} by considering the following three cases. The first one is $k=3$. Since $f_{n+1}(3)>0$ and $g_{n+1}(3)>0$, the inequality in~\eqref{eq:change:point:induction} follows from
 \begin{eqnarray*}
 	\frac{1}{f_{n+1}(3)}=\frac{\ypmf_{n+1}(3)}{\ypmf_{n+1}(2)}=\frac{3\ypmf_n(3)}{2\ypmf_n(2)}+\frac{n-4}{4}
 	> 	\frac{3(n-4)(n-5)}{24}+\frac{n-4}{4} 
 	>\frac{(n-3)(n-4)}{12}=\frac{1}{g_{n+1}(3)},
 \end{eqnarray*}	
\noindent
where the second equality derives from using~\eqref{eq:chpoint:yule} twice with $t=3$, and with $t=2$ and $\ypmf_n(1)=0$;  the first inequality follows from $0<f_n(3)<g_n(3)=12/(n-4)(n-5)$, as implied by the induction assumption. 

The second case\footnote{Note that in this case the induction assumption is not applicable to  $f_n(k)$ and $g_n(k)$ as $k>n/2$ implies $f_n(k)=g_n(k)=0$.} occurs when $n+1$ is an even number and $k=(n+1)/2$. Using $n-2k+1=0$, we have $g_{n+1}(k)=2k(k-2)$ and $3g_n(k-1)=2(k-1)(k-3)$. Therefore~\eqref{eq:change:point:induction} follows from
$$
	f_{n+1}(k)=2(k-1)+3f_{n}(k-1)<2(k-1)+2(k-1)(k-3)=2(k-1)(k-2)<2k(k-2)=g_{n+1}(k), 
$$
where the first equality derives from using~\eqref{eq:chpoint:yule} twice with $t=k-1$, and with $t=k$ and $\ypmf_n(k)=0$; the first inequality follows from $3f_n(k-1)<3g_n(k-1)=2(k-1)(k-3)$, as implied by the induction assumption. 

The final case is  $3<k<(n+1)/2$. Since $3<k\le n/2$, we have
\begin{eqnarray*}
f_{n+1}(k)&=&\frac{n\ypmf_{n+1}(k-1)}{n\ypmf_{n+1}(k)}
	=\frac{2(k-1)\ypmf_n(k-1)+(n-2k+4)\ypmf_n(k-2)}{2k\ypmf_n(k)+(n-2k+2)\ypmf_n(k-1)} \\
	&=&\frac{2(k-1)+(n-2k+4)f_n(k-1)}{(n-2k+2)+\frac{2k}{f_n(k)}} < \frac{2(k-1)+(n-2k+4)g_n(k-1)}{(n-2k+2)+\frac{2k}{g_n(k)}},
\end{eqnarray*}
where the second equality derives from using~\eqref{eq:chpoint:yule} twice with $t=k$ and $t=k-1$; the third equality follows from $\ypmf_n(k-1)>0$. Furthermore, the inequality derives from $f_n(k-1)<g_n(k-1)$ and $0<f_n(k)<g_n(k)$, both following from the induction assumption. Since $g_n(k)>0$, for 
\eqref{eq:change:point:induction} it suffices to show 
\begin{equation}
\label{eq:chpoint:last}
2(k-1)+(n-2k+4)g_n(k-1) < (n-2k+2)g_{n+1}(k)+\frac{2kg_{n+1}(k)}{g_n(k)}.
\end{equation}
To this end, denote the left-hand and right-hand sides of~\eqref{eq:chpoint:last} by $L(k,n)$ and $R(k,n)$, respectively. Since
$$
\frac{g_{n+1}(k)}{g_n(k)}=\frac{n-2k+1}{n-2k+3}=1-\frac{2}{n-2k+3}
$$
and
$$
(n-2k+2)g_{n+1}(k)-(n-2k+4)g_n(k-1)=\frac{4k(k-2)}{n-2k+3}-\frac{4(k-1)(k-3)}{n-2k+3}=\frac{4(2k-3)}{n-2k+3},
$$
we have
$$
R(k,n)-L(k,n)=2k-\frac{4k}{n-2k+3}+\frac{4(2k-3)}{n-2k+3}-2(k-1)=2+\frac{4(k-3)}{n-2k+3}>0. 
$$
This completes the proof of~\eqref{eq:chpoint:last}, hence also the theorem. 
\epf
\end{proof}

\subsection{Total Variation Distance}

In this subsection, we study the differences between rooted and unrooted cherry distributions for the two models. One common approach to quantifying such differences \rev{is} total variation distance, that is, the largest possible difference between the probabilities that the two probability distributions can assign to the same event. More specifically, we are interested in the behaviour of
$$
 d^u_{TV}(\ch^*_n, \ch_n) = \frac{1}{2} \sum_{k = 1}^{\lfloor n/2 \rfloor} |\pmf^*_n (k) - \pmf_n (k)|
 ~\quad~
 \mbox{and}
  ~\quad~
   d^y_{TV}(\ch^*_n, \ch_n) = \frac{1}{2} \sum_{k = 1}^{\lfloor n/2 \rfloor} |\ypmf^*_n (k) - \ypmf_n (k)|.
$$

We begin with the total variation distance $d^u_{TV}(\ch^*_n, \ch_n)$ for the PDA model. To this end,
recall that $\Delta(n)=\lfloor n/4 \rfloor$.

\begin{lemma}
\label{pos}
Let $n \ge 4$. Then we have
\begin{equation}
    \label{eq:pmf:rpmf}
    \pmf^*_n(k) = \frac{(2n-3-2k)}{(2n-3)}\pmf_n(k) + \frac{2(k+1)}{(2n-3)} \pmf_n(k+1), \quad 1 \le k \le n/2.
\end{equation}
Moreover, we have  $\pmf^*_n(k) \ge \pmf_n(k)$ for $ 1 \le k \le \Delta(n)$, and $\pmf^*_n(k) < \pmf_n(k)$ for $ \Delta(n) < k \le n/2.$
\end{lemma}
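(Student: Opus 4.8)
The plan is to establish the identity~\eqref{eq:pmf:rpmf} first and then read off the sign comparison from it. For~\eqref{eq:pmf:rpmf} I would verify it directly from the closed forms~\eqref{eq:pmf} and~\eqref{eq:rpmf}. Dividing the asserted identity by $\pmf_n(k)$, which is positive for $2\le k\le n/2$, reduces everything to the two elementary ratios
\[
\frac{\pmf^*_n(k)}{\pmf_n(k)}=\frac{(n-2)(n-3)}{2(2n-3)(k-1)},
\qquad
\frac{\pmf_n(k+1)}{\pmf_n(k)}=\frac{(n-2k)(n-2k-1)}{4(k+1)(k-1)},
\]
after which~\eqref{eq:pmf:rpmf} collapses to the polynomial identity $2(k-1)(2n-3-2k)+(n-2k)(n-2k-1)=(n-2)(n-3)$, which is routine to expand. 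The degenerate endpoints $k=1$ and $k=n/2$, where one of the $\pmf_n$-terms vanishes, are covered by the same computation. A more conceptual alternative is available: under the PDA model a uniform rooted tree is obtained by taking a uniform unrooted tree $T$ and attaching the root to a uniformly chosen one of its $2n-3$ edges; rooting leaves the cherry count unchanged unless the chosen edge is one of the $2\ch(T)$ pendant edges sitting inside a cherry, in which case the count drops by exactly one. Conditioning on $\ch(T)$ (only the values $k$ and $k+1$ contribute) then reproduces~\eqref{eq:pmf:rpmf}.

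For the inequality part, the key move is to rearrange~\eqref{eq:pmf:rpmf} into
\[
\pmf^*_n(k)-\pmf_n(k)=\frac{2}{2n-3}\bigl[(k+1)\pmf_n(k+1)-k\,\pmf_n(k)\bigr],
\]
so that the sign of $\pmf^*_n(k)-\pmf_n(k)$ is exactly the sign of $(k+1)\pmf_n(k+1)-k\,\pmf_n(k)$. The case $k=1$ is immediate because $\pmf_n(1)=0<\pmf_n(2)$, giving $\pmf^*_n(1)>\pmf_n(1)$. For $2\le k\le n/2$ I would divide by the positive quantity $k\,\pmf_n(k)$ and use the second ratio above to obtain
\[
\frac{(k+1)\pmf_n(k+1)}{k\,\pmf_n(k)}=\frac{(n-2k)(n-2k-1)}{4k(k-1)},
\]
which is at least $1$ precisely when $(n-2k)(n-2k-1)-4k(k-1)=n(n-1)-2k(2n-3)\ge 0$, that is, when $k\le \frac{n(n-1)}{2(2n-3)}$.

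The final step is to convert this real threshold into $\Delta(n)$. Since $k$ is an integer, $k\le \frac{n(n-1)}{2(2n-3)}$ holds iff $k\le \bigl\lfloor \frac{n(n-1)}{2(2n-3)}\bigr\rfloor=\Delta(n)$, the last equality being~\eqref{eq:delta}. This gives $\pmf^*_n(k)\ge \pmf_n(k)$ for $1\le k\le\Delta(n)$ and, because $\Delta(n)+1>\frac{n(n-1)}{2(2n-3)}$ forces the displayed ratio strictly below $1$, the strict inequality $\pmf^*_n(k)<\pmf_n(k)$ for $\Delta(n)<k\le n/2$. I do not anticipate a serious obstacle: the whole argument is a sign analysis of a single ratio. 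The only delicate points are the bookkeeping at the endpoints $k=1$ and $k=n/2$ (handled directly, since a $\pmf_n$-factor vanishes there), and the appeal to~\eqref{eq:delta} that pins the change-point to the integer $\Delta(n)$ rather than merely to the real number $\frac{n(n-1)}{2(2n-3)}$. Should one instead pursue the probabilistic proof of~\eqref{eq:pmf:rpmf}, the one genuinely non-trivial ingredient is the case analysis showing that rooting inside a cherry lowers the cherry count by exactly one.
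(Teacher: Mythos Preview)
Your proposal is correct and follows essentially the same route as the paper: verify~\eqref{eq:pmf:rpmf} directly from the closed forms~\eqref{eq:pmf} and~\eqref{eq:rpmf} by computing the relevant ratios, then rewrite the identity as $\pmf^*_n(k)-\pmf_n(k)=\frac{2}{2n-3}\bigl[(k+1)\pmf_n(k+1)-k\,\pmf_n(k)\bigr]$ and reduce the sign question to whether $n(n-1)-2k(2n-3)\ge 0$, invoking~\eqref{eq:delta} to identify the integer threshold with $\Delta(n)$. The only differences are cosmetic (you divide through by $\pmf_n(k)$ where the paper divides by $\pmf^*_n(k)$), and your probabilistic explanation of~\eqref{eq:pmf:rpmf} via rooting at a uniform edge is a pleasant extra not present in the paper.
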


\begin{proof}
Fix $n\ge 4$ and $1\le k \le n/2$. Noting that $\pmf_n(1)=0$ \twu{and $\pmf_n(k+1)=0$ for $k=\lfloor n/2 \rfloor$}, by~(\ref{eq:pmf})  and~(\ref{eq:rpmf}) we have
$$
\frac{\pmf_n(k)}{\pmf^*_n(k)} =\frac{2(2n-3)(k-1)}{(n-2)(n-3)}
~~\mbox{and}~~~
\frac{\pmf_n(k+1)}{\pmf^*_n(k)} =\frac{(n-2k)(n-2k-1)(2n-3)}{2(k+1)(n-2)(n-3)}.
$$
 This implies
$$
\frac{(2n-3-2k)}{(2n-3)}\frac{\pmf_n(k)}{\pmf^*_n(k)}+\frac{2(k+1)}{(2n-3)}\frac{\pmf_n(k+1)}{\pmf^*_n(k)}=1,
$$
from which~(\ref{eq:pmf:rpmf}) follows.

Putting $c_n={n!(n-2)! (n-4)!}/{(2n-4)!}$, then we have
\begin{equation*}
    \pmf_n(k) = c_n \frac{2^{n-2k}}{(n-2k)! k! (k-2)!}.
\end{equation*}
Using~(\ref{eq:pmf:rpmf}), we have
\begin{eqnarray}
\pmf^*_n (k) - \pmf_n (k) &=&  \frac{2}{(2n - 3)} \Big( (k + 1) \pmf_n (k+1) - k \pmf_n (k) \Big) \label{eq:diff:rt:step}\\
&=& \frac{ c_n 2^{n - 2k - 1} }{(2n - 3) (n - 2k)! k! (k - 1)!} \Big( (n - 2k)(n - 2k - 1) - 4 k(k - 1) \Big) \nonumber \\
&=& \frac{ c_n 2^{n - 2k - 1} }{(2n - 3) (n - 2k)! k! (k - 1)!} \Big( n(n - 1) - 2 (2n - 3) k \Big).
\nonumber
\end{eqnarray}
Hence, $\pmf^*_n (k) - \pmf_n (k) \ge 0$ if and only if $k \le \frac{ n(n - 1) }{ 2(2n - 3) }$ or, equivalently, $1 \le k \le \Delta(n)$ in view of~(\ref{eq:delta}).
\epf
\end{proof}

\begin{theorem} \label{thm:pda_cherry_dtv}
For $n\ge 4$, the total variation distance $d^{u}_{TV}(\ch^*_n, \ch_n)$ between the cherry distributions under the PDA model is
\begin{equation}
\label{eq:pda_cherry_dtv}
  d^{u}_{TV}(\ch^*_n, \ch_n) = \frac{n! (n-2)! (n-4)! 2^{n-2\Delta(n)-1}}{(2n-3)! (n-2\Delta(n)-2)! \Delta(n)! (\Delta(n)-1)!} =\frac{1}{\sqrt{2 \pi n}} \left(1+o(1)\right).   \end{equation}
\end{theorem}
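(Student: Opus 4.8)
The plan is to split the argument into an exact-formula part and an asymptotic part. For the exact formula, I would first exploit that $\pmf^*_n$ and $\pmf_n$ are both probability mass functions, so that $\sum_k (\pmf^*_n(k)-\pmf_n(k))=0$ and hence the total variation distance equals the sum of the positive differences alone, namely $d^u_{TV}(\ch^*_n,\ch_n)=\sum_{k:\,\pmf^*_n(k)\ge \pmf_n(k)}(\pmf^*_n(k)-\pmf_n(k))$. By the sign-change statement in Lemma~\ref{pos}, the positive region is precisely $1\le k\le \Delta(n)$, so the sum reduces to $\sum_{k=1}^{\Delta(n)}(\pmf^*_n(k)-\pmf_n(k))$. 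The key observation is that the identity~\eqref{eq:diff:rt:step} exhibits each summand as a forward difference of the sequence $u_k:=k\,\pmf_n(k)$, since $\pmf^*_n(k)-\pmf_n(k)=\tfrac{2}{2n-3}(u_{k+1}-u_k)$. The sum therefore telescopes to $\tfrac{2}{2n-3}\bigl(u_{\Delta(n)+1}-u_1\bigr)$, and because $\pmf_n(1)=0$ this collapses to $\tfrac{2(\Delta(n)+1)}{2n-3}\,\pmf_n(\Delta(n)+1)$. Substituting the explicit expression~\eqref{eq:pmf} for $\pmf_n(\Delta(n)+1)$ and tidying up --- cancelling the factor $\Delta(n)+1$ against $(\Delta(n)+1)!$, absorbing $2n-3$ into $(2n-4)!$ to form $(2n-3)!$, and merging the two powers of $2$ --- yields the closed form in~\eqref{eq:pda_cherry_dtv}.

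For the asymptotics, I would write $D=\Delta(n)=\lfloor n/4\rfloor$, so that $D=n/4+O(1)$ and $n-2D-2=n/2+O(1)$, and apply Stirling's approximation $k!=\sqrt{2\pi k}\,(k/e)^k\,(1+O(1/k))$ to each of the seven factorials appearing in the closed form. I would organise the resulting product into three groups: (a) the $e^{-k}$ factors, whose total exponent is the same in the numerator and the denominator and hence cancel exactly; (b) the pure power factors $k^k$ together with $2^{\,n-2D-1}$, which after separating powers of $2$ from powers of $n$ collapse to the constant $\tfrac14$; and (c) the square-root prefactors $\sqrt{2\pi k}$, whose ratio is asymptotic to $\sqrt{2/(\pi D)}\sim 2\sqrt2/\sqrt{\pi n}$. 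Multiplying the contributions of (b) and (c) gives $\tfrac14\cdot 2\sqrt2/\sqrt{\pi n}=1/\sqrt{2\pi n}$, as required.

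The main obstacle is the Stirling bookkeeping, and specifically confirming that the leading constant is not disturbed by the floor in $\Delta(n)$. I would carry out the cancellations explicitly in the clean case $n=4m$, where the $e^{-k}$ factors and the $k^k$ together with the power-of-two factors reduce to an exact $\tfrac14$ and the square-root factors produce exactly $1/\sqrt{2\pi n}$, and then argue that for $n=4m+r$ with $r\in\{1,2,3\}$ every factorial argument is shifted only by $O(1)$; such shifts alter the square-root prefactors and the subexponential correction factors by a factor $1+O(1/n)$ but leave the leading constant unchanged. The delicate point is verifying that the several $O(1)$ additive offsets in the factorial arguments --- the $-2,-4,-3$ shifts built into the formula together with the residue $r$ --- feed only into the $o(1)$ term rather than into the constant $1/\sqrt{2\pi n}$; tracking these uniformly in $r$ is where the care is needed.
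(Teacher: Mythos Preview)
Your proof is correct and follows essentially the same route as the paper: identify the positive region via Lemma~\ref{pos}, telescope using~\eqref{eq:diff:rt:step} to get $\tfrac{2(\Delta(n)+1)}{2n-3}\pmf_n(\Delta(n)+1)$, substitute~\eqref{eq:pmf}, and then apply Stirling with a case split over $n\bmod 4$ for the asymptotics. One small overstatement: even in the case $n=4m$ the $k^k$ factors do \emph{not} collapse to an exact $\tfrac14$, because the built-in offsets $(n-2)!$, $(n-4)!$, $(2n-3)!$, $(n-2\Delta-2)!$, $(\Delta-1)!$ are still present; the paper handles this by first algebraically rewriting the expression as $\tfrac{(n!)^3\,2^{n-2\Delta-2}}{(2n)!\,(n-2\Delta)!\,(\Delta!)^2}(1+o(1))$ and then isolating the residual discrepancy into the quantity $g_n=\bigl(\tfrac{2n-4\Delta}{n}\bigr)^{n-2\Delta}\bigl(\tfrac{4\Delta}{n}\bigr)^{2\Delta+1}$, which is shown to be $1+o(1)$ along each residue class --- a slightly cleaner bookkeeping device than tracking all seven offsets through Stirling directly, but the substance is the same.
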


\begin{proof}
Write $x_+ = \max\{x, 0\}$. Noting that $\sum_{k = 1}^{\lfloor n/2 \rfloor} \left(\pmf^*_n (k) - \pmf_n (k) \right)=0$, 
we have
\begin{eqnarray}
 d^u_{TV}(\ch^*_n, \ch_n) &=& \frac{1}{2} \sum_{k = 1}^{\lfloor n/2 \rfloor} |\pmf^*_n (k) - \pmf_n (k)| = \sum_{k = 1}^{\lfloor n/2 \rfloor} \Big( \pmf^*_n (k) - \pmf_n (k) \Big{)}_+  \notag \\
&=& \sum_{k = 1}^{\Delta(n)} \Big( \pmf^*_n (k) - \pmf_n (k) \Big) \label{eq:pda_cherry_dtv:pf:fst} \\
&=& \frac{2}{2n - 3} \sum_{k = 1}^{\Delta(n)} \Big( (k + 1) \pmf_n (k + 1) - k \pmf_n (k) \Big) \label{eq:pda_cherry_dtv:pf:snd} \\
&=& \frac{2 \, (\Delta(n) + 1) \, \pmf_n (\Delta(n) + 1)}{2n - 3}, \notag
 \end{eqnarray}
\twu{where the equality in~\eqref{eq:pda_cherry_dtv:pf:fst} follows from Lemma~\ref{pos} and the  equality in~\eqref{eq:pda_cherry_dtv:pf:snd} follows from~\eqref{eq:diff:rt:step}.}
\twu{This establishes} the first equality in~(\ref{eq:pda_cherry_dtv}) 
after substituting the expression of $\pmf_n(\Delta(n)+1)$.

\rev{
To prove the second equality in~(\ref{eq:pda_cherry_dtv}),
we abbreviate $\Delta(n)=\lfloor n/4 \rfloor$ to $\Delta$ for simplicity. Note first that $\Delta=\frac{n}{4}(1+o(1))$. Furthermore, we have
\begin{equation}
\label{eq:delta:approx}
g_n:=\left(\frac{2n-4\Delta}{n}\right)^{n-2\Delta}\left(\frac{4\Delta}{n}\right)^{2\Delta+1}=1+o(1).
\end{equation}
To prove~(\ref{eq:delta:approx}), for $r\in \{0,1,2,3\}$ let $\{g^r_{m}\}_{m\ge 1}$  be the subsequence of $\{g_n\}_{n\ge 1}$ consisting of entries $g_n$ with $n=4m+r$ for some $m\ge 1$. Note that each item in the subsequence $g^0_{m}$ is 1. Furthermore, for $r=1$ we have
$$
\lim_{m\to \infty}g^1_{m}=\lim_{m\to \infty}\left(\frac{4m+2}{4m+1} \right)^{2m+1}\left(\frac{4m}{4m+1} \right)^{2m+1}
=\lim_{m\to \infty}\left(1-\frac{1}{(4m+1)^2} \right)^{2m+1}=1.
$$
Using a similar argument, we have $g^r_{n}=1+o(1)$ for $r\in \{2,3\}$. Therefore we can conclude that $g_n=1+o(1)$.
}

\rev{
Now we have
\begin{eqnarray}
	\frac{2 \, (\Delta + 1) \, \pmf_n (\Delta + 1)}{2n - 3} &=& \frac{n! \, (n - 2)! \, (n - 4)! \, 2^{n - 2\Delta - 1}}{(2n - 3)! \, (n - 2\Delta - 2)! \, \Delta! \, (\Delta - 1)!} \notag \\
    &=& \frac{(n!)^3 \, 2^{n - 2\Delta - 1 } }{ (2n)! \, (n - 2\Delta)! \, (\Delta!)^2} \times \frac{2n(2n-1)(2n-2)(n-2\Delta)(n-2\Delta-1)\Delta}{n(n-1)n(n-1)(n-2)(n-3)} \notag \\
	&=& \frac{(n!)^3 \, 2^{n - 2\Delta - 1 } }{ (2n)! \, (n - 2\Delta)! \, (\Delta!)^2} \times \frac{4(2n-1)(n-2\Delta)(n-2\Delta-1)\Delta}{n(n-1)(n-2)(n-3)} \notag \\
	&=& \frac{(n!)^3 \, 2^{n - 2\Delta - 2 } }{ (2n)! \, (n - 2\Delta)! \, (\Delta!)^2} \Big( 1 + o(1) \Big), 
	 \label{eq:pda_cherry_dtv:middle}
\end{eqnarray}
\twu{where the last equality follows from $\Delta=\frac{n}{4}(1+o(1))$.} Using Stirling's formula:
$
n!=\sqrt{2\pi n} (n/e)^n(1+o(1)),
$
~\citep[see, e.g.][Chapter 6]{abramowitz1964handbook}, the expression in~\eqref{eq:pda_cherry_dtv:middle} can be further simplified as
\begin{eqnarray*}
\frac{(n!)^3 \, 2^{n - 2\Delta - 2 } }{ (2n)! \, (n - 2\Delta)! \, (\Delta!)^2} \Big( 1 + o(1) \Big)
        &=& \frac{n^{n+1} (1+o(1))}{\sqrt{\pi(n-2\Delta)}\,2^{n+2\Delta+3} (n-2\Delta)^{n-2\Delta} \Delta^{2\Delta+1} } \\
                &=&\frac{1+o(1)}{g_n \sqrt{\pi(n-2\Delta)}} \\
    &=& \frac{1}{\sqrt{2 \pi n}} \Big( 1 + o(1) \Big).
\end{eqnarray*}
Here 
 the last equality follows from~(\ref{eq:delta:approx}).
This completes the proof of the second equality in~(\ref{eq:pda_cherry_dtv}).
}
\epf
\end{proof}


To study the total variation distance between the cherry distributions under \rev{the YHK} model, we need some further observations. Note that for $n\ge 4$, we have
 \begin{equation}
 \label{eq:k:1}
\ypmf^*_n(1)=\frac{2^{n-2}}{(n-1)!}>\ypmf_n(1)=0
 \end{equation} 
because there \rev{exist} rooted trees in $\rtsp_n$ with one cherry, \twu{whereas} each tree in $\tsp_n$ has at least two cherries.
   On the other hand, putting $m=\lfloor n/2 \rfloor$, then we have $\sum_{k=1}^m\ypmf^*_n(k)=\sum_{k=1}^m\ypmf_n(k)=1$. Therefore, we have the following result showing that there exists a sign change between the point-wise difference between these two functions.

\begin{lemma} \label{lem:yhk_cherry_dtv_k0}
For each $n \ge 4$, there exists an integer $1<k_0=k_0(n)\le n/2$ with
\begin{equation*}
   \left (\ypmf^*_n(k_0)-\ypmf_n(k_0) \right) \times  \left (\ypmf^*_n(k_0-1)-\ypmf_n(k_0-1) \right)< 0.
\end{equation*}
\end{lemma}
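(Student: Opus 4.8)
The plan is to read the conclusion off the two facts highlighted immediately before the statement, and then to work hard only on the strictness. Write $m=\lfloor n/2\rfloor$ and set $D_n(k):=\ypmf^*_n(k)-\ypmf_n(k)$, noting that $\ypmf^*_n$ is supported on $\{1,\dots,m\}$ and $\ypmf_n$ on $\{2,\dots,m\}$, so $D_n$ vanishes outside $[1,m]$ and the whole question lives on this finite range. The two ingredients are $D_n(1)=\ypmf^*_n(1)>0$ by~\eqref{eq:k:1}, and $\sum_{k=1}^{m}D_n(k)=0$ because both $\ypmf^*_n$ and $\ypmf_n$ are probability mass functions.

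From $\sum_{k=1}^m D_n(k)=0$ together with $D_n(1)>0$ we get $\sum_{k=2}^m D_n(k)=-D_n(1)<0$, so $D_n(k)<0$ for at least one index $k\ge 2$. I would then let $k_0$ be the \emph{smallest} such index; it satisfies $1<k_0\le n/2$, $D_n(k_0)<0$, and, by minimality, $D_n(k)\ge 0$ for every $1\le k<k_0$. In particular $D_n(k_0-1)\ge 0$, which already gives $D_n(k_0)\,D_n(k_0-1)\le 0$ and hence the claimed index up to the strictness of the inequality.

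The hard part is upgrading this to a strict inequality, that is, excluding $D_n(k_0-1)=0$. A priori $D_n$ could pass from a positive value through one or more zeros before turning negative (a ``zero sandwich''), in which case no two consecutive differences are strictly opposite in sign and the soft counting argument is not enough. To rule this out I would use that, by Theorem~\ref{thm:yule_cherry_probability_distribution} and the remark that the rooted and unrooted cherry recursions share the same coefficients, the difference $D_n$ itself satisfies $n\,D_{n+1}(k)=2k\,D_n(k)+(n-2k+2)\,D_n(k-1)$, whose coefficients are nonnegative on the relevant range; an induction on $n$ (base case small and direct) should show that this variation-diminishing averaging prevents $D_n$ from being zero immediately before its first strictly negative entry. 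An alternative route, which I expect to be cleaner, is to prove that the ratio $\ypmf^*_n(k)/\ypmf_n(k)$ is strictly monotone in $k$ on the common support $2\le k\le m$, in analogy with the monotonicity of $\ypmf_n/\pmf_n$ established in Theorem~\ref{thm:change:point}; strict monotonicity forces exactly one sign change with no interior zero, so $D_n(k_0-1)>0$ follows at once. Either way, this strictness step, rather than the counting argument, is the real obstacle, and I would devote the bulk of the proof to it.
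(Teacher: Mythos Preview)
Your counting argument in the first two paragraphs is exactly the paper's proof: the paper simply presents the two facts $\ypmf^*_n(1)>0=\ypmf_n(1)$ and $\sum_k\ypmf^*_n(k)=\sum_k\ypmf_n(k)=1$ and declares the lemma as a consequence, with no further detail. In particular, the paper does not single out the strictness issue you raise in your third paragraph; it treats the sign change as immediate from those two observations.

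Your scrutiny of the strictness is therefore \emph{more} than the paper provides. You are right that the soft argument alone only yields $D_n(k_0)D_n(k_0-1)\le 0$, and that a hypothetical pattern such as $+,0,-$ would defeat the conclusion; the paper does not address this. Of the two routes you sketch, the recursion-based induction is closest in spirit to the tools already in the paper (and indeed the identical-coefficients remark you cite is correct), while the monotone-ratio route would parallel Theorem~\ref{thm:change:point}. Either is plausible, but note that neither is actually carried out in your proposal, so as written your argument is no more complete than the paper's on this point---you have just been more honest about where the gap lies.
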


\begin{figure}
	\centering

    	\includegraphics[width=0.6\linewidth]{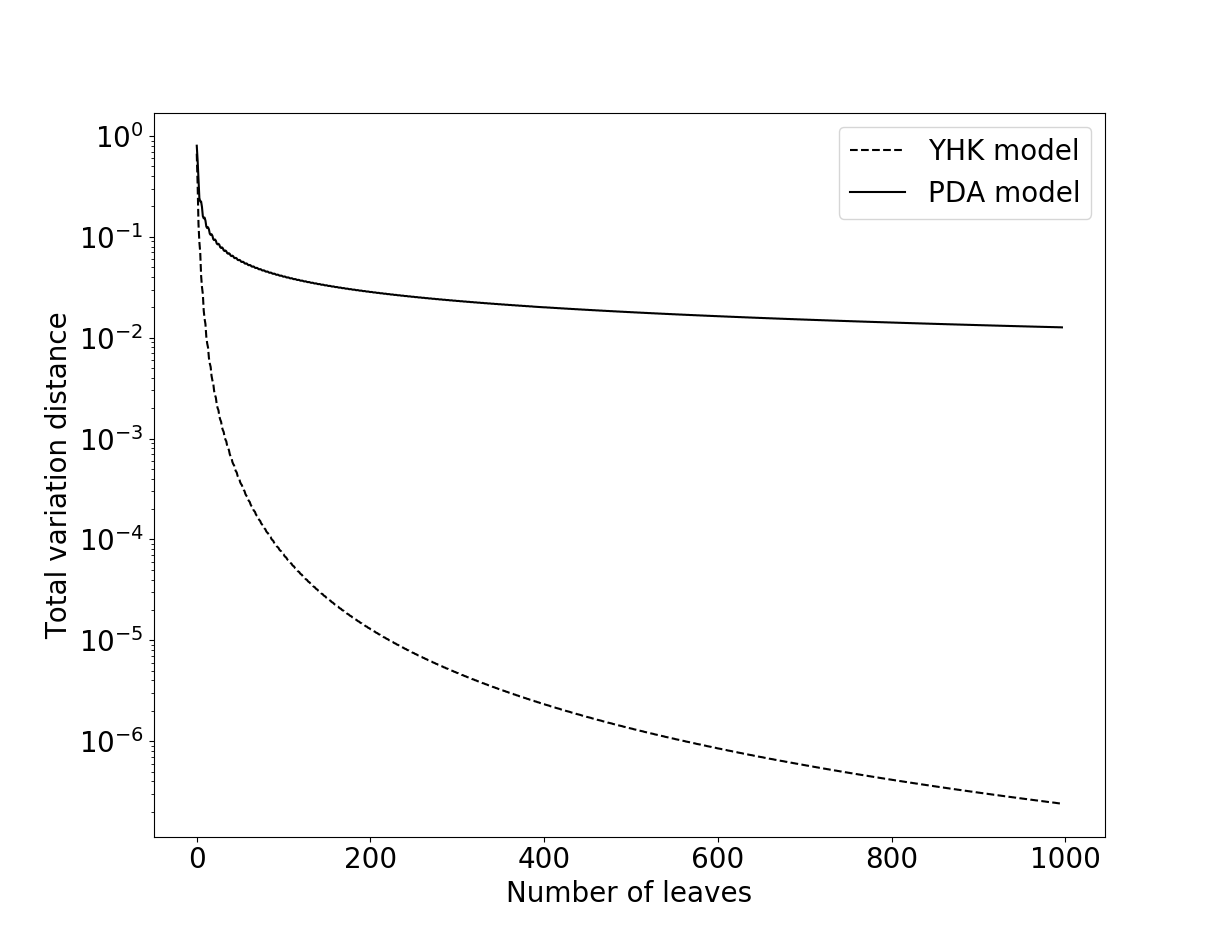}

    \caption{Total variation distances between the cherry distributions of rooted and unrooted trees ($4 \leq n \leq 1000$) under the YHK model (dashed line) and the PDA model (solid line).}
    \label{fig:cherry_dtv}
\end{figure}

\rev{With Lemma~\ref{lem:yhk_cherry_dtv_k0}, we are in a position to prove the following result.}


\begin{proposition}
\label{prop:yule:tv}
The sequence of the total variation distances $\{d^y_{TV} (\ch^*_n, \ch_n)\}_{n\ge 4}$ between the cherry distributions under the YHK model is a strictly decreasing sequence in $n$.
\end{proposition}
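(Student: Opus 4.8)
The plan is to exploit the fact that, under the YHK model, the rooted and unrooted cherry PMFs obey the \emph{same} one-step recursion and differ only in their initial conditions (the coefficients agree with the rooted case, as noted after Theorem~\ref{thm:yhk_pitchfork_probability}). Consequently the signed difference between the two PMFs is transported by an operator with nonnegative coefficients, and the total variation distance should behave like the $\ell^1$-norm of a quantity that this operator contracts. Concretely, I would set $D_n(k)=\ypmf^*_n(k)-\ypmf_n(k)$ and apply~\eqref{eq:yhk_cherry_probability} to both $\ypmf^*_n$ and $\ypmf_n$; by linearity this gives
\[
D_{n+1}(k) = \frac{2k}{n}\,D_n(k) + \frac{n-2k+2}{n}\,D_n(k-1)
\]
for $1\le k \le \lfloor (n+1)/2\rfloor$, a range in which both coefficients are strictly positive. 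Since $2\,d^y_{TV}(\ch^*_n,\ch_n)=\sum_k |D_n(k)|$, the whole statement reduces to showing that this operator strictly decreases $\sum_k|D_n(k)|$.

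The first step is the contraction estimate. Applying the triangle inequality term by term gives $|D_{n+1}(k)| \le \frac{2k}{n}|D_n(k)| + \frac{n-2k+2}{n}|D_n(k-1)|$; I would sum over $k$, reindex the second piece by $j=k-1$, and use that $D_n(0)=0$ together with the vanishing of out-of-support terms. The coefficient of each $|D_n(k)|$ then collapses to $\tfrac{2k}{n}+\tfrac{n-2k}{n}=1$, yielding $\sum_k|D_{n+1}(k)|\le\sum_k|D_n(k)|$. This already shows the sequence is non-increasing; it is essentially a data-processing/contraction phenomenon arising from the shared transition kernel.

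The crux is upgrading this to a \emph{strict} inequality, and this is precisely where Lemma~\ref{lem:yhk_cherry_dtv_k0} enters. That lemma produces an index $k_0$ with $1<k_0\le n/2$ at which $D_n(k_0)$ and $D_n(k_0-1)$ have strictly opposite signs (and are both nonzero). At $k=k_0$ the two coefficients $\tfrac{2k_0}{n}$ and $\tfrac{n-2k_0+2}{n}$ are both strictly positive — the second because $k_0\le n/2$ forces $n-2k_0+2\ge 2$ — so the triangle inequality at this single summand is strict. Combining the strict bound at $k_0$ with the non-strict bounds elsewhere gives $\sum_k|D_{n+1}(k)|<\sum_k|D_n(k)|$, i.e. $d^y_{TV}(\ch^*_{n+1},\ch_{n+1})<d^y_{TV}(\ch^*_n,\ch_n)$, which is the desired strict monotonicity for all $n\ge 4$.

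I expect the main obstacle to be bookkeeping rather than conceptual difficulty: one must confirm that the $D_n$-recursion is valid throughout the summation range, including the boundary $k=1$ where rooted and unrooted behaviours differ (here $\ypmf_n(1)=0$ while $\ypmf^*_n(1)>0$ by~\eqref{eq:k:1}), and that the reindexing produces no spurious boundary contributions so that the coefficients telescope cleanly to $1$. The key compatibility check is that the $k_0$ supplied by Lemma~\ref{lem:yhk_cherry_dtv_k0} always lands in the range $2\le k_0\le n/2$ where the recursion applies and both coefficients are positive, which is exactly what makes the sign change translate into a strict loss of $\ell^1$-mass.
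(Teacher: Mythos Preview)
Your proposal is correct and follows essentially the same approach as the paper: both exploit that $\ypmf^*_n$ and $\ypmf_n$ satisfy the identical recursion~\eqref{eq:yhk_cherry_probability}, apply the triangle inequality to the resulting recursion for $D_n=\ypmf^*_n-\ypmf_n$, reindex so the coefficients telescope to~$1$, and then invoke Lemma~\ref{lem:yhk_cherry_dtv_k0} to make the triangle inequality strict at the sign-change index $k_0$. The boundary bookkeeping you flag (the cases $k=1$ and $k=m=\lfloor (n+1)/2\rfloor$, handled via $D_n(0)=0$, $\alpha_n(m)=1$ when $n$ is even, and $\ypmf^*_n(m)=\ypmf_n(m)=0$ when $n$ is odd) is exactly what the paper checks in its final lines.
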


\begin{proof}
\twu{For $1\le k \le 2n$, let} $\alpha_n(k)=2k/n$ and $\beta_n(k)=(n-2k+2)/n$.
\rev{By~(\ref{eq:yhk_cherry_probability}) and \citet[Eq.~(11)]{WuChoi16}} we have
\begin{eqnarray*}
    \ypmf^*_{n+1}(k) = \alpha_n(k)  \ypmf^*_{n}(k) + \beta_n(k) \ypmf^*_{n}(k-1)~~\mbox{and}~~
     \ypmf_{n+1}(k) = \alpha_n(k)  \ypmf_{n}(k) + \beta_n(k) \ypmf_{n}(k-1), 
\end{eqnarray*}
\twu{for $n\ge 4$ and $1\le k \le n/2$, where the case $k=1$ follows from~\eqref{eq:k:1} and the fact that $\ypmf_n(0)=\ypmf_n(1)=\ypmf^*_n(0)=0$.}

For simplicity, set $m=\big \lfloor \frac{n+1}{2} \big \rfloor$. Then we have
$\ypmf^*_{n+1}(k)=\ypmf_{n+1}(k)=0$ for $k>m$. Therefore we have
{\small
\begin{equation}
\label{eq:tv:step}
2d^y_{TV} (\ch^*_{n+1}, \ch_{n+1})=\sum_{k=1}^m |\ypmf^*_{n+1}(k)-\ypmf_{n+1}(k)|
=\sum_{k=1}^m \left| \alpha_n(k)  \left(\ypmf^*_{n}(k)-\ypmf_{n}(k)\right) + \beta_n(k) \left(\ypmf^*_{n}(k-1)- \ypmf_{n}(k-1) \right) \right|.
\end{equation}
}

By Lemma~\ref{lem:yhk_cherry_dtv_k0}, let $k_0$ be a constant so that  $\left (\ypmf^*_n(k_0)-\ypmf_n(k_0) \right) \times \left (\ypmf^*_n(k_0-1)-\ypmf_n(k_0-1) \right)< 0$ holds.
By the triangle inequality, using~(\ref{eq:tv:step}) it follows that
\begin{eqnarray*}
2d^y_{TV} (\ch^*_{n+1}, \ch_{n+1}) &<&
\sum_{k\not = k_0} \left| \alpha_n(k)  \left(\ypmf^*_{n}(k)-\ypmf_{n}(k)\right) \right| +
\sum_{k\not = k_0} \left| \beta_n(k) \left(\ypmf^*_{n}(k-1)- \ypmf_{n}(k-1) \right) \right| \\
&&\,\,+\alpha_n(k_0) |\left(\ypmf^*_{n}(k_0)-\ypmf_{n}(k_0)\right)|
+\left| \beta_n(k_0) \left(\ypmf^*_{n}(k_0-1)- \ypmf_{n}(k_0-1) \right) \right| \\
&=&\sum_{k=1}^m  \alpha_n(k)  \left| \left(\ypmf^*_{n}(k)-\ypmf_{n}(k)\right) \right| +
\sum_{k=1}^m  \beta_n(k) \left|\left(\ypmf^*_{n}(k-1)- \ypmf_{n}(k-1) \right) \right|  \\
&=&\sum_{k=1}^m  \alpha_n(k)  \left| \left(\ypmf^*_{n}(k)-\ypmf_{n}(k)\right) \right| +
\sum_{k=0}^{m-1}  \beta_n(k+1) \left|\left(\ypmf^*_{n}(k)- \ypmf_{n}(k) \right) \right| \\
&=&\sum_{k=1}^m  |\ypmf^*_{n}(k)-\ypmf_n(k)| \\
&=& 2d^y_{TV} (\ch^*_{n}, \ch_{n}),
\end{eqnarray*}
from which \rev{this} proposition holds. Note that the second last equality holds because $\ypmf^*_{n}(0)=\ypmf_n(0)=0$, $\alpha_n(m)=1$ when $n$ is even, and $\ypmf^*_{n}(m)=\ypmf_n(m)=0$ when $n$ is odd.
\epf
\end{proof}

\section{Discussion and Conclusion} \label{discussion}

Tree shape statistics play an important role in studying evolutionary signals in  phylogenetic trees, so it is helpful to understand how they are related between tree generating models for rooted and unrooted trees. In this paper, we present a comparison study on properties of statistical distributions for
cherries and pitchforks under the YHK and the PDA models. In addition to common patterns between rooted and unrooted \rev{trees} for both models, such as the log-concavity of the cherry distributions, we also observe some differences.  For instance, by Proposition~\ref{prop:mean:comp} we know that the difference between the mean number of cherries (resp. pitchforks) for unrooted trees and \twu{for} rooted trees converges to $0$ under the YHK model \rev{but} to $1/4$ under the PDA model.
 \rev{As a result, due caution is required for conducting statistical analysis for datasets containing both rooted and unrooted trees: when subtree statistics are computed from such a dataset, 
 bias could be introduced if we simply treat the rooted trees as unrooted ones by ignoring their roots. }

Several questions derived from the work presented here remain open.
For example, numerical computation (see, e.g. Fig.~\ref{fig:cherry_dtv}) suggests that the total variation distance $d^y_{TV}(\ch^*_n, \ch_n)$ is bounded above by $d^u_{TV}(\ch^*_n, \ch_n)$. If this can be established analytically, then by Theorem~\ref{thm:pda_cherry_dtv} and Proposition~\ref{prop:yule:tv} it follows that $d^y_{TV}(\ch^*_n, \ch_n)$ also converges to zero. Next, we conjecture that the pitchfork distributions for both rooted trees and unrooted trees are log-concave under the two null models. Note that log-concave and unimodal sequences arise naturally from problems in a variety of fields~\citep{stanley1989log}, including in phylogenetics~\citep{zhu2014clades,WuChoi16}.
\rev{Furthermore,  for rooted trees, previous studies have investigated  various properties of subtrees with four or more leaves, including mean, variance, and asymptotic distribution ~\citep[see, e.g.][]{rosenberg06a,chang2010limit,Janson2014}, but much less is known for unrooted trees.}

The work presented here also leads to some broad questions that may be interesting to explore in future work. First, the two models considered here can be regarded as two special cases for some more general tree generating models,  such as Ford's alpha model~\citep{chen2009new} and the Aldous $\beta$-splitting model~\citep{aldous96a}. Therefore it is of interest to extend our studies on subtree indices to these models as well.
\twu{Secondly, our results indicate that the problem of comparing distributions of shape statistics between rooted and unrooted trees is nontrivial.} 
Finally, one can also consider aspects of tree shapes that are related to the distribution of branch lengths~\citep{Ferretti2017,arbisser2018joint}, which will help us design more complex models that may in some cases provide a better fit to real data.

\begin{acknowledgements}
The authors would like to sincerely thank two anonymous referees for their insightful comments and constructive suggestions. 
\end{acknowledgements}

%
%

\newpage

 \newcommand{\noop}[1]{}

\end{document}